\newcommand{\ov}[1]{{\overline{#1}}}
\title{Stallings graphs for quasi-convex subgroups}
\author{
  Olga Kharlampovich\thanks{Partially supported by the PSC-CUNY award and NSF grant DMS-1201550.}\\
  \textit{Hunter College, CUNY}\\
  \texttt{okharlampovich@gmail.com}
  \and
  Alexei Miasnikov\thanks{Partially supported by NSF grants DMS-1318716 and  DMS-1201550.}\\
    \textit{Stevens Institute of Technology}\\
  \texttt{amiasnikov@gmail.com}
  \and
  Pascal Weil\thanks{Partially supported by the Programme IdEx Bordeaux - CPU (ANR-10-IDEX-03-02) and by the DeLTA project (ANR-16-CE40-0007).}\\
  \textit{LaBRI, CNRS and Universit\'e de Bordeaux}\\
  \texttt{pascal.weil@labri.fr}
}
\date{\today}
\newtheorem{theorem}{Theorem}[section]
\newtheorem{cor}[theorem]{Corollary}
\newtheorem{prop}[theorem]{Proposition}
\newtheorem{lemma}[theorem]{Lemma}
\newtheorem{fact}[theorem]{Fact}
\newtheorem{pro-example}[theorem]{Example}
\newenvironment{example}{\begin{pro-example}\rm}{\cqfd\end{pro-example}}
\newtheorem{pro-remark}[theorem]{Remark}
\newenvironment{remark}{\begin{pro-remark}\rm}{\cqfd\end{pro-remark}}
\newtheorem{pro-definition}[theorem]{Definition}
\newenvironment{definition}{\begin{pro-definition}\rm}{\cqfd\end{pro-definition}}
\def\CB{{\cal B}}
\def\CA{{\cal A}}
\def\LG{L_\geod}
\def\BP{\textsf{BP}}
\def\rBP{\textsf{rBP}}
\def\Cayley{\textsf{Cayley}}
\def\Rel{\textsf{Rel}}
\def\dist{\textsf{dist}}
\def\hauteur{\textsf{height}}
\def\Schreier{\textsf{Schreier}}
\def\WP{\textsf{WP}}
\def\calA{\mathcal{A}}
\def\calH{\mathcal{H}}
\def\calJ{\mathcal{J}}
\def\calL{\mathcal{L}}
\def\calP{\mathcal{P}}
\def\bbA{\mathbb{A}}
\def \N {\mathbb{N}}
\def\bbP{\mathbb{P}}
\def \Z {\mathbb{Z}}
\def\PSL{\mathsf{PSL}(2,\Z)}
\let\phi\varphi
\let\epsilon\varepsilon
\newcommand{\abs}[1]{\vert #1 \vert}
\newcommand{\inv}{^{-1}}
\newcommand{\geod}{\textsf{geod}}
\newcommand{\reduc}{\textsf{red}}
\newcommand{\transition}[1]{\buildrel#1\over\longrightarrow}
\def\preuve{\begin{proof}}
\def\eop{\end{proof}}
\def\cqfd{\skip10=\parfillskip\parfillskip=0pt
\enspace\hfill\symbolecqfd\par\parfillskip=\skip10\par\medskip}
\def\symbolecqfd{\rlap{$\sqcap$}$\sqcup$}
\def\proofof#1{\rm \trivlist \item[\hskip \labelsep{\it Proof of~#1.}]}
\def\eopo{\cqfd\endtrivlist}
\renewcommand{\O}{\mathcal{O}}
\begin{document}

\maketitle

\begin{abstract}
We show that one can define and effectively compute Stallings graphs for quasi-convex subgroups of automatic groups (\textit{e.g.} hyperbolic groups or right-angled Artin groups). These Stallings graphs are finite labeled graphs, which are canonically associated with the corresponding subgroups. We show that this notion of Stallings graphs allows a unified approach to many algorithmic problems: some which had already been solved like the generalized membership problem or the computation of a quasi-convexity constant (Kapovich, 1996); and others such as the computation of intersections, the conjugacy or the almost malnormality problems.

Our results extend earlier algorithmic results for the more restricted class of virtually free groups. We also extend our construction to relatively quasi-convex subgroups of relatively hyperbolic groups, under certain additional conditions.
\end{abstract}

\section{Introduction}

One can associate with any finitely generated subgroup $H$ of a free group $F$ its \emph{Stallings graph}, a uniquely defined, effectively computable, finite, directed, labeled graph with a designated base vertex, in which the reduced words labeling loops at the base vertex are exactly the elements of $H$. This remarkable combinatorial tool was introduced by Stallings \cite{1983:Stallings}, along with a simple algorithm to compute it given a tuple of generators of $H$ (subsequently improved by Touikan \cite{2006:Touikan}). Once we have Stallings graphs for the finitely generated subgroups $H, K \le F$, we can compute bases for them, compute their index, solve the membership and the conjugacy problems, find intersections of their conjugates and solve many other algorithmic problems, see \cite{2002:KapovichMyasnikov}. In short, Stallings graphs turn out to be extremely versatile tools to prove structural properties of subgroups of free groups, as well as to solve algorithmic problems.

The objective of this paper is to propose a generalization of that construction to a much more general context, which includes the quasi-convex subgroups of hyperbolic groups, and to exploit it to solve algorithmic problems.

First, let us make clear what we mean by a Stallings graph. Given a finitely generated group $G$ and a language $L$ of representatives of the elements of $G$, the \emph{Stallings graph} of a subgroup $H$ (\emph{with respect to $L$}) is the fragment $\Gamma_L(H)$ of the Schreier graph $\Schreier(G,H)$ spanned by the loops at vertex $H$, labeled by the $L$-representatives of the elements of $H$. It is a canonical object, associated with $H$ and $L$.

While this definition can always be considered, that graph may not be finite. It is the case exactly if $H$ is \emph{$L$-rational} (that is: the set of $L$-representatives of the elements of $H$ is a regular language), a notion introduced by Gersten and Short \cite{1991:GerstenShort}, who proved that it is equivalent to a geometric notion of \emph{$L$-quasi-convexity}. Such a subgroup is always finitely generated. Classical quasi-convexity corresponds to the case where $L$ is the language of geodesic representatives of the elements of $G$.

In order to prove computability results, we also assume that the group $G$ is automatic, and that the language of representatives induced by the automatic structure, is the language $L$ with respect to which we define $L$-quasi-convexity.  Recall that every hyperbolic group $G$ is geodesically automatic and that, in that case, a subgroup $H$ is quasi-convex if and only if it is undistorted, or quasi-isometrically embedded in $G$.

We observe that our definition of a Stallings graph was already considered by Gitik \cite[Definition 6]{1996:Gitik} under the name of `core', in the context of quasi-convex subgroups of hyperbolic groups. This leaves open the problem of computing the Stallings graph of a quasi-convex subgroup, when we are not given a constant of quasi-convexity for the subgroup. There exist software tools to enumerate cosets (one of the more notable is \textsf{KBMAG} \cite{2000:Holt}, see also \cite{1999:HoltHurt}), which allow exploring the Schreier graph of a subgroup: they provide a semi-algorithm that will stop only if the subgroup has finite index.

In Section~\ref{sec: effective computation}, we solve the problem of computing the Stallings graph of an $L$-quasi-convex subgroup. More precisely, we give a partial algorithm which, on input a tuple generating a subgroup $H$ of $G$, halts exactly if $H$ is $L$-quasi-convex, and in that case computes $\Gamma_L(H)$. This means in particular that if $G$ is locally quasi-convex (e.g. a surface group), then the algorithm halts on every input.

Kapovich \cite{1996:Kapovich} solved a very closely related problem: under the same hypotheses as us, he computes an automaton accepting the set of $L$-representatives of $H$ (also by a partial algorithm, halting exactly when the input tuple generates an $L$-quasi-convex subgroup). Kapovich uses it to solve the membership problem and to compute a constant of $L$-quasi-convexity for $H$.

The Stallings graph $\Gamma_L(H)$ concisely carries more information than the set of $L$-representatives of the elements of $H$, and our construction provides a unified approach to many algorithmic problems, including those solved by Kapovich. In Section~\ref{sec: algorithmic problems}, we show that if we have computed the Stallings graphs of an $L$-quasi-convex subgroup $H$, we can solve the membership problem in $H$, we can find a constant of $L$-quasi-convexity for $H$, and decide whether $H$ is finite; and if we have the Stallings graphs of $L$-quasi-convex subgroups $H$ and $K$, we can compute the intersection of these subgroups.

In Section~\ref{sec: BICI}, we identify a property of the automatic structure of $G$, a quantitative version of Hruska's and Wise's bounded packing property \cite{2009:HruskaWise}, which implies that, given the Stallings graphs of $L$-quasi-convex subgroups $H$ and $K$, one can decide whether they are conjugated, one can compute a finite family $\calJ$ of subgroups of $K$ such that every intersection of the form $K \cap H^g$ ($g\in G$) is conjugated in $K$ to an element of $\calJ$, one can compute the height of $H$ (see \cite{1998:GitikMitraRips}) and one can decide whether $H$ is almost malnormal (every intersection of $H$ with a conjugate $H^g$, $g\not\in H$, is finite). We show in Section~\ref{sec: conjugacy in hyperbolic} that the geodesic automatic structure of a hyperbolic group satisfies this bounded packing property, so the properties listed above are computable or decidable for the quasi-convex subgroups of hyperbolic groups.

In the last section of the paper, Section~\ref{sec: relatively hyperbolic}, we discuss the application of our results and methods to relatively quasi-convex subgroups of relatively hyperbolic groups. Let $G$ be a relatively hyperbolic group, relative to the peripheral structure $\calP$ and let $H$ be a relatively quasi-convex subgroup of $G$. We first show that if $H$ is peripherally finitely generated (that is: every subgroup of the form $H \cap P^g$, $g\in G$ and $P\in\calP$, is finitely generated), then $H$ is finitely generated.

Then we show that if the peripheral structure of the relatively hyperbolic group $G$ consists of geodesically bi-automatic groups and if $H$ has peripherally finite index (that is: every subgroup of the form $H^g \cap P$, for $g\in G$ and $P\in \calP$, either is finite or has finite index in $P$), then one can compute a Stallings graph and a constant of relative quasi-convexity for $H$. Like before, this is a partial algorithm which, on input a tuple of elements of $G$, halts on a set of inputs which includes the tuples generating a relatively quasi-convex subgroup with peripheral finite index. Based on this algorithm, one can decide the membership problem and the finiteness problem for $H$, and one can compute its intersection with another relatively quasi-convex subgroup. Under the same hypothesis, we show that we can compute the relative height of $H$, and decide whether $H$ is relatively malnormal. If $H, K \le G$ satisfy these same hypothesis, we can compute a finite family of representatives (up to conjugacy in $K$) of the intersections of the form $K \cap H^g$ ($g\in G$) which are so-called hyperbolic (also known as loxodromic) subgroups of $G$. The latter result can be extended to all intersections of the form $K \cap H^g$, if $G$ is toral relatively hyperbolic, leading to a decision procedure for the conjugacy problem (for relatively quasi-convex subgroups with peripherally finite index).

Comparable results were obtained earlier for significantly smaller classes of groups. McCammond and Wise solve a number of algorithmic results, including the construction of Stallings-like graphs and the solution of the membership problem, in large classes of presentations (characterized by a so-called \emph{perimeter reduction} hypothesis), many of which turn out to be locally quasi-convex \cite{2005:McCammondWise}. Schupp \cite{2003:Schupp} applies these results to large classes of Coxeter groups. In both \cite{2005:McCammondWise} and \cite{2003:Schupp}, the construction of Stallings-like graphs can be performed efficiently, in polynomial time only.

Markus-Epstein \cite{2007:Markus-Epstein} efficiently computes Stallings graphs for the finitely geneerated subgroups of amalgamated products of finite groups and uses them to solve the conjugacy and the finite index problems for these subgroups. Silva, Soler-Escriva, Ventura \cite{2016:SilvaSoler-EscrivaVentura} do the same for the finitely generated subgroups of virtually free groups. In both cases, the groups considered are locally quasi-convex.

Delgado and Ventura \cite{2013:DelgadoVentura} offered a different generalization, outside the class of hyperbolic groups, enriching the labeled graph structure to represent subgroups of direct products of free and free abelian groups: these groups do not satisfy Howson's property and Delgado and Ventura give a decision procedure to decide whether the intersection of two finitely generated subgroups is again finitely generated, and to compute it in that case.

Extending beyond automatic groups, Kapovich, Miasnikov, Weidmann \cite{2005:KapovichWeidmannMyasnikov} solve the membership problem, and more generally give an algorithmic version of the Bass-Serre theory for finitely generated subgroups of the fundamental groups of graphs of groups, under natural algebraic assumptions on the vertex groups. They do not really construct an analogue of our Stallings graphs, but rather compute the induced graph of groups for the given subgroup. 

In yet another direction, Arzhantseva proves that, generically, a finitely presented group satisfies the Howson property \cite{1998:Arzhantseva}. This result is obtained by showing that, for each integer $\ell$, there exists a generic class of presentations in which every $\ell$-generated subgroup is quasi-convex. She uses the same method to show that, generically, a subgroup of infinite index is free \cite{2000:Arzhantseva}. Kapovich and Schupp use the same idea to show that, for a generic class of 1-relator groups, the isomorphism problem is decidable \cite{2005:KapovichSchupp}.

The paper is organized as follows. Sections~\ref{sec: fp groups} and \ref{sec: Stallings graphs} introduce the necessary notions on finitely presented groups and on quasi-convexity, as well as our definition of the Stallings graph of a subgroup. Section~\ref{sec: effective computation} describes our partial algorithm to compute the Stallings graph of an $L$-quasi-convex subgroup $H$ of an automatic group, given a tuple of generators for $H$. The complexity of this algorithm is discussed in Section~\ref{sec: complexity}. Algorithmic applications are presented in Section~\ref{sec: algorithmic problems}, and the extension of our techniques to relatively hyperbolic groups is discussed in Section~\ref{sec: relatively hyperbolic}.

\section{Finitely presented groups: automatic, hyperbolic and otherwise}\label{sec: fp groups}

Let $A = \{a_1, \ldots,a_r\}$ ($r\ge 2$) be a finite alphabet (\textit{i.e.} set) and let $\tilde A$ be the alphabet $\tilde A=\{ a_1, \ldots , a_r, a_1^{-1}, \ldots, a_r^{-1} \}$, formed with the elements of $A$ and their formal inverses. We denote by $\tilde A^*$ the free monoid with basis $\tilde A$, that is, the set of all words over the alphabet $\tilde A$ with the binary operation of concatenation. By $1$ we denote the empty word in $\tilde A^*$, as well as the identity element in a monoid (in particular, a group). 

The map $^{-1}:\tilde A \to \tilde A$ defined by $a \mapsto a^{-1}, a^{-1} \mapsto a$, where $a \in A$,   extends to $\tilde A^* \to \tilde A^*$ in the usual way, so $w \mapsto w^{-1}$. A \emph{free group cancellation} is the operation of deleting, in a word of $\tilde A^*$, a factor of the form $aa\inv$ or $a\inv a$, where $a\in  A$. The \emph{free group reduction} of a word $w$ is the word $\reduc(w)$ obtained from $w$ by performing repeatedly all possible free group cancellations. The word $\reduc(w)$ is uniquely defined, i.e., it does not depend on how one performs free cancellations. A word $w \in \tilde A^*$ is called  {\em reduced} if $w = \reduc(w)$. The free group with basis $A$, written $F(A)$ can be viewed as  the set of all reduced words in $\tilde A^*$ with the standard multiplication, namely $w \cdot w' = \reduc(ww')$.

Throughout this paper, we fix an alphabet $A$, a group $G$ and an epimorphism $\mu\colon \tilde A^* \to G$ which respects inverses (a \emph{choice of generators} for $G$). We also denote by $\mu$ the restriction of $\mu$ to $F(A) \subset \tilde A^*$ and write $\ov w$ for $\mu(w)$.

The \emph{word problem} of $G$, relative to the given choice of generators, is the set $\WP_A(G) $ of words $w\in \tilde A^*$ such that $\overline w = 1$ (we also write $\WP(G)$ if $A$ is understood). The word problem is said to be \emph{solvable}  or \emph{decidable} if we can decide whether or not a given word $w\in \tilde A^*$  is in $\WP(G)$.

If $w\in \tilde A^*$ and $g\in G$, we denote by $|w|$ the length of the word $w$, and by $|g|$ the length of a shortest word in $\tilde A^*$ whose $\mu$-image is $g$. A word $w$ is called \emph{geodesic} if $|w| = |\ov w|$, and we let $\LG$ be the set of all geodesic words.

The Cayley graph of $G$ with respect to $A$ (and to $\mu$) is denoted by $\Cayley_A(G)$, or simply $\Cayley(G)$ if $A$ is understood. Its vertex set is $G$, and it has an $a$-labeled edge from $\ov w$ to $\overline{wa}$ for every $w\in \tilde A^*$ and $a\in A$.

The choice of $\mu$ determines a metric $d$ on $G$, and on $\Cayley_A(G)$, defined by $d(g,h) = \abs{g\inv h}$.

\subsection{Rational structures for finitely generated groups}\label{sec: rational structure}

A \emph{rational language} $L$ over $\tilde A$ is a subset $L  \subseteq \tilde A^*$ which is accepted by a finite state automaton. The pair $(G,L)$ is a \emph{rational structure of $G$ relative to $A$} if $L \subseteq \tilde A^*$ is a rational language consisting only of reduced words, and such that $\mu(L) = G$. If $g \in G$, an element of $L$ in $\mu\inv(g)$ is called an \emph{$L$-representative} of $g$.

Given a rational structure $(G,L)$ on $G$, a subset $S \subseteq G$ is called $L$-\emph{rational} if the set $L \cap \mu^{-1}(S)$ is a rational subset of $\tilde A^*$.

The study of rational structures of groups was initiated by Gilman \cite{1979:Gilman,1987:Gilman} and was one of the ingredients that gave rise to the theory of automatic groups \cite{1992:EpsteinCannonHolt}.

\subsection{Automatic groups}\label{sec: automatic groups}

An \emph{automatic structure} for the $A$-generated group $G$ consists of the following collection of finite state automata:

- an automaton $\calA_{acc}$ (the \emph{word acceptor}) accepting a set $L$ of representatives of the elements of $G$ (that is: $\mu(L) = G$);

- for each letter $a\in \tilde A \cup \{1\}$, an automaton $\calA_a$ (the \emph{$a$-multiplyer}) accepting the pairs $(u,v)$ such that $u,v\in L$ and $\overline{ua} = \overline v$.

When we say that an automaton accepts a set $K \subseteq \tilde A^*\times \tilde A^*$, we actually mean the following: the automaton operates on alphabet $(\tilde A\cup\{\square\})^2$ (where $\square$ is a new symbol) and it accept the words on that alphabet of the form $(u\square^n,v\square^m)$, where $u,v\in \tilde A^*$, $|u|+n = |v|+m$, $\min(n,m) = 0$ and $(u,v) \in K$.

If the set of representatives is the set $\LG$ of geodesics, we talk of a \emph{geodesically automatic} structure.

If $G$ admits an automatic structure, then $G$ is called an \emph{automatic group}. We refer to the book \cite{1992:EpsteinCannonHolt} for the standard facts about automatic groups. Hyperbolic groups and right-angled Artin groups are geodesically automatic groups.

It is no restriction to request that the set $L$ of representatives of the elements of $G$ consists only of reduced words, and we will always assume that this is the case. As a result, an automatic structure for $G$ yields a rational structure $(G,L)$ in the sense of Section~\ref{sec: rational structure}.

\subsection{Group presentations and the Dehn algorithm}

Let $G = \langle A \mid R\rangle$ be a finite  presentation of the group $G$, with $R$ a set of cyclically reduced words in $\tilde A^*$. We assume  that this presentation is symmetric, i.e., $R^{-1} = R$, and closed under cyclic permutations, i.e., if $r \in R$ and $r = r_1r_2$ then $r_2r_1 \in R$. The \emph{Dehn rewriting system associated to $R$} is a finite string rewrite system $D_R$, defined as follows:
$$D_R = \{ r_1 \to r_2^{-1} \mid r_1r_2 \in R , |r_1| > |r_2|\} \cup \{aa^{-1} \to 1 \mid a \in \tilde A\}.$$
A word $w\in \tilde A^*$ is called \emph{Dehn-reduced} if no rule of $D_R$ can be applied to it, that is, if $w$ contains no occurrence of a left side of a rule of $D_R$. The rewriting system $D_R$ is length-reducing by construction, and hence terminating. This means that, given a word $w\in \tilde A^*$, one can apply a finite (possibly empty) sequence of rules of $D_R$ to yield a Dehn-reduced word (a Dehn reduced form of $w$). Obviously, this  can be done in polynomial time in the length $|w|$ of $w$. Note that, in general, applying different sequences of rules of $D_R$ to the word $w$ may lead to different Dehn-reduced forms of $w$  (that is: the rewriting system $D_R$ may not be \emph{confluent}). Observe that $w$ and all its Dehn-reduced forms represent the same element in $G$.

We denote by $L_R \subseteq F(A)$ the language of Dehn-reduced words relative to $D_R$. Then
$L_R$ is the set of words in $\tilde A^*$ avoiding a finite set of factors: the length two words of the form $aa\inv$ and $a\inv a$ with $a\in A$ and the words $r_1$ which are \emph{long} prefixes of a relator $r\in R$, that is, which satisfy $|r_1| > \frac12|r|$. It is a standard result of automata theory that such a language is rational. Furthermore, one can effectively construct  an automaton accepting $L_R$. Moreover $(G,L_R)$ is a rational structure for $G$.

The \emph{Dehn algorithm} --- an  attempt to solve the word problem of $G$ --- works as follows: on a given input word $w\in \tilde A^*$, the algorithm computes a Dehn-reduced form of $w$, say $\rho(w)$, by applying the rules from $D_R$ in some fixed order. The algorithm then returns the answer \textsf{Yes} if $\rho(w)$ is the empty word, and the answer \textsf{No} otherwise.

Observe that this algorithm is always correct when it returns a positive answer on an input word $w$, since in this case, indeed,  $w\in \WP(G)$. More generally, it is always the case that $\overline w = \overline{\rho(w)}$. However there are groups and presentations for which the Dehn algorithm makes mistakes on negative answers (that is: it returns the answer \textsf{No} even though $\ov w = 1$).

We say that the presentation $G = \langle A \mid R \rangle$ is a \emph{Dehn presentation} if the Dehn algorithm correctly solves the word problem.

\begin{remark}
A geodesic word is always Dehn reduced, so $\LG \subseteq L_R$. The converse is not true in general and some Dehn reduced words may not be geodesic, even if the presentation is Dehn. On this, see \cite{2000:Arzhantseva-dehn}.
\end{remark}

\subsection{Hyperbolic groups}
\label{se:intro-hyp}

Let $\delta \ge 0$. The group $G = \langle A \mid R \rangle$ is called \emph{$\delta$-hyperbolic} if, for all $g,h,k\in G$ and for all geodesic paths $p(g,h)$, $p(g,k)$ and $p(h,k)$ in $\Cayley(G)$ between these points, every vertex of $p(g,h)$ sits within distance at most $\delta$ from the union of $p(g,k)$ and $p(h,k)$. The group $G$ is called \emph{hyperbolic} if it is $\delta$-hyperbolic for some $\delta$. This geometric property of $\Cayley(G)$ has far-reaching consequences on the  algorithmic properties of $G$. We refer to \cite{1987:Gromov,1992:EpsteinCannonHolt,1999:BridsonHaefliger} for further details, and we report here only the facts that will be used directly in this paper:

-  a finitely generated group  is hyperbolic if and only if it admits a Dehn presentation \cite{1987:Gromov,1989:Lysenok};

- if $G$ is hyperbolic, then a hyperbolicity constant and a Dehn presentation  can be  computed \cite{1996:Papasoglu,2001:EpsteinHolt}.

-  a hyperbolic group $G$ admits a shortlex automatic structure, and such an automatic structure can be  computed  \cite{1992:EpsteinCannonHolt}.

\begin{remark}
Finding a hyperbolicity constant is one of the most fundamental algorithmic tasks for hyperbolic groups. There are partial algorithms (see \cite{1996:Papasoglu,2001:EpsteinHolt}) that start on a given  finite presentation and  then  stop if and only if the group defined by the presentation is hyperbolic (and in this case the algorithm spells out a hyperbolicity constant of the Cayley graph of the group). Notice, that the problem to decide whether a finite presentation defines a hyperbolic group is undecidable (since hyperbolicity of a finite presentation is a Markov property), so the time function of any partial algorithm as above cannot be bounded by a computable function. This implies that in the worst case all  algorithms that require computation of hyperbolicity constants of  hyperbolic groups are very inefficient.  In this paper, whenever a hyperbolic group $G$ is given, we consider that a hyperbolicity constant, a Dehn presentation and a shortlex automatic structure for $G$ are already computed. 
\end{remark}

\section{Stallings graphs}\label{sec: Stallings graphs}

Let $G$ be a group with a rational structure $(G,L)$. In this section, we introduce a notion of a Stallings graph associated with a subgroup $H$ of a group $G$ relative to $L$. This graph may be infinite in general. We show that it is finite if and only if $H$ is $L$-quasi-convex in $G$ (see Section~\ref{sec: quasi-convex subgroups}).  

\subsection{Reduced rooted graphs}

An \emph{$A$-graph} is a directed graph, whose edges are labeled by the letters of $A$. More formally, it is a pair of the form $\Gamma = (V,E)$ where $E \subseteq V \times A \times V$: $V$ is called the set of \emph{vertices}, and $E$ the set of \emph{edges}. If $e = (v,a,w) \in E$, then $a$ is the \emph{label} of $e$.

We say that $\Gamma$ is \emph{folded} if distinct edges starting (resp. ending) at the same vertex have distinct labels. If $v \in V$, we say that $\Gamma$ is \emph{$v$-trim} if every vertex except perhaps $v$ is incident to at least two edges. The pair $(\Gamma,v)$ is called a \emph{reduced rooted graph} if $\Gamma$ is connected (that is: the underlying undirected graph is connected), folded and $v$-trim. 

The pair $(\Gamma,v)$ can be viewed as an automaton, with the following convention: when an $a$-labeled edge is read against its orientation, we are actually reading the letter $a\inv$. We observe that if a word $w\in \tilde A^*$ can be read from vertex $p$ to vertex $q$ (labels a path from $p$ to $q$), then so can every word obtained from $w$ by a sequence of free group cancellations, including $\reduc(w)$ itself.

A reduced rooted (finite) $A$-graph $(\Gamma,v)$ yields a (finitely generated) subgroup of $F(A)$, namely the set of reduced words which label a loop at the vertex $v$, written $\calH(\Gamma,v)$. And conversely, every such subgroup $H \le F(A)$ yields a unique reduced rooted $A$-graph, called its \emph{Stallings graph} and written $(\Gamma(H),v)$.  Notice, that there is an algorithm to compute $\Gamma(H)$ from a set of generators of $H$ in almost linear time \cite{2006:Touikan}. We refer the reader to \cite{2002:KapovichMyasnikov} for further details.

\subsection{The Stallings graph of a subgroup of $G$}\label{sec: Stallings}

Let $H \le G$ be a subgroup of a group $G$. The \emph{Schreier graph} of $H$ in $G$, written $\Schreier_A(G,H)$ (or $\Schreier(G,H)$ if $A$ is understood), is the $A$-graph whose vertex is the set of all cosets $Hg$ ($g\in G$)  and whose edges are all the triples of the type  $(Hg,a,Hg\bar a)$ ($g\in G$, $a\in A$). This graph is clearly folded, and the words labeling loops at the vertex $H$ form exactly the set $\mu\inv(H)$. As before we assume that $(G,L)$ is  a fixed rational structure for $G$.

\begin{definition}
Let $(G,L)$ be a rational structure for the group $G$ and let $H$ be a subgroup of $G$. The \emph{Stallings graph of $H$ with respect to $L$} is the fragment of $\Schreier(G,H)$ rooted at the vertex $H$, and spanned by the loops at $H$ labeled by the $L$-representatives of the elements of $H$, that is, by the elements of $L\cap \mu\inv(H)$. This graph is denoted by $\Gamma_L(H)$ or $(\Gamma_L(H),H)$ to emphasize that $H$ is the root of the graph.
\end{definition}

It is easily verified that $(\Gamma_L(H),H)$ is a reduced rooted $A$-graph. Note that this definition coincides with the notion of \emph{core} introduced by Gitik \cite[Definition 6]{1996:Gitik}.

\begin{example}\label{ex: Stallings graph in PSL}
Let $G$ be the modular group, $G = \langle a, b\mid a^2, b^3\rangle$. Then $\LG$ is the set of reduced words over the 4-letter alphabet $\{a,a\inv,b,b\inv\}$, with no occurrence of two successive $a$, $a\inv$, $b$ or $b\inv$.  Figure~\ref{fig: Stallings graph in PSL} shows the Stallings graphs of the subgroups generated by $\{abab\inv, babab\}$, in $F(a,b)$ and in $\PSL$ (see Markus-Epstein \cite{2007:Markus-Epstein} for a justification in the modular case).
\begin{figure}[htbp]
\centering
\begin{picture}(88,32)(5,-32)
\gasset{Nw=4,Nh=4}

\node(n0)(27.0,-7.0){$H$}

\node(n1)(44.0,-7.0){}

\node(n2)(44.0,-29.0){}

\node(n3)(27,-29.0){}

\node(n4)(10,-29.0){}

\node(n5)(0.0,-18.0){}

\node(n6)(10.0,-7.0){}

\drawedge(n0,n1){$a$}

\drawedge(n0,n3){$b$}

\drawedge(n1,n2){$b$}

\drawedge(n2,n3){$a$}

\drawedge(n3,n4){$a$}

\drawedge(n4,n5){$b$}

\drawedge(n5,n6){$a$}

\drawedge(n6,n0){$b$}
\node(n0)(60.0,-4.0){$H$}

\node(n1)(66.0,-18.0){}

\node(n2)(60.0,-32.0){}

\node(n3)(94,-4.0){}

\node(n4)(90,-18.0){}

\node(n5)(94.0,-32.0){}

\drawedge(n0,n1){$b$}

\drawedge(n1,n2){$b$}

\drawedge(n2,n0){$b$}

\drawedge[ELside=r](n3,n4){$b$}

\drawedge[ELside=r](n4,n5){$b$}

\drawedge[ELside=r](n5,n3){$b$}

\drawedge[curvedepth=2.0](n0,n3){$a$}

\drawedge[curvedepth=2.0](n3,n0){$a$}

\drawedge[curvedepth=2.0](n1,n4){$a$}

\drawedge[curvedepth=2.0](n4,n1){$a$}

\drawedge[curvedepth=2.0](n2,n5){$a$}

\drawedge[curvedepth=2.0](n5,n2){$a$}
\end{picture}
\caption{\small Stallings graphs of subgroups of the free group and of the modular group.}\label{fig: Stallings graph in PSL}
\end{figure}
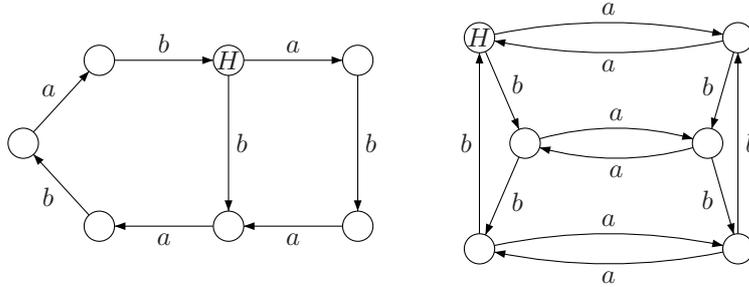
\end{example}

We will also use the following notion.

\begin{definition}
A reduced rooted $A$-graph $(\Gamma,v)$ is a \emph{Stallings-like} graph \emph{for $H$ with respect to $L$} if every loop at $v$ in $\Gamma$ is labeled by a word in $\mu\inv(H)$, and every $L$-representative of an element of $H$ labels a loop of $\Gamma$ at $v$.
\end{definition}

It follows directly from these two definitions that the Stallings graph of $H$ with respect to $L$ is Stallings-like for $H$. It is in fact minimal among the Stallings-like graphs in the following sense.

\begin{prop}\label{prop: Stallings minimal}
Let $(G,L)$ be a rational structure for $G$ and $H \le G$ a subgroup of $G$.    If $(\Gamma,v)$ is a  Stallings-like graph for $H$ with respect to $L$ then there exists a graph morphism from $(\Gamma,v)$ into $\Schreier(G,H)$, whose range contains $(\Gamma_L(H),H)$.
If in addition $(\Gamma,v)$ is a subgraph of $(\Schreier(G,H),H)$, then this graph morphism is injective.

In particular, $\Gamma_L(H)$ is the least reduced graph that is Stallings-like for $H$ with respect to $L$ (up to an isomorphism of labeled graphs), and it is the unique minimal Stallings-like subgraph of $\Schreier(G,H)$.

\end{prop}

\preuve
For each vertex $p$ of $\Gamma$, we let $\phi(p) = H\bar u$, where $u$ is a word labeling a path in $\Gamma$ from $v$ to $p$. It is easily verified that $\phi$ defines a morphism from $(\Gamma,v)$ into $\Schreier(G,H)$ (since every loop at $v$ in $H$ is labeled by an element of $\mu\inv(H)$). Note that if a morphism exists between two reduced rooted $A$-graphs, then it is unique.

Moreover, the image of $\Gamma$ contains $\Gamma_L(H)$ since every $L$-representative of an element of $H$ labels a loop in $\Gamma$ at $v$, and $\Gamma_L(H)$ is spanned by the loops labeled by these words.
\eop

\subsection{Quasi-convex subgroups}\label{sec: quasi-convex subgroups}

A subset $S \subseteq G$ is called $L$-\emph{quasi-convex}  if there exists an integer $k$ such that, for every word $w \in L \cap \mu^{-1}(S)$, the path in $\Cayley(G)$ which starts at 1 and is labeled by the word $w$, lies in the $k$-neighborhood of $S$, \emph{i.e.}, it stays always within distance at most $k$ from $S$. We then say that $k$ is a \emph{constant of $L$-quasi-convexity} for $S$. We now concentrate on the case of subgroups.

Given a subgroup $H \le G$ and an integer $k$, we denote by $\Schreier_k(G,H)$ the fragment of $\Schreier(G,H)$ consisting of the vertices at distance at most $k$ from vertex $H$, and all the edges between them. Then $(\Schreier_k(G,H),H)$ is a finite folded connected rooted $A$-graph, and we let $W_k(H)$ be the subgroup of $F(A)$, of which it is the Stallings graph (that is: $W_k(H) = \calH(\Schreier_k(G,H),H)$). Note that $\mu(W_k(H)) \subseteq H$ for every $k$.

\begin{prop}\label{prop: Stallings graphs of quasi-convex}\label{prop: Stallings minimal second}
Let $(G,L)$ be a rational structure for $G$.  Then a subgroup $H \leq G$ is $L$-quasi-convex if and only if it has a finite Stallings-like graph with respect to $L$, if and only if $\Gamma_L(H)$ is finite, if and only if $L\cap\mu\inv(H)$ is contained in $W_k(H)$ for some $k\ge 1$.

If $(\Gamma, v)$ is a rooted reduced graph which is Stallings-like for $H$ with respect to $L$, and every vertex of $\Gamma$ is at distance at most $k$ from the base vertex $v$, then $k$ is a constant of $L$-quasi-convexity for $H$.

The least constant of $L$-quasi-convexity of $H$ is the maximum distance from a vertex to the base vertex in the Stallings graph $\Gamma_L(H)$ or, equivalently, the least $k$ such that $(\Gamma_L(H),H)$ is contained in $(\Schreier_k(G,H),H)$.

If $H$ is $L$-quasi-convex with constant $k$, then $\Gamma_L(H)$ has diameter at most $2k$. If in addition $|A| = r$, then $\Gamma_L(H)$ has at most $1 + \frac r{r-1} \big((2r-1)^k-1\big)$ vertices.
\end{prop}

\preuve
The first statements follow directly from the definition of quasi-convexity. For the last statement, we note that every vertex of $\Schreier_k(G,H)$ is at distance at most $k$ from the base vertex $H$. The upper bound on the number of vertices relies on the following observation: every vertex of $\Schreier_k(G,H)$ is at the extremity of a path starting at $H$, labeled by a reduced word in $F(A)$ of length at most $k$. The number of such words is 1 at length 0 and $2r(2r-1)^{i-1}$ at length $i\ge 1$. Therefore there are at most 
$$1 + \sum_{i=1}^k 2r(2r-1)^{i-1} = 1 + 2r \frac{(2r-1)^k-1}{2(r-1)} = 1 + \frac r{r-1} \big((2r-1)^k-1\big)$$
 vertices.
\eop

The following result is well-known (see Gersten and Short \cite{1991:GerstenShort}).

\begin{theorem} \label{th:SS}
Let $(G,L)$ be a rational structure for $G$.  Then a subgroup $H \leq G$ is $L$-rational if and only if it is $L$-quasi-convex.
\end{theorem}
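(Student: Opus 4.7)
The plan is to prove the two implications separately, relying on Proposition~\ref{prop: Stallings graphs of quasi-convex} for one direction and on a standard pumping-style automaton argument for the other.

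For the implication ``$L$-quasi-convex $\Rightarrow$ $L$-rational'', I would argue as follows. If $H$ is $L$-quasi-convex, then by Proposition~\ref{prop: Stallings graphs of quasi-convex} the graph $(\Gamma_L(H),H)$ is finite. Viewed as a finite state automaton with the root $H$ as both initial and final state, it accepts some language $K \subseteq \tilde A^*$. Since $\Gamma_L(H)$ is a subgraph of $\Schreier(G,H)$, every loop at $H$ is labeled by an element of $\mu\inv(H)$, so $K \subseteq \mu\inv(H)$; and by the very definition of $\Gamma_L(H)$, every $L$-representative of an element of $H$ labels a loop at $H$, so $L \cap \mu\inv(H) \subseteq K$. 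Combining the two inclusions gives $L \cap \mu\inv(H) = K \cap L$, which is rational as the intersection of two rational languages.

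For the converse, suppose $L \cap \mu\inv(H)$ is rational, and fix a finite state automaton $\calB$ accepting it, say with $N$ states. Let $w \in L \cap \mu\inv(H)$ and write $w = uv$ for an arbitrary factorization. Because $u$ is a prefix of an accepted word, the state $q_u$ reached after reading $u$ in $\calB$ is co-accessible, so there exists a word $v'$ of length at most $N$ leading from $q_u$ to a final state; then $uv' \in L \cap \mu\inv(H)$, whence $\overline{uv'} \in H$. Therefore $d(\overline{u}, H) \le |v'| \le N$, which shows that every vertex on the path in $\Cayley(G)$ labeled by $w$ starting at $1$ lies within distance $N$ of $H$. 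Hence $H$ is $L$-quasi-convex with constant $N$.

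Both directions are essentially routine once the dictionary between $A$-graphs and automata is in hand; the main conceptual point (rather than an obstacle) is the identification $L \cap \mu\inv(H) = L(\Gamma_L(H)) \cap L$ in the first direction, which follows immediately from the two defining properties of a Stallings-like graph. The converse direction is the standard ``bounded detour'' argument underlying the Gersten--Short equivalence, and requires no additional hypothesis on $L$ beyond its rationality.
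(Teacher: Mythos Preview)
Your proof is correct. Note, however, that the paper does not actually give its own proof of this theorem: it is stated as a well-known result with a citation to Gersten and Short \cite{1991:GerstenShort}, so there is nothing in the paper to compare your argument against. What you have written is essentially the standard argument behind that reference, and it fits cleanly with the surrounding material (in particular your use of Proposition~\ref{prop: Stallings graphs of quasi-convex} for the forward direction is legitimate, since that proposition is proved independently of Theorem~\ref{th:SS}).

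One small point of polish in the converse direction: when you speak of \emph{the} state $q_u$ reached after reading $u$, you are implicitly assuming $\calB$ is deterministic. This is harmless---one can either determinize first, or fix an accepting run of $uv$ in $\calB$ and take $q_u$ to be the state at position $|u|$ along that run---but it is worth saying explicitly. With that tweak the bound $|v'| \le N$ (shortest path from $q_u$ to a final state in an $N$-state automaton) is immediate.
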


If $L = \LG$, the set of all geodesics, we talk of \emph{quasi-convexity} (instead of $L$-quasi-convexity). We note the following relations between $L$-quasi-convexity and quasi-convexity.

\begin{fact}\label{fact: sufficient condition for quasi-convexity equivalence}
If $\LG \subseteq L$ and $H\le G$ is $L$-quasi-convex with constant $k$, then $H$ is also quasi-convex, with the same constant.

Suppose that, for some $\ell\ge 1$, every element $w\in L$ labels a path from 1 in $\Cayley(G)$ which remains in the $\ell$-neigborhood of any geodesic from 1 to $\ov w$. If $H$ is quasi-convex with constant $k$, then $H$ is $L$-quasi-convex with constant $k+\ell$.
\end{fact}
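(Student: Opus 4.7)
The two claims are essentially direct consequences of unpacking the definition of $L$-quasi-convexity, so I would expect a short, self-contained argument with no real obstacle. Let me sketch what I would write.

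For the first claim, the plan is to observe that quasi-convexity is monotone in the set of representatives being tested. More precisely, the definition requires that every path in $\Cayley(G)$ starting at $1$ and labeled by a word in $L \cap \mu\inv(H)$ lie in the $k$-neighborhood of $H$. Since $\LG \subseteq L$, we have $\LG \cap \mu\inv(H) \subseteq L \cap \mu\inv(H)$, so the same bound applies in particular to every geodesic representative of an element of $H$. This is exactly the definition of quasi-convexity (with $L = \LG$) with constant $k$.

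For the second claim, I would take an arbitrary $w \in L \cap \mu\inv(H)$ and show that the path $\pi_w$ in $\Cayley(G)$ starting at $1$ and labeled by $w$ stays in the $(k+\ell)$-neighborhood of $H$. Set $g = \ov w \in H$, and let $u \in \LG$ be any geodesic representative of $g$, so that the corresponding path $\pi_u$ runs from $1$ to $g$ in $\Cayley(G)$. By the hypothesis on $L$, $\pi_w$ lies in the $\ell$-neighborhood of $\pi_u$. Since $u \in \LG \cap \mu\inv(H)$ and $H$ is quasi-convex with constant $k$, the path $\pi_u$ lies in the $k$-neighborhood of $H$. Combining these two inclusions by the triangle inequality gives that every vertex on $\pi_w$ is within distance $k + \ell$ of $H$, as required.

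Both arguments fit in a few lines and use only the definitions from Section~\ref{sec: quasi-convex subgroups}; there is no real obstacle. If anything warrants care, it is simply making sure the quantifiers in the definition of $L$-quasi-convexity are invoked correctly (``for every word in $L \cap \mu\inv(H)$'') and that in the second part the hypothesis on $L$ is used against \emph{some} geodesic $u$ representing $\ov w$, which exists for any element of $G$.
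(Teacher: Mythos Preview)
Your argument is correct and is exactly the intended one: the paper states this result as a Fact without proof, treating both claims as immediate from the definition of $L$-quasi-convexity, and your two-line unpacking (monotonicity in the first part, triangle inequality in the second) is precisely what is implicit there.
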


\section{Effective computation}\label{sec: effective computation}

Let $G$ be an $A$-generated group and let $(G,L)$ be a rational structure for $G$. If $h_1, \ldots, h_s$ are reduced words in $F(A)$ such that $H = \langle \bar h_1,\ldots, \bar h_r\rangle$, we say for short that \emph{$h_1, \ldots, h_s$ generate the subgroup $H$ of $G$}. Proposition~\ref{prop: Stallings graphs of quasi-convex} established that if $H$ is $L$-quasi-convex if and only if the Stallings graph $\Gamma_L(H)$ is finite. We show the following result.

\begin{theorem}\label{thm: effective computation}
There exists a partial algorithm which, given an automatic structure for a group $G$ with language of representatives $L$, and given finitely many reduced words generating a subgroup $H$ of $G$, stops and computes the Stallings graph of $H$ with respect to $L$ if $H$ is $L$-quasi-convex, and never stops if $H$ is not $L$-quasi-convex.
\end{theorem}

%
%

The proof of Theorem~\ref{thm: effective computation}, given in Sections~\ref{sec: completion process}, \ref{sec: computing a Stallings-like graph} and \ref{sec: computing}, explicitly exhibits the partial algorithm announced in the statement.
We point out that the theorem applies to the case of the quasi-convex subgroups of hyperbolic groups or right-angled Artin groups, since such groups are geodesically automatic (see Section \ref{se:intro-hyp}). 

Before we give our construction, we record the following corollary (see also Section~\ref{sec: complexity}), a result already proved by Kapovich \cite{1996:Kapovich}. 

\begin{cor}\label{cor: compute constant}
There exists a partial algorithm which, given an automatic structure for a group $G$ with language of representatives $L$, and given finitely many reduced words generating a subgroup $H$ of $G$, stops and computes the least constant of $L$-quasi-convexity of $H$ with respect to $L$ if $H$ is $L$-quasi-convex, and never stops if $H$ is not $L$-quasi-convex.
\end{cor}

\preuve
It is a direct consequence of the definition of the Stallings graph $\Gamma_L(H)$ that the least $L$-quasi-convexity constant of $H$ is the maximal distance from the base vertex $H$ to a vertex of $\Gamma_L(H)$ (see also Proposition~\ref{prop: Stallings graphs of quasi-convex}). It is therefore computable once we have computed $\Gamma_L(H)$.
\eop

\begin{remark}\label{rk: least constant of quasi-convexity}
We note that the parameter used in the proof of Corollary~\ref{cor: compute constant}, namely the maximal distance from the root to a vertex, computed on any reduced rooted graph which is Stallings-like for $H$ with respect to $L$, is also an $L$-quasi-convexity constant for $H$, although maybe not minimal.

In \cite{1996:Kapovich}, Kapovich actually computes \emph{some} $L$-quasi-convexity constant of the subgroup $H$, and not the least one. However, once such a constant is known, one can find the least constant by some brute force exploration. Computing the Stallings graph of $H$ as we do in Theorem~\ref{thm: effective computation}, reveals the least constant much more directly.
\end{remark}

Let us now turn to the proof of Theorem~\ref{thm: effective computation}. The computation of the Stallings graph of an $L$-quasi-convex subgroup is in two steps. The first one consists in a completion process, described in Section~\ref{sec: completion process}, which does not terminate in general, yet eventually produces a Stallings-like graph for $H$ with respect to $L$, if $H$ is $L$-quasi-convex. We then show, in Section~\ref{sec: computing a Stallings-like graph}, that the automatic structure on $G$ allows us to determine when a Stallings-like graph has been produced, and hence when to stop the first phase of the algorithm (this is the only point where we use the automatic structure of $G$). This graph in turns yields a solution to the membership problem for $H$ (Corollary \ref{cor: membership problem for quasi-convex}). The last step uses this solution of the membership problem to effectively construct the Stallings graph $\Gamma_L(H)$ (Section~\ref{sec: computing}).

The complexity of this algorithm is discussed in Section~\ref{sec: complexity}.

\subsection{A completion process}\label{sec: completion process}

Let $G = \langle A \mid R \rangle$ be a finitely presented  group equipped with an automatic structure, and let $(G,L)$ be the corresponding rational structure for $G$. We assume that the set of relators $R$ is closed under cyclic permutation of its elements and under taking inverses.

Let $h_1,\ldots, h_s$ be reduced words in $F(A)$ and let $H$ be the subgroup of $G$ generated by $\ov h_1,\ldots,\ov h_s$. We make no other assumption on $H$ at this stage. The completion process we now describe does not halt in general. Corollary~\ref{cor: eventually a weak Stallings graph} below shows that it eventually produces Stallings-like graphs for $H$ if $H$ is quasi-convex. Effectively computing such a graph will be done in Section~\ref{sec: computing a Stallings-like graph}.

We define the following sequence of reduced rooted graphs.

\begin{itemize}
\item $(\Gamma_0,1)$ is  the Stallings graph of the subgroup of $F(A)$ generated by the $h_i$.
\item Suppose the graph $(\Gamma_i,1)$ is constructed. Then $\Gamma_{i+1}$ is constructed by 
\begin{itemize}
\item[(a)] adding to $\Gamma_i$, at every vertex, every relator from $R$ and every word $aa\inv$ ($a\in \tilde A$), viewed as a labeled circle; and
\item[(b)] folding the resulting labeled graph, that is, iteratively identifying pairs of edges with the same label and with equal initial (resp. terminal) vertices.
\end{itemize}
\end{itemize}

\begin{remark}
Note that we do not need to bother adding the loops labeled $aa\inv$ when letter $a$ occurs in $R$.
\end{remark}

By construction, every $(\Gamma_i,1)$ is a finite reduced rooted graph. For each $i\ge 0$, we let $\calL_i$ be the set of words (possibly not reduced) which label a loop at 1 in $\Gamma_i$, and $H_i = \reduc(\calL_i) = \calH(\Gamma_i,1) \le F(A)$.

\begin{lemma}
\label{le: increasing sequence}
For each $i\ge 0$, $\calL_i \le \calL_{i+1}$, $H_i \le H_{i+1}$ and $\mu(H_i) = H$. Moreover the subgroups $H_i \le F$ are finitely generated and $H_0 = \langle h_1, \ldots, h_s\rangle$.
\end{lemma}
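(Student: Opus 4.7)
The plan is to verify the five assertions of the lemma in order, with the only substantive point being the ``$\mu(H_i) \subseteq H$'' half of the third assertion. The statement $H_0 = \langle h_1,\ldots,h_s\rangle$ is immediate from the defining property of the Stallings graph of a finitely generated subgroup of $F(A)$ (Section~3.1), and the fact that each $H_i$ is finitely generated follows because $\Gamma_i$ is a finite connected folded graph, so $\calH(\Gamma_i,1)$ is the fundamental group of a finite graph and has rank $|E|-|V|+1$.

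For $\calL_i \subseteq \calL_{i+1}$, I would observe that step (a) only adds edges and vertices to $\Gamma_i$, so every loop at $1$ in $\Gamma_i$ survives as a loop at $1$ in the intermediate graph; step (b) is a sequence of Stallings foldings, and folding two edges into one at most shortens paths but never destroys them, so the image of a loop at $1$ is still a loop at (the image of) $1$ in $\Gamma_{i+1}$. Applying free reduction gives $H_i \le H_{i+1}$.

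The main point is $\mu(H_i) = H$. The inclusion $H \subseteq \mu(H_i)$ is free, since $H = \mu(H_0) \subseteq \mu(H_i)$ by the second assertion. For the reverse inclusion I would build, by induction on $i$, a label-preserving graph morphism $\psi_i \colon (\Gamma_i,1) \to (\Schreier(G,H),H)$. The base case $i=0$ exists because $\mu(H_0) = H$, so one can set $\psi_0(v) = H\ov u$ where $u$ labels any path from $1$ to $v$ in $\Gamma_0$ (independence on $u$ follows from $H_0 \le H$). For the inductive step, each relator loop added at a vertex $v$ is labeled by a word $r \in R$ with $\ov r = 1$, so reading $r$ from $\psi_i(v)$ in $\Schreier(G,H)$ returns to $\psi_i(v)$; the same holds trivially for the $aa\inv$ loops. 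Hence $\psi_i$ extends to the augmented graph, and this extension factors through each folding (if two $a$-edges out of $v$ must be identified, their targets map to $\psi_i(v)\cdot\ov a$ already). The resulting morphism $\psi_{i+1}$ sends loops at $1$ to loops at $H$ in $\Schreier(G,H)$, which is exactly the statement that every word in $\calL_{i+1}$, hence in $H_{i+1}$, lies in $\mu\inv(H)$.

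The only step requiring real care is the verification that the extension of $\psi_i$ is compatible with folding, which amounts to checking that folding only identifies vertices with the same image in $\Schreier(G,H)$; this is a standard induction on the length of the folding sequence, using that $\Schreier(G,H)$ is itself folded.
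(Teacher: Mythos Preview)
Your proof is correct. The only place where your argument differs from the paper's is in the verification that $\mu(H_i)\subseteq H$, and your route is genuinely different there.

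The paper argues directly at the level of words: a reduced loop $w$ at $1$ in $\Gamma_{i+1}$ lifts to a loop $w'$ in the intermediate graph $\Gamma'_i$ (before folding), and such a loop can be decomposed as a loop $w''$ in $\Gamma_i$ with relator-circles inserted at various vertices; since each relator has trivial image, $\ov{w} = \ov{w''}$, and the induction hypothesis gives $\ov{w''}\in H$. Your argument instead constructs, inductively, a label-preserving morphism $\psi_i\colon(\Gamma_i,1)\to(\Schreier(G,H),H)$ and observes that such a morphism forces every loop label into $\mu\inv(H)$. This is exactly the mechanism of Proposition~\ref{prop: Stallings minimal}, applied one step earlier in the development; in effect you are showing that each $(\Gamma_i,1)$ satisfies condition~(1) of being Stallings-like for $H$. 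Your approach is slightly more structural and makes the compatibility with folding explicit (using that $\Schreier(G,H)$ is itself folded), whereas the paper's word-decomposition argument is shorter but leaves the ``can be analyzed as a loop in $\Gamma_i$ with relators inserted'' step somewhat informal. Either way the content is the same: relators and $aa\inv$-loops have trivial $\mu$-image, so attaching them does not enlarge the image in $G$.
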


\begin{proof}
It is  immediate that every word which labels a loop at 1 in $\Gamma_i$ also does so in $\Gamma_{i+1}$, so  $\calL_i \le \calL_{i+1}$ and $H_i \le H_{i+1}$. Each $H_i$ is finitely generated since $\Gamma_i$ is finite.
Notice that $H_0 = \langle h_1, \ldots, h_s\rangle$ by construction. Therefore $\mu(H_0) = H$ and hence $H \le \mu(H_i)$ for each $i$.

We prove by induction that $H = \mu(H_i)$ for each $i$.  It is already established if $i = 0$ and we assume that it holds for some $i\ge 0$. Consider a reduced word $w$ labeling a loop at $1$ in $\Gamma_{i+1}$. By construction of $\Gamma_{i+1}$, $w$ is the free  reduction of a (possibly non-reduced) word $w'$, labeling a loop at 1 in the graph $\Gamma'_i$ obtained from $\Gamma_i$ by gluing every relator from $R$, viewed as a labeled circle, to every vertex. Such a path can be analyzed as a loop in $\Gamma_i$ at vertex 1 with label, say $w''$, with relators inserted as loops at various points. In particular, $\overline{w''} = \overline{w'}$ since $\overline r = 1$ for each $r\in R$, so $\overline{w''} \in H$ by the induction hypothesis, and hence $\overline w = \overline{w''} \in H$.
\end{proof}

Let $\widetilde D_R$ be the follows rewriting system
$$\widetilde D_R = \{ r_1 \to r_2 \mid r_1r_2\inv \in R\} \cup \{aa^{-1} \to 1 \mid a \in \tilde A\} \cup \{1 \to aa^{-1} \mid a \in \tilde A\}.$$
Obviously, two words $u,v\in \tilde A^*$ determine the same element in $G$  if and only if one can pass from $u$ to $v$ by a sequence of rewrites  using rules of the system $\widetilde D_R$. In particular, the non-reduced word problem of $G$, namely the set $\widetilde{\WP}(G)$ of all words in $\tilde A^*$ defining  the trivial element in $G$, consists of all  words that can be deduced from the empty word using the rules of $\widetilde D_R$.

We show that the sequence $(\calL_i)_i$ gradually includes all the words from $\widetilde\WP(G)$.

\begin{prop}
\label{le: incorporate more rules}
Let $i\ge 0$. If a word $w\in \widetilde\WP(G)$ is obtained from 1 by the application of a sequence of at most $i$ rules from $\widetilde D_R$, then $w\in \calL_i$.
\end{prop}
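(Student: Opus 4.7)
The plan is to proceed by induction on $i$.

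For the base case $i = 0$, the only word obtainable from the empty word by zero rewrites is the empty word itself, which labels the trivial loop at the root of $\Gamma_0$; hence it lies in $\calL_0$.

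For the inductive step, assume the claim for some $i \ge 0$ and suppose $w$ is obtained from $1$ by a sequence of $i+1$ rewrites in $\widetilde D_R$. Factor this derivation as $1 \to^* w' \to w$, where the first part uses at most $i$ rules. By induction, $w' \in \calL_i$, so $w'$ labels a loop at the root of $\Gamma_i$. Write $w' = uxv$ and $w = uyv$, where $x \to y$ is the final rule applied. Let $\phi$ denote the natural graph morphism from $\Gamma_i$ to $\Gamma_{i+1}$ induced by steps (a) and (b) of the construction. Then $u$ labels a path in $\Gamma_{i+1}$ from the root to some vertex $\phi(p)$, and $v$ labels a path from some vertex $\phi(q)$ back to the root, where in $\Gamma_i$ the factor $x$ (if non-trivial) labels a path from $p$ to $q$. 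It then suffices to show that in $\Gamma_{i+1}$ the word $y$ labels a path from $\phi(p)$ to $\phi(q)$, by a case analysis on the type of the rule $x \to y$.

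If $x = aa^{-1}$ and $y = 1$, then $p = q$ (since $\Gamma_i$ is folded) and $w = uv$ already labels a loop in $\Gamma_i$, hence in $\Gamma_{i+1}$. If $x = 1$ and $y = aa^{-1}$, the loop labeled $aa^{-1}$ was glued at every vertex during step (a), so after folding $aa^{-1}$ still labels a loop at $\phi(p)$. If $x = r_1$, $y = r_2$ and $r_1 r_2\inv \in R$, then during step (a) the relator $r_1 r_2\inv$ was glued at $p$ as a labeled circle, providing a path $p \xrightarrow{r_1} s \xrightarrow{r_2\inv} p$ with $s$ a newly added vertex. But the path $p \xrightarrow{r_1} q$ already existed in $\Gamma_i$, so after folding (step (b)), these two $r_1$-paths from $p$ must be identified edge by edge; in particular $\phi(s) = \phi(q)$, and thus $r_2$ labels a path from $\phi(p)$ to $\phi(q)$ in $\Gamma_{i+1}$. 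In all three cases, $w \in \calL_{i+1}$, completing the induction.

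The main subtlety is in the third case, where one needs to justify that the folding step genuinely identifies the endpoint of the newly added $r_1$-arc with the pre-existing endpoint $q$. This is a standard property of Stallings foldings — two reduced paths with the same label starting at the same vertex of a folded graph end at the same vertex — but it is the only place where the mechanics of the construction have to be unpacked carefully; the other two cases are essentially immediate from the definition of $\Gamma_{i+1}$.
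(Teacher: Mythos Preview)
Your proof is correct and follows essentially the same approach as the paper: induction on $i$, with the same three-case analysis of the final rewrite rule. Your use of the explicit morphism $\phi\colon\Gamma_i\to\Gamma_{i+1}$ and your justification in the relator case (that folding forces the new endpoint $s$ and the old endpoint $q$ to be identified) spell out a bit more than the paper does, but the argument is the same.
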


\begin{proof}
The proof is by induction on $i$, and the result is trivial for $i = 0$. Let us assume that the result holds for some $i\ge 0$ and suppose that $w$ is obtained from 1 by a sequence of $i+1$  applications of rules from $\widetilde D_R$. Let $w_1$ be the word obtained after the application of the first $i$ rules of this sequence. Then, by induction, $w_1 \in \calL_i$, that is, $w_1$ labels a loop at 1 in $\Gamma_i$. Moreover, one can go from $w_1$ to $w$ by the application of a single rule of $\widetilde D_R$.

If $w$ is obtained from $w_1$ by deleting a factor $aa\inv$ ($a\in \widetilde A$), then $w$ also labels a loop at 1 in $\Gamma_i$ (since $\Gamma_i$ is reduced), that is: $w\in \calL_i$, and hence $w\in \calL_{i+1}$.

If $w$ is obtained from $w_1$ by inserting a factor $aa\inv$ ($a\in \widetilde A$), then there exist $x,y\in F(A)$ such that $w_1 = xy$ and $w = xaa\inv y$. In particular, $\Gamma_i$ has an $x$-labeled path from 1 to a vertex $p$ and a $y$-labeled path from $p$ to 1. The construction of $\Gamma_{i+1}$ from $\Gamma_i$ includes the addition of an $a$-labeled edge starting at vertex $p$, so after folding, $w$ labels a loop at 1 in $\Gamma_{i+1}$.

Finally, if $w$ is obtained from $w_1$ by substituting $r_2$ for $r_1$, for some $r_1,r_2\in F(A)$ such that $r_1r_2\inv\in R$, then there exist $x,y\in F(A)$ such that $w_1 = xr_1y$ and $w = xr_2y$. In particular, $\Gamma_i$ has an $x$-labeled path from 1 to a vertex $p$, an $r_1$-labeled path from $p$ to a vertex $q$ and a $y$-labeled path from $q$ to 1. The construction of $\Gamma_{i+1}$ from $\Gamma_i$ includes the addition of an $r_1r_2\inv$-labeled loop at vertex $p$. After folding, there results an $x$-labeled path from 1 to a vertex $p'$, and an $r_2y$-labeled path from $p'$ to 1. Therefore $w\in \calL_{i+1}$.
\end{proof}

As a result of Proposition~\ref{le: incorporate more rules}, the sequence $H_i$ gradually includes all words in $\mu\inv(H)$, and all finitely generated subgroups contained in $\mu\inv(H)$.

\begin{cor}\label{cor: encompass all words in mu inv H}
Let $w\in \tilde A^*$. If $\mu(\reduc(w)) \in H$, then $w \in \calL_i$ and $\reduc(w) \in H_i$ for some $i\ge 0$.
\end{cor}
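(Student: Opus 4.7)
My plan is to chain Lemma~\ref{le: increasing sequence} and Proposition~\ref{le: incorporate more rules} together, using a detour through the non-reduced word problem. Set $g = \mu(\reduc(w)) \in H$. By Lemma~\ref{le: increasing sequence}, $H = \mu(H_0)$, so there exists a reduced word $h' \in H_0 \le F(A)$ with $\mu(h') = g$. Then the word $u = w(h')\inv \in \tilde A^*$ satisfies $\mu(u) = \mu(\reduc(w)) \mu(h')\inv = g g\inv = 1$, so $u \in \widetilde\WP(G)$. Since every element of $\widetilde\WP(G)$ is obtained from the empty word by some finite sequence of rewrites from $\widetilde D_R$, Proposition~\ref{le: incorporate more rules} gives an index $i$ such that $u = w(h')\inv \in \calL_i$, i.e., $u$ labels a loop at $1$ in $\Gamma_i$.

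Next I want to promote this to the assertion that $w$ itself labels a loop at $1$ in $\Gamma_i$. Since $h' \in H_0 \le H_i$ and $h'$ is reduced, $h'$ already labels a loop at $1$ in $\Gamma_i$. On the other hand, the fact that $u = w(h')\inv$ labels a loop at $1$ means that reading $w$ from the basepoint leads to some vertex $p$, and that reading $h'$ from $1$ also leads to $p$ (since $(h')\inv$ then takes $p$ back to $1$). This is where I need the main structural point: because $\Gamma_i$ is folded, following a reduced word from a fixed starting vertex is deterministic, so $h'$ being readable from $1$ to both $1$ and $p$ forces $p = 1$. Hence $w$ labels a loop at $1$ in $\Gamma_i$, i.e., $w \in \calL_i$, and then $\reduc(w) \in \reduc(\calL_i) = H_i$ is immediate from the definition of $H_i$.

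The main (mild) obstacle I anticipate is precisely the last deterministic-readability step: I need to be explicit that folding gives unique readability for \emph{reduced} words, whereas $u$ and $w$ themselves need not be reduced. The trick is to place the reduced word $h'$ in the middle, exploiting the fact that $H_0 \le H_i$ guarantees $h'$ is already legible as a loop in the earlier graph $\Gamma_0$, and therefore in every $\Gamma_i$. Once this matching of endpoints is secured, the rest of the proof is a bookkeeping exercise between $\calL_i$ and $H_i = \reduc(\calL_i)$.
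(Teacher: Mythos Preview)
Your proof is correct and follows essentially the same approach as the paper's: pick a reduced word in $H_0$ with the same image as $w$, concatenate to land in $\widetilde\WP(G)$, apply Proposition~\ref{le: incorporate more rules}, and then use foldedness to conclude. The only cosmetic differences are that the paper concatenates in the opposite order (using $w'w\inv$ rather than $w(h')\inv$) and leaves the deterministic-readability step implicit, whereas you spell it out; note that foldedness actually makes reading \emph{any} word deterministic (not just reduced ones), so your caution about $u$ and $w$ being non-reduced is unnecessary but harmless.
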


\begin{proof}
If $\mu(\reduc(w)) \in H$, then $\mu(\reduc(w)) = \mu(w')$ for some $w'\in H_0$. Then $w'w\inv \in \widetilde\WP(G)$, and hence $w'w\inv\in \calL_i$ for some $i$. Since $w'\in H_0 \le H_i$ and $w'$ is a reduced word, we have $w'\in \calL_i$, which yields $w\in\calL_i$.
\end{proof}

\begin{cor}\label{cor: H_i encompasses every reduced fg subgroup}
Let $K\le F(A)$ be a finitely generated subgroup, such that $\mu(K) \le H$. Then there exists an integer $i$ such that $K \le H_i$.
\end{cor}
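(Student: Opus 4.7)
The plan is to reduce the statement to Corollary~\ref{cor: encompass all words in mu inv H} by using a finite generating set of $K$ together with the monotonicity of the sequence $(H_i)_i$.

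First I would fix a finite generating set $k_1, \ldots, k_m$ of $K$, where each $k_j$ is a reduced word in $F(A)$. Since $\mu(K) \le H$, we have $\mu(k_j) = \mu(\reduc(k_j)) \in H$ for every $j$. Therefore Corollary~\ref{cor: encompass all words in mu inv H} applies to each $k_j$, producing an integer $i_j$ such that $k_j \in H_{i_j}$.

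Next I would set $i = \max(i_1, \ldots, i_m)$. By Lemma~\ref{le: increasing sequence}, the sequence $(H_i)_i$ is non-decreasing, so $k_j \in H_{i_j} \le H_i$ for every $j$. Since $H_i$ is a subgroup of $F(A)$ containing all the generators $k_j$ of $K$, we conclude $K = \langle k_1, \ldots, k_m \rangle \le H_i$, as required.

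There is no real obstacle here: the statement is essentially a finite generation argument layered on top of the previous corollary, and the only ingredients needed are the existence of a finite generating set for $K$ (by hypothesis) and the monotonicity of $(H_i)_i$ (already established).
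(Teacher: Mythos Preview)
Your proof is correct and follows essentially the same approach as the paper: pick finitely many generators of $K$, apply Corollary~\ref{cor: encompass all words in mu inv H} to each, and take the maximum of the resulting indices. The only cosmetic difference is that the paper takes a basis rather than an arbitrary finite generating set, and leaves the monotonicity appeal to Lemma~\ref{le: increasing sequence} implicit.
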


\begin{proof}
Let $k_1,\ldots, k_p$ be a basis of $K$. By Corollary~\ref{cor: encompass all words in mu inv H}, for each $1\le j \le p$, there exists an integer $i_j$ such that $k_j \in H_{i_j}$. Let $i = \max(i_j \mid 1 \le j \le p)$. Then each $k_j \in H_i$, and hence $K \le H_i$.
\end{proof}

Applying Corollary~\ref{cor: H_i encompasses every reduced fg subgroup} to the subgroups defined by the sequence of graphs $\Schreier_k(G,H)$ ($k\ge 1$), we see that the $H_i$ gradually incorporate reduced words whose $\mu$-image is in $H$, yet label paths in $\Cayley_A(G)$ which go further and further from $H$.

\begin{cor}\label{W_k in some Gamma_i}
For each integer $k\ge 1$, there exists an integer $i$ such that $W_k(H) \le H_i$.
\end{cor}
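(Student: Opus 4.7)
The plan is to derive this corollary as an immediate application of Corollary~\ref{cor: H_i encompasses every reduced fg subgroup} to $K = W_k(H)$. So all that needs to be verified is that $W_k(H)$ meets the two hypotheses of that corollary, namely that it is a finitely generated subgroup of $F(A)$ and that $\mu(W_k(H)) \le H$.

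First I would observe that $\Schreier_k(G,H)$ is finite: its vertices are the cosets of $H$ in $G$ at distance at most $k$ from $H$, and since $A$ is finite, the ball of radius $k$ in a locally finite graph is finite. Hence $(\Schreier_k(G,H), H)$ is a finite folded rooted $A$-graph, and so $W_k(H) = \calH(\Schreier_k(G,H), H)$ is a finitely generated subgroup of $F(A)$ (indeed, one can explicitly read off a basis using a spanning tree, as in the classical Stallings-graph setting).

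Next, any reduced word $w \in W_k(H)$ labels a loop at the base vertex $H$ in $\Schreier_k(G,H)$; such a loop is also a loop at $H$ in the full Schreier graph $\Schreier(G,H)$, and the words labeling loops at $H$ in $\Schreier(G,H)$ are exactly the elements of $\mu\inv(H)$. Therefore $\mu(w) \in H$, giving $\mu(W_k(H)) \le H$.

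With both hypotheses verified, Corollary~\ref{cor: H_i encompasses every reduced fg subgroup} applied to $K = W_k(H)$ produces an integer $i$ with $W_k(H) \le H_i$, which is exactly the claim. There is no real obstacle here; the content of the result lies entirely in Corollary~\ref{cor: H_i encompasses every reduced fg subgroup} (and ultimately in Proposition~\ref{le: incorporate more rules}), and this corollary is just the instantiation that will be useful in the next section to detect termination of the completion process.
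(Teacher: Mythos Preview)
Your proposal is correct and follows exactly the paper's approach: the paper derives this corollary in one sentence by ``applying Corollary~\ref{cor: H_i encompasses every reduced fg subgroup} to the subgroups defined by the graphs $\Schreier_k(G,H)$,'' and the two hypotheses you verify (finite generation of $W_k(H)$ and $\mu(W_k(H)) \subseteq H$) are already recorded in Section~\ref{sec: quasi-convex subgroups}. There is nothing to add.
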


If the subgroup $H$ is $L$-quasi-convex with constant $k$, then $W_k(H)$ contains all the $L$-representatives of every element of $H$. Thus, Corollary~\ref{W_k in some Gamma_i} implies the following statement.

\begin{cor}\label{cor: eventually a weak Stallings graph}
If $H$ is $L$-quasi-convex, then $\Gamma_i$ is a Stallings-like graph for $H$ with respect to $L$ for every large enough $i$, namely, for every $i$ at least equal to an $L$-quasi-convexity constant.
\end{cor}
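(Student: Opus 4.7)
The plan is to verify the two defining properties of a Stallings-like graph for $H$ with respect to $L$: (1) every loop at the root of $\Gamma_i$ is labeled by a word in $\mu\inv(H)$, and (2) every $L$-representative of an element of $H$ labels a loop at the root of $\Gamma_i$. Splitting the proof along these two conditions isolates where the quasi-convexity hypothesis is actually used.

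For (1), I would observe that this is automatic for every $i \ge 0$, independently of quasi-convexity. Indeed, if $w \in \tilde A^*$ labels a loop at $1$ in the folded graph $\Gamma_i$, then so does its free reduction $\reduc(w)$, whence $\reduc(w) \in \calH(\Gamma_i,1) = H_i$ and $\ov{w} = \ov{\reduc(w)} \in \mu(H_i) = H$ by Lemma~\ref{le: increasing sequence}. So condition~(1) costs nothing.

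The interesting half is (2), and this is where $L$-quasi-convexity enters. Fix a constant $k$ of $L$-quasi-convexity for $H$. By Proposition~\ref{prop: Stallings graphs of quasi-convex}, every element of $L \cap \mu\inv(H)$ labels a loop at $H$ in $\Schreier_k(G,H)$, so that $L \cap \mu\inv(H) \subseteq W_k(H)$. By Corollary~\ref{W_k in some Gamma_i}, there exists an index $i_0$ with $W_k(H) \le H_{i_0}$, and the monotonicity of the sequence $(H_i)$ from Lemma~\ref{le: increasing sequence} gives $W_k(H) \le H_i$ for every $i \ge i_0$. Consequently, for every such $i$, every $L$-representative of an element of $H$ belongs to $H_i$, i.e.\ labels a reduced loop at $1$ in $\Gamma_i$, which is condition~(2).

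There is essentially no obstacle here: this corollary is an assembly of the preceding incremental statements. The only point worth being careful about is that Corollary~\ref{W_k in some Gamma_i} only asserts the existence of some threshold $i_0$; to make the \emph{namely} clause of the statement precise, one traces the dependence of $i_0$ on $k$ by applying Corollaries~\ref{cor: encompass all words in mu inv H} and~\ref{cor: H_i encompasses every reduced fg subgroup} to a finite generating set of $W_k(H)$ (which is finitely generated because $\Schreier_k(G,H)$ is a finite graph). The rest is just bookkeeping between the labeled graph $\Gamma_i$ and the subgroup $H_i$ it defines.
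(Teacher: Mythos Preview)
Your argument is correct and follows essentially the same route as the paper: condition~(1) comes for free from Lemma~\ref{le: increasing sequence} (since $\mu(H_i)=H$), and condition~(2) is obtained by combining the inclusion $L\cap\mu\inv(H)\subseteq W_k(H)$ (from quasi-convexity with constant $k$) with Corollary~\ref{W_k in some Gamma_i}. Your remark on the \emph{namely} clause is apt: the paper's own proof, like yours, only yields the existence of a threshold $i_0$ via Corollary~\ref{W_k in some Gamma_i}, and does not literally show that one may take $i_0$ equal to a given quasi-convexity constant.
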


Note that this statement does not solve the question of computing a Stallings-like graph.

\subsection{Computing Stallings-like graphs}\label{sec: computing a Stallings-like graph}

Let $G, L, H, h_1, \ldots, h_s$ be as in Section~\ref{sec: completion process}. We now assume that $H$ is $L$-quasi-convex -- but we are not given an $L$-quasi-convexity constant. We first show that one can decide whether a given reduced rooted graph is Stallings-like for $H$ with respect to $L$. This requires the following technical lemma.

\begin{lemma}\label{lemma: compute multiplyer}
There is an algorithm which, given an automatic structure for a group $G$, with set of representatives $L$, and given $h\in F(A)$ and a rational language $K$ contained in $L$, constructs an automaton accepting the set $M_h(K)$ of all the $L$-representatives of the elements of $\mu(Kh)$.
\end{lemma}

\begin{proof}
For each $a\in \tilde A$, let $\CA_a$ be the $a$-multiplyer automaton given by the automatic structure of $G$, which accepts the language of all words on alphabet $(\tilde A\cup\{\square\})^2$ of the form $(u\square^k,u\square^\ell)$, where $|u|+k = |v|+\ell$, $\min(k,\ell) = 0$, $u,v\in L$ and $\ov v = \ov{ua}$ (see Section~\ref{sec: automatic groups}).

We first extend these automata to allow arbitrary padding by $\square$ symbols at the end of accepted sequences: we add to each $\CA_a$ a state $s_a$, $(\square,\square)$-labeled edges from the accepting states to $s_a$, and a $(\square,\square)$-labeled loop at $s_a$. We let $Q_a$ be the state set of the resulting automaton.

Now let $\CA$ be a deterministic and complete automaton accepting $K$. Again, we extend $\CA$ by adding a state $s$, $\square$-labeled edges from the accepting states to $s$ and a $\square$-labeled loop at $s$. Let $Q$ be the resulting state set.

Now let $h = a_1\cdots a_n\in \tilde A^*$. We construct an automaton $\CB$ with state set $Q\times Q_{a_1} \times \cdots \times Q_{a_n}$ as follows. For each $\vec b = (b_0,b_1,\ldots, b_n) \in (\tilde A\cup\{\square\})^{n+1}$, $\CB$ has a $\vec b$-labeled edge from $(q_0, q_1, \ldots, q_n)$ to $(q'_0, q'_1, \ldots, q'_n)$ if $q_0\transition{b_0} q'_0$ and if $q_i\transition{(b_{i-1},b_i)} q'_i$ in $\CA_{a_i}$, for each $1\le i\le n$.

Let $u_0,u_1,\ldots, u_n$ be words in $\tilde A^*$ and let $\vec u \in ((\tilde A\cup\{\square\})^{n+1})^*$ be a tuple consisting of these words, padded on the right with sufficiently many $\square$ symbols. Then $\vec u$ labels a path from the tuple of initial states to the state $(s, s_{a_1}, \ldots, s_{a_n})$ if and only if each component of $\vec u$ ends with $\square$, $u_0 \in K$, and $u_i\in L$ and $\ov{u_{i-1}a_i} = \ov{u_i}$ for each $1\le i \le n$.

Let $K'$ be the language accepted by $\CB$, with final state $(s, s_{a_1}, \ldots, s_{a_n})$. To obtain $M_h(K)$, we need to first project $K'$ onto its last component $K''$, and then take the image of $K''$ in the morphism that fixes every letter of $\tilde A$ and maps $\square$ to the empty word.

Let now $\CB'$ be the (non deterministic) automaton obtained from $\CB$ by deleting the $n$ first components of the transition labels: then $\CB'$ accepts the language $K''$. And an automaton for $M_h(K)$ is obtained from $\CB'$ by deleting the $\square$-labeled transitions, and choosing as final, the states from which $(s, s_{a_1}, \ldots, s_{a_n})$ can be reached in $\CB'$ using only $\square$-labeled transitions.
\end{proof}

We can now establish the announced statement.

\begin{prop}\label{prop: decide weak Stallings graph}
There is an algorithm which, given an automatic structure for a group $G$ with set of representatives $L$, given a tuple $(h_1, \ldots, h_k)$ of reduced words and given a finite reduced rooted graph $(\Gamma,1)$, decides whether $(\Gamma,1)$ is Stallings-like for $\langle h_1, \ldots, h_k\rangle$ with respect to $L$.
\end{prop}

\preuve
Let $h_1, \ldots, h_s \in F(A)$, let $H$ be the subgroup of $G$ generated by the $h_i$ and let $K$ be the language of words in $L$ accepted by $(\Gamma,1)$. Then $K$ is rational by construction. Observe that $\Gamma$ is Stallings-like  for $H$ if and only if, for each $u\in K$ and for each $1\le i \le s$, $K$ accepts all the $L$-representatives of $\overline{uh_j}$ and $\overline{uh_j\inv}$.

Lemma~\ref{lemma: compute multiplyer} shows how to find automata for the sets of $L$-representatives of the elements of $\mu(Kh_i)$ and $\mu(Kh_i\inv)$ ($1 \le i \le s$). And it is a classical automata-theoretic algorithm to decide whether a rational language is contained in another: concretely, let $\CA$ be a finite automaton (possibly non-deterministic) with state set $P$ and let $\CB$ be a complete deterministic finite automaton with state $Q$. Then the language accepted by $\CA$ is contained in the language accepted by $\CB$ if and only if, in the product automaton (with state set $P \times Q$), one cannot reach a vertex of the form $(p,q)$ where $p$ is final in $\CA$ and $q$ is not final in $\CB$. This is decided by a breadth-first search of the graph underlying the product automaton.
\eop

\begin{cor}\label{quasi-convex implies weak Stallings graph}
There is a partial algorithm which, given an automatic structure for a group $G$ with language of representatives $L$, and given finitely many reduced words generating a subgroup $H$ of $G$, stops and computes a Stallings-like graph for $H$ if $H$ is $L$-quasi-convex, and never stops if $H$ is not $L$-quasi-convex.
\end{cor}

\preuve
This is a direct application of Corollary~\ref{cor: eventually a weak Stallings graph} and Proposition~\ref{prop: decide weak Stallings graph}.
\eop

\begin{remark}\label{rk: difference with Kapovich}
Kapovich \cite{1996:Kapovich} already established Corollary~\ref{quasi-convex implies weak Stallings graph}, through a different algorithm. In Corollary~\ref{cor: membership problem for quasi-convex}, we draw the same conclusion that he did on the generalized membership problem for $L$-quasi-convex subgroups. It is included here for the sake of completeness. In Section~\ref{sec: computing}, we show how this leads to the computation of  a Stallings graph.
\end{remark}

Let $G$ be an $A$-generated group, equipped with a rational structure $(G,L)$. We say that \emph{$L$-representatives are computable in $G$} if there is an algorithm which, given a reduced word $u\in F(A)$, computes an $L$-representative of $\mu(u)$. We say that \emph{Stallings-like graphs} (resp. \emph{Stallings graphs}) \emph{are computable for $L$-quasi-convex subgroups in $G$} if there is a partial algorithm which, given finitely many reduced words in $F(A)$ generating a subgroup $H$ of $G$, stops and computes a rooted $A$-labeled graph which is Stallings-like for $H$ with respect to $L$ if $H$ is $L$-quasi-convex, and never stops if $H$ is not $L$-quasi-convex.

The \emph{generalized membership problem for subgroups} in $G$ is the following: given finitely many reduced words $h_1,\ldots, h_s, h$, does $h$ belong to the subgroup of $G$ generated by $h_1,\ldots, h_s$? We say that the \emph{generalized membership problem is decidable for $L$-quasi-convex subgroups} in $G$ if there is a partial algorithm which, given the reduced words $h_1,\ldots, h_s, h$, stops and decides whether $h$ belongs to the subgroup $H$ generated by $h_1,\ldots, h_s$, if $H$ is $L$-quasi-convex subgroup, and never stops if $H$ is not $L$-quasi-convex.

We first note the following elementary lemma.

\begin{lemma}\label{lm: weak Stallings graph implies membership problem}
Let $(G,L)$ be a rational structure for $G$ such that $L$-repres\-ent\-atives are computable. Let $H$ be a subgroup of $G$ and let $(\Gamma,1)$ be Stallings-like for $H$ with respect to $L$. There is an algorithm which, given a reduced word $h$, decides whether $\mu(h) \in H$.
\end{lemma}

\preuve
Let $u$ be an $L$-representative of $h$. By definition of a Stallings like graph, $\mu(h) \in H$ if and only if $u$ labels a loop at the base vertex in $\Gamma$. The result follows since we can compute $u$ and check whether $u$ labels a loop at vertex 1.
\eop

Corollary~\ref{quasi-convex implies weak Stallings graph} and Lemma~\ref{lm: weak Stallings graph implies membership problem} directly yield a proof of the following result, first established by Kapovich \cite{1996:Kapovich}.

\begin{cor}\label{cor: membership problem for quasi-convex}
Let $(G,L)$ be a rational structure for $G$, and suppose that $L$-representatives and Stallings-like graphs for $L$-quasi-convex subgroups are computable. Then the generalized membership problem for $L$-quasi convex subgroups is decidable in $G$.

If the rational structure of $G$ is determined by an automatic structure, then the (partial) algorithm solving the generalized membership problem for $L$-quasi convex subgroups is uniform in the automatic structure (that is: there is a partial algorithm which, given an automatic structure for $G$ inducing the rational structure $(G,L)$ and given reduced words $h_1,\ldots, h_s, h$, stops and decides whether $h$ belongs to the subgroup $H$ generated by $h_1,\ldots, h_s$, if $H$ is $L$-quasi-convex subgroup, and never stops if $H$ is not $L$-quasi-convex.
\end{cor}

\subsection{Computing Stallings graphs}\label{sec: computing}

Let $G$ be a group equipped with a rational structure $(G,L)$, where $L$-repr\-es\-ent\-atives are computable. Let $H\le G$ be a quasi-convex subgroup. We saw in Section~\ref{sec: computing a Stallings-like graph} how to compute a Stallings-like graph for $H$ with respect to $L$, under appropriate hypotheses. In this section, we assume that we are given such a graph $(\Gamma,1)$.

By Proposition~\ref{prop: Stallings minimal}, there exists a morphism of labeled rooted graphs $\phi$ from $(\Gamma,1)$ to $(\Schreier(G,H),H)$. The proof of that proposition, together with the solution of the membership problem in $H$ (Lemma~\ref{lm: weak Stallings graph implies membership problem}) actually gives an effective way to compute $\phi(\Gamma)$: if $u_p$ is the label of a path from 1 to vertex $p$ in $\Gamma$, it suffices to identify in $\Gamma$ vertices $p$ and $q$ whenever $\mu(u_pu_q\inv) \in H$, and fold.

The resulting reduced rooted graph, $(\phi(\Gamma),H)$ is again Stallings-like for $H$ with respect to $L$. Indeed, as a subgraph of $\Schreier(G,H)$, its loops at $H$ are all in $\mu\inv(H)$. Moreover, since every $L$-representative of an element of $H$ labels a loop at 1 in $\Gamma$, it also labels a loop at $H$ in $\phi(\Gamma)$.

In particular, $\phi(\Gamma)$ contains $\Gamma_L(H)$. In order to identify $\Gamma_L(H)$, it suffices to verify, for each vertex $p$ of $\phi(\Gamma)$, whether removing it still yields a Stallings-like graph. Proposition~\ref{prop: Stallings minimal} shows that $\Gamma_L(H)$ consists of all the indispensable vertices.

This can be summarized in the following statement.

\begin{prop}
Let $(G,L)$ be a rational structure for $G$ for which $L$-repres\-entatives are computable. If Stallings-like graphs for $L$-quasi-convex subgroups are computable in $G$, then so are Stallings graphs for $L$-quasi-convex subgroups.
\end{prop}

\begin{remark}
If we already have a (fast) algorithm to solve the membership problem in $H$, we can build $\Schreier(G,H)$ edge by edge: we start from the single vertex $H$; then for each graph $\Gamma$ that is constructed, for each vertex $p$ and word $u_p$ labeling a path in $\Gamma$ from $H$ to $p$, and for each letter $a\in \tilde A$, decide whether $u_pau_q\inv \in H$ for some vertex $q$: if so, add an $a$-labeled edge from $p$ to $q$ -- if not, add a new vertex and an $a$-labeled edge from $p$ to that vertex.

We can then run the completion process of Section~\ref{sec: completion process} directly in the graph $\Schreier(G,H)$. We first construct the fragment $\Delta_0$ of $\Schreier(G,H)$ spanned by the loops at $H$ labeled by the generators $h_1,\ldots, h_k$ of $H$. Once a graph $\Delta_i$ is built (within $\Schreier(G,H)$), the graph $\Delta_{i+1}$ is obtained from $\Delta_i$ by reading every relator and every letter at every vertex (still within $\Schreier(G,H)$). It is easily verified that $(\Delta_i,H)$ is the image of $(\Gamma_i,1)$ by the natural morphism (of Proposition~\ref{prop: Stallings minimal}). 

As in Section~\ref{sec: computing a Stallings-like graph}, we can test every $\Delta_i$, to see whether it is Stallings-like for $H$ with respect to $L$.
\end{remark}

\section{Complexity issues}\label{sec: complexity}

In evaluating the complexity of our algorithms, we consider the group $G$ as fixed, and in particular, we have direct access to a finite presentation, an automatic structure with set of representatives $L$, etc.

Our input is the tuple $(h_1,\ldots, h_s)$ of reduced words, such that $H$ is generated by the $\mu(h_i)$. We let $n$ be the sum of the lengths of the $h_i$.

We distinguish in the algorithm several steps, whose time complexity needs to be evaluated. We discuss below:

\begin{itemize}
\item[(a)] the time required to produce $\Gamma_0$, and to produce $\Gamma_{i+1}$ from $\Gamma_i$;

\item[(b)] the time to verify whether $(\Gamma_i,1)$ is Stallings-like for $H$ with respect to $L$; 

\item[(c)] the time required to produce $\Gamma(H)$ from a Stallings-like $\Gamma_i$;

\item [(d)] the number of iterations of the first phase of the algorithm (that is, the number $i$ such that $(\Gamma_i,1)$ is Stallings-like for $H$ with respect to $L$).
\end{itemize}
Let $\rho$ be the sum of the length of the relators of $G$ (the elements of $R$). In our setting, $\rho$ is a constant.

(a) --- It is known that $\Gamma_0$ is produced in time $\O(n\log^*n)$ \cite{2006:Touikan} (that is: in almost linear time). If $\Gamma_i$ has $N$ vertices, then $\Gamma_{i+1}$ has at most $\rho N$ vertices, and it is constructed in time $\O(\rho N \log^*(\rho N))$.

In particular, if $i \ge 1$, then $\Gamma_i$ has at most $\rho^i n$ vertices and the total time to construct it is $\O(i\rho^in\log^*(\rho^in))$.

(b) --- We use the algorithm in the proof of Proposition~\ref{prop: decide weak Stallings graph} to decide whether a reduced rooted graph $(\Gamma,1)$ is Stallings-like for $H$ with respect to $L$. Let $N$ be the number of vertices of $\Gamma$ and let $K$ be the language of all elements of $L$ accepted by $(\Gamma,1)$. An automaton $\CA$ for $K$ is obtained by taking the product of $(\Gamma,1)$ and an automaton accepting $L$ (provided by the automatic structure of $G$): this is computed in time linear in $N$. For each $j \le k$ and $\epsilon \in \{\pm1\}$, we need to perform a breadth-first search on the product automaton of $\CA$  and the automaton accepting $\mu(Kh_j^{\epsilon})$. This product automaton has a number of vertices bounded by some $cN d^{m}$, where $c$ and $d$ are constants (depending on the automatic structure of $G$) and $m = \max_j|h_j|$.  Now breadth-first search takes polynomial time in the number of vertices (in fact time $\O(\ell |A|)$ on an $\ell$-vertex automaton over alphabet $A$ \cite{2009:CormenLeisersonRivest}).  Therefore deciding whether $(\Gamma,1)$ is Stallings-like for $H$ takes time polynomial in $N$ and exponential in $n$.

In particular, the total time required to construct $(\Gamma_i,1)$ and to decide whether it is Stallings-like for $H$, is exponential in $i$ and in $n$.

Note that the exponential dependency in $n$, even with $i$ fixed, is entirely due to the usage of the automatic structure, to verify whether $\Gamma_i$ is Stallings-like.

(c) --- Suppose that we have found out that $(\Gamma,1)$, a reduced rooted graph with $N$ vertices, is Stallings-like for $H$ with respect to $L$. The next step of the computation is to compute the image of $\Gamma$ in $\Schreier(G,H)$ in the canonical graph morphism $\phi$ that maps the base vertex 1 to vertex $H$. This requires:

-  computing for each vertex $p$ of $\Gamma$ a word $u_p$ labeling a path from 1 to $p$: this is done in time $\O(N|A|)$ (again a breadth-first search of the graph $\Gamma$);

- for each pair of vertices $(p,q)$, to decide whether $\mu(u_pu_q\inv) \in H$: this is done by first computing an $L$-representative for $u_pu_q\inv$ (in quadratic time in the length of that word \cite[Thm 2.3.10]{1992:EpsteinCannonHolt}), and then running this $L$-representative in $\Gamma$ (in linear time). Note that $u_pu_q\inv$ has length at most $2k$, where $k$ is the maximal distance from 1 to a vertex in $\Gamma$ ($k$ is a constant of $L$-quasi-convexity for $H$). The total time required is polynomial in $N$.

Once $\phi(\Gamma)$ is computed, we need to verify, for each vertex $p$ of $\phi(\Gamma)$, whether the reduced rooted graph obtained from $\phi(\Gamma)$ by deleting $p$, is still Stallings-like for $H$ with respect to $L$. As discussed in item (b), this takes time polynomial in $N$ and exponential in $n$.

(d) --- The number of iterations of the first phase of the algorithm, that is, the number $i$ such that $(\Gamma_i,1)$ is Stallings-like for $H$ with respect to $L$, can be very large. In fact, we have the following result.

\begin{prop}
Let $G$ be an $A$-generated hyperbolic group, let $h_1,\ldots,h_s \in F(A)$ and let $H = \langle \ov h_1,\cdots, \ov h_s\rangle \le G$ be a quasi-convex subgroup. There exists no computable function of the length of the $h_i$ and $G$, bounding the number of iterations in the first phase of the algorithm to construct the Stallings graph of $H$ with respect to $L$.
\end{prop}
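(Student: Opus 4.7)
The plan is to argue by contradiction, reducing to an undecidability phenomenon the paper has already flagged in the Remark of Section~\ref{se:intro-hyp}: no computable function of the presentation size bounds the time required by a partial algorithm to certify hyperbolicity and output a hyperbolicity constant, because hyperbolicity is a Markov property. The goal is to show that a computable bound $f(n,|G|)$ on the iteration count would yield such a forbidden computable bound, where $n=\sum_i|h_i|$ and $|G|$ measures the size of a finite presentation of $G$.

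The first step is the observation that $f$, together with Proposition~\ref{prop: decide weak Stallings graph}, provides a uniform effective procedure: for any input $(G,h_1,\dots,h_s)$, run the completion process for $f(n,|G|)$ iterations and test whether $\Gamma_{f(n,|G|)}$ is Stallings-like for $H=\langle\overline{h_1},\dots,\overline{h_s}\rangle$. If it is, then $\Gamma_L(H)$ is a quotient of this finite graph (Proposition~\ref{prop: Stallings minimal}), hence finite, so $H$ is quasi-convex by Proposition~\ref{prop: Stallings graphs of quasi-convex}; if it is not, then by the assumed bound $H$ cannot be quasi-convex. In particular, $f$ yields a decision algorithm for quasi-convexity, and for quasi-convex $H$ one can extract $\Gamma_L(H)$ as in Section~\ref{sec: computing} and, by Corollary~\ref{cor: compute constant}, compute its least $L$-quasi-convexity constant -- which in fact is bounded above directly by $f(n,|G|)+\max_j|h_j|$, since every vertex of $\Gamma_{f(n,|G|)}$, and hence of its projection to the Schreier graph, sits within that distance from the base.

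The second step is to exploit this computable bound on quasi-convexity constants to produce a computable bound on the hyperbolicity constant $\delta(G)$ in terms of $|G|$. I would apply the previous step to a uniformly chosen family of quasi-convex subgroups, for example the cyclic subgroups generated by each $a\in A$, which are always quasi-convex in a hyperbolic group. The Morse stability lemma gives a two-way relation between the quasi-convexity constants of such subgroups and $\delta(G)$, and a coarse-geometric argument in the spirit of the local-to-global criteria characterizing hyperbolicity would upgrade a uniform upper bound on these quasi-convexity constants into a uniform upper bound on $\delta(G)$. Combining with the first step produces a computable function of $|G|$ bounding $\delta(G)$, contradicting the Markov-style argument of Section~\ref{se:intro-hyp}.

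The hardest part will be the last implication, since translating control of cyclic quasi-convexity constants into control of $\delta(G)$ is geometrically subtle. If this route turns out to be too delicate, an alternative and arguably cleaner path is to invoke Rips's construction \cite{1982:Rips}, which produces a hyperbolic group containing a finitely generated subgroup with unsolvable membership problem. Since quasi-convex subgroups of a hyperbolic group have solvable membership (Corollary~\ref{membership problem for quasi-convex}), this subgroup is not quasi-convex, and one can derive from it a recursively enumerable family of generating tuples whose quasi-convexity is undecidable -- directly contradicting the decision procedure obtained in the first step.
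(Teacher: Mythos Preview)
Your ``alternative path'' at the end is exactly the paper's proof, and it is the right one. The paper's argument is just two lines: if a computable bound on the iteration count existed, then running that many iterations and applying Proposition~\ref{prop: decide weak Stallings graph} would decide whether $H$ is $L$-quasi-convex; but by Rips's construction there exist hyperbolic groups in which this question is undecidable. Your first step captures this correctly.

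Your primary route --- the ``second step'' --- has a genuine gap. You claim that a uniform bound on the quasi-convexity constants of the cyclic subgroups $\langle a\rangle$, $a\in A$, yields a computable bound on the hyperbolicity constant $\delta(G)$ via ``a coarse-geometric argument in the spirit of the local-to-global criteria.'' No such implication is available in the literature, and there is no reason to expect it: the Morse lemma goes the other way (it bounds cyclic quasi-convexity in terms of $\delta$), and controlling how geodesics between powers of generators fellow-travel the corresponding cyclic subgroups says nothing about thinness of arbitrary geodesic triangles. You yourself flag this step as ``geometrically subtle''; in fact it is simply unproven, and the argument does not close without it. Moreover, this detour through varying $G$ is unnecessary: the statement (and the paper's setup in Section~\ref{sec: complexity}) treats $G$ as fixed, so the contradiction should come from a single hyperbolic group, which is precisely what the Rips-based alternative delivers.

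So: drop the second step, promote the alternative to the main argument, and you have the paper's proof.
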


\preuve
If there was such a computable function, we could run the iteration the number of times prescribed by the function, and be guaranteed that the resulting reduced rooted graph $(\Gamma_i,1)$ is Stallings-like for $H$ with respect to $L$, if ever $H$ is $L$-quasi-convex. We could then decide whether $(\Gamma_i,1)$ is indeed Stallings-like for $H$, thus deciding whether $H$ is $L$-quasi-convex.

Yet, there exist hyperbolic subgroups in which it is undecidable whether a given tuple of words generates a quasi-convex subgroup \cite{1982:Rips}. This concludes the proof.
\eop

\section{Applications to algorithmic problems}\label{sec: algorithmic problems}

In this section, we collect a number of applications of our results on the computability of Stallings-like and Stallings graphs for $L$-quasi-convex subgroups. Many of our results have the same structure as Corollary~\ref{cor: membership problem for quasi-convex}: the statement asserts the existence of a partial algorithm $\bbA_{G,L}$ solving a certain problem $\bbP$ on input $X$, relative to a rational structure $(G,L)$ and under certain algorithmic hypothesis (\textit{e.g.}, assuming that Stallings-like graphs are computable for $L$-quasi-convex subgroups). The algorithm is partial in the sense that it terminates and solves the problem $\bbP$ exactly if the subgroups of $G$ determined by input $X$ are $L$-quasi-convex, and it runs forever otherwise. For brevity, we will say that \emph{$\bbA_{G,L}$ is a partial algorithm solving $\bbP$ for $L$-quasi-convex subgroups}.

The statement then asserts that if the rational structure $(G,L)$ is induced by an automatic structure, then this algorithm is \emph{uniform in the automatic structure} in the sense that there exists a single (partial) algorithm which computes $Y$ when given both the automatic structure for $G$ and $X$ as an input. Putting it differently, it means that there exists an algorithm which, given an automatic structure for the group $G$ with language of representatives $L$, outputs the algorithm $\bbA_{G,L}$. Again for brevity, we will simply use the phrase \emph{uniform in the automatic structure}, without re-explaining its meaning in every statement.

\subsection{Membership problem}\label{sec: membership problem}


We first recall Corollaries~\ref{cor: compute constant} and~\ref{cor: membership problem for quasi-convex} (Kapovich \cite{1996:Kapovich}, see also Remark~\ref{rk: least constant of quasi-convexity}).

The \emph{containment} (resp. \emph{equality}) \emph{problem} for subgroups of $G$ is the following: given finite tuples of reduced words generating subgroups $H$ and $K$ of $G$, decide whether $H \le K$ (resp. $H = K$). The \emph{least $L$-quasi-convexity constant problem} is the following: given a finite tuple of words generating a subgroup $H$, compute the least constant of $L$-quasi-convexity of $H$ (this problem makes sense only if $H$ is $L$-quasi-constant).

\begin{prop}\label{prop: membership problem}
Let $G$ be an $A$-generated group, equipped with a rational structure $(G,L)$, in which one can compute $L$-representatives and Stallings-like graphs for $L$-quasi-convex subgroups. 

There are partial algorithms which solve the generalized membership, the containment, the equality and the least constant of $L$-quasi-convexity problems for $L$-quasi-convex subgroups of $G$.
%
%
 
If the rational structure $(G,L)$ is induced by an automatic structure, then these algorithms are uniform in the automatic structure.
\end{prop}

\preuve
The only addition to Corollaries~\ref{cor: compute constant} and~\ref{cor: membership problem for quasi-convex} is the statement on the containment and the equality problems. Given reduced words $h_1, \ldots, h_s, k_1, \ldots, k_t$ in $F(A)$ such that the $h_i$ (resp. the $k_j$) generate an $L$-quasi-convex subgroup $H$ (resp. $K$), one asks whether $H\le K$, or whether $H = K$. The containment problem trivially reduces to the generalized membership problem (applied to $h_i, k_1, \ldots, k_t$ for every $i$), and the equality problem reduces to the containment problem.
\eop

This leads directly to the solution of problems relative to cosets and double cosets. For the latter, we need an additional hypothesis on $L$, which is verified for instance when $L = \LG$.

The \emph{left} (resp. \emph{right}, \emph{double}) \emph{coset computation problem} in $G$ takes as input a reduced word $u$ and a finite tuple of reduced words generating a subgroup $H$ of $G$ (resp. two finite tuples, respectively generating subgroups $H$ and $K$), and outputs an automaton accepting the set of all $L$-representatives of the elements of $H\mu(u)$ (resp. $\mu(u)H$, $H\mu(u)K$). The \emph{double coset equality problem} takes as inputs reduced words $u$ and $v$ and tuples of reduced words generating $H$ and $K$, and decides whether $H\mu(u)K = H\mu(v)K$.

\begin{prop}\label{prop: double coset membership}
Let $G$ be an automatic group and let $(G,L)$ be the resulting rational structure in which $L$-representatives and Stallings-like graphs for $L$-quasi-convex subgroups are computable. Suppose that $L$ is closed under taking inverses.

There are partial algorithms which solve the left, right and double coset computation problems, and the double coset equality problem for $L$-quasi-convex subgroups of $G$.

%
If the rational structure $(G,L)$ is induced by an automatic structure, then these algorithms are uniform in the automatic structure.
\end{prop}

\begin{proof}
Let $u, h_1, \ldots, h_s$ be reduced words and let $H = \langle h_1,\ldots,h_s\rangle$. The partial algorithm in Corollary~\ref{quasi-convex implies weak Stallings graph} computes, exactly if $H$ is $L$-quasi-convex, a reduced rooted graph $(\Gamma,1)$ which is Stallings-like for $H$ with respect to $L$. By definition, the language $\calL(\Gamma)$ accepted by $(\Gamma,1)$ (the set of reduced words which label a loop at vertex 1 in $\Gamma$) contains $L \cap \mu\inv(H)$, and is contained in $\mu\inv(H)$.

Taking the product of $(\Gamma,1)$ with an automaton accepting $L$ yields an automaton accepting exactly the language $\calL(\Gamma) \cap L = L \cap \mu\inv(H)$. We now use Lemma~\ref{lemma: compute multiplyer} to produce an automaton accepting all the $L$-representatives of $\mu((L \cap \mu\inv(H))u) = H\mu(u)$.

By the same process, we can compute an automaton accepting the set of $L$-representatives of $K\mu(u\inv)$. Exchanging the roles of the initial and accepting states in this automaton, and replacing each $a$-labeled transition from state $p$ to state $q$ by an $a\inv$-labeled transition from $q$ to $p$ (for each $a\in \tilde A$) yields a new automaton accepting the set of all $L$-representatives of the inverses of the elements of $K\mu(u\inv)$, that is, the $L$-representatives of the elements of $\mu(u)K$ (this is where we use the assumption that $L = L\inv$).

We now observe that $\mu(v) \in H\mu(u)K$ if and only if $H\mu(v) \cap \mu(u)K \ne \emptyset$, if and only if there is a word which is both an $L$-representative of an element of $H\mu(v)$ and an $L$-representative of an element of $\mu(u)K$. Since we have automata accepting both these sets of $L$-representatives, it is easy to compute an automaton for their intersection, and to verify whether that intersection is non-empty, thus deciding whether $\mu(v) \in H\mu(u)K$.

We conclude after noting that $H\mu(u)K = H\mu(v)K$ if and only if $\mu(u) \in H\mu(v)K$ and $\mu(v) \in H\mu(u)K$.
\end{proof}

\subsection{Finiteness problem}\label{sec: finiteness problem}

Here we consider the \emph{finiteness problem} for subgroups of $G$: given a tuple of reduced words generating a subgroup $H$ of $G$, decide whether $H$ is finite. Again we need a reasonably mild hypothesis on the language $L$ of group representatives (satisfied for instance if $L \subseteq \LG$).

\begin{prop}\label{prop: decide finiteness}
Let $(G,L)$ be a rational structure $(G,L)$ for the group $G$, for which Stallings-like graphs for $L$-quasi-convex subgroups. We also assume that each element of $G$ has a finite number of $L$-representatives. Then there is a partial algorithm which solves the finiteness problem for $L$-quasi-convex subgroups of $G$.

If the rational structure $(G,L)$ is induced by an automatic structure, then this algorithm is uniform in the automatic structure.
\end{prop}

\preuve
As in the proof of Proposition~\ref{prop: double coset membership}, we can construct an automaton accepting the language $L \cap \mu\inv(H)$ (if $H$ is $L$-quasi-convex). Under our hypothesis on the rational structure $(G,L)$, $H$ is finite if and only if $L \cap \mu\inv(H)$ is finite. And it is a classic result of automata theory that one can decide whether the language accepted by a finite automaton, is finite.
\eop

\subsection{Intersection of subgroups}\label{sec: intersection}

The \emph{intersection problem} for subgroups of $G$ takes as input finite tuples of reduced words, generating respectively subgroups $H$ and $K$ of $G$, and outputs a set of generators for the subgroup $H \cap K$. This makes sense only if $H \cap K$ is finitely generated.

Howson's theorem states that the intersection of two finitely generated subgroups of free groups is finitely generated. It is an immediate consequence of the automata-theoretic characterization of $L$-quasi-convexity (Proposition~\ref{prop: Stallings graphs of quasi-convex}) that the intersection of two finitely generated $L$-quasi-convex subgroups is again finitely generated and $L$-quasi-convex (Short \cite{1991:Short}). The following statement follows directly from the computability of Stallings-like graphs for $L$-quasi-convex subgroups. 

\begin{prop}\label{rs}
Let $(G,L)$ be a rational structure for the group $G$, where Stallings-like graphs for $L$-quasi-convex subgroups are computable. There is a partial algorithm which solves the intersection problem for $L$-quasi-convex subgroups of $G$.

If the rational structure $(G,L)$ is induced by an automatic structure, then this algorithm is uniform in the automatic structure.
\end{prop}

\preuve
We have a partial algorithm computing Stallings-like graphs $(\Gamma_H,1)$ and $(\Gamma_K,1)$ for $H$ and $K$, if these subgroups are $L$-quasi-convex. It is easily verified the product graph $\Gamma_H \times_A \Gamma_K$ (or rather, the connected component $\Delta$ of this graph, containing the pair of base vertices) is a reduced rooted graph which is Stallings-like for $H\cap K$. Then $\Delta$ is the Stallings graph of a subgroup of $F(A)$ whose image in $G$ is $H \cap K$. Applying the algorithm in Stallings \cite{1983:Stallings} (namely, constructing a spanning tree of $\Delta$ and producing a generator for each edge of $\Delta$ not in that tree), we compute a generating set of $H \cap K$.
\eop

\begin{remark}\label{rk: rs}
We can also solve the intersection problem for $L$-quasi-convex subgroups using automata accepting $L \cap \mu\inv(H)$ and $L \cap \mu\inv(K)$ (as produced by \cite{1996:Kapovich} for instance): Stallings foldings applied to the product automaton (which accepts $L \cap \mu\inv(H) \cap \mu\inv(K) = L \cap \mu\inv(H\cap K)$), produces the Stallings graph of a subgroup of $F(A)$ which maps to $H\cap K$ in $G$. In other words, a combination of \cite{1996:Kapovich} and \cite{1991:Short} solves the problem. Starting from Stallings-like graphs avoids having the folding stage.

In this context, we note Friedl and Wilton's result on the computability of certain intersections of subgroups of surface groups \cite{2016:FriedlWilton} (Proposition 27, see also Question 30).

Finally, we note that if we start from the Stallings graphs of $H$ and $K$, then the product graph embeds in the Schreier graph of $H \cap K$, and contains the Stallings graph of $H\cap K$.
\end{remark}

\begin{example}\label{ex: computing intersection}
Let $G$ be the modular group, $G = \langle a, b\mid a^2, b^3\rangle$ and $L = \LG$, as in Example~\ref{ex: Stallings graph in PSL}. Let $H = \langle abab\inv, babab\rangle$ and $K = \langle abab, babab\inv\rangle$. Then $H\cap K = \langle (abab\inv)^2, (ab)^3(ab\inv)^3, (ab)^6\rangle$. Figure~\ref{fig: computing intersection} shows the Stallings graphs of $H$ and $K$, each with base vertex 1, as well as their direct product. The latter, with base vertex $(1,1)$, is Stallings-like for $H \cap K$. The Stallings graph of $H$ already appeared in Figure~\ref{fig: Stallings graph in PSL}.
\begin{figure}[htbp]
\centering
\begin{picture}(100,102)(0,-100)
\gasset{Nw=4,Nh=4}
\node(n0)(0.0,-0.0){$1$}

\node(n1)(6.0,-14.0){2}

\node(n2)(0.0,-28.0){3}

\node(n3)(34,-0.0){4}

\node(n4)(28,-14.0){5}

\node(n5)(34.0,-28.0){6}

\drawedge(n0,n1){$b$}

\drawedge(n1,n2){$b$}

\drawedge(n2,n0){$b$}

\drawedge[ELside=r](n3,n4){$b$}

\drawedge[ELside=r](n4,n5){$b$}

\drawedge[ELside=r](n5,n3){$b$}

\drawedge[curvedepth=2.0](n0,n3){$a$}

\drawedge[curvedepth=2.0](n3,n0){$a$}

\drawedge[curvedepth=2.0](n1,n4){$a$}

\drawedge[curvedepth=2.0](n4,n1){$a$}

\drawedge[curvedepth=2.0](n2,n5){$a$}

\drawedge[curvedepth=2.0](n5,n2){$a$}
\node(m1)(72.0,-4.0){$1$}

\node(m2)(78.0,-14.0){2}

\node(m3)(72.0,-24.0){3}

\node(m4)(52,-4.0){4}

\node(m5)(52,-24.0){5}

\node(m6)(98.0,-14.0){6}

\drawedge(m1,m2){$b$}

\drawedge(m2,m3){$b$}

\drawedge(m3,m1){$b$}

\drawedge[curvedepth=2.0](m1,m4){$a$}
\drawedge[curvedepth=2.0](m4,m1){$a$}

\drawedge[ELside=r](m4,m5){$b$}

\drawedge[curvedepth=2.0](m5,m3){$a$}
\drawedge[curvedepth=2.0](m3,m5){$a$}

\drawedge[curvedepth=2.0](m2,m6){$a$}
\drawedge[curvedepth=2.0](m6,m2){$a$}

\drawloop(m6){$b$}
\node(h11)(22.0,-44.0){$\scriptstyle 1,1$}
\node(h22)(22.0,-64.0){$\scriptstyle 2,2$}
\node(h33)(2.0,-64.0){$\scriptstyle 3,3$}

\node(h23)(80.0,-44.0){$\scriptstyle 2,3$}
\node(h31)(100.0,-64.0){$\scriptstyle 3,1$}
\node(h12)(80.0,-64.0){$\scriptstyle 1,2$}

\node(h56)(41.0,-54.0){$\scriptstyle 5,6$}
\node(h46)(61.0,-54.0){$\scriptstyle 4,6$}
\node(h66)(51.0,-70.0){$\scriptstyle 6,6$}

\node(h13)(61.0,-100.0){$\scriptstyle 1,3$}
\node(h21)(41.0,-100.0){$\scriptstyle 2,1$}
\node(h32)(51.0,-84.0){$\scriptstyle 3,2$}

\node(h44)(41.0,-40.0){$\scriptstyle 4,4$}
\node(h55)(61.0,-40.0){$\scriptstyle 5,5$}

\node(h64)(87.0,-76.0){$\scriptstyle 6,4$}
\node(h45)(74.0,-88.0){$\scriptstyle 4,5$}

\node(h54)(28.0,-88.0){$\scriptstyle 5,4$}
\node(h65)(15.0,-76.0){$\scriptstyle 6,5$}

\drawedge(h11,h22){$b$}
\drawedge[ELside=r](h22,h33){$b$}
\drawedge(h33,h11){$b$}

\drawedge(h23,h31){$b$}
\drawedge[ELside=r](h31,h12){$b$}
\drawedge(h12,h23){$b$}

\drawedge(h13,h21){$b$}
\drawedge(h21,h32){$b$}
\drawedge(h32,h13){$b$}

\drawedge[ELside=r](h46,h56){$b$}
\drawedge[ELside=r](h56,h66){$b$}
\drawedge[ELside=r](h66,h46){$b$}

\drawedge[curvedepth=2.0](h22,h56){$a$}
\drawedge[curvedepth=2.0](h56,h22){$a$}

\drawedge[curvedepth=2.0](h12,h46){$a$}
\drawedge[curvedepth=2.0](h46,h12){$a$}

\drawedge[curvedepth=2.0](h32,h66){$a$}
\drawedge[curvedepth=2.0](h66,h32){$a$}

\drawedge[curvedepth=2.0](h11,h44){$a$}
\drawedge[curvedepth=2.0](h44,h11){$a$}
\drawedge(h44,h55){$b$}
\drawedge[curvedepth=2.0](h55,h23){$a$}
\drawedge[curvedepth=2.0](h23,h55){$a$}

\drawedge[curvedepth=2.0](h31,h64){$a$}
\drawedge[curvedepth=2.0](h64,h31){$a$}
\drawedge(h64,h45){$b$}
\drawedge[curvedepth=2.0](h45,h13){$a$}
\drawedge[curvedepth=2.0](h13,h45){$a$}

\drawedge[curvedepth=2.0](h21,h54){$a$}
\drawedge[curvedepth=2.0](h54,h21){$a$}
\drawedge(h54,h65){$b$}
\drawedge[curvedepth=2.0](h65,h33){$a$}
\drawedge[curvedepth=2.0](h33,h65){$a$}
\end{picture}
\caption{\small Computing the intersection of two subgroups of the modular group.}\label{fig: computing intersection}
\end{figure}
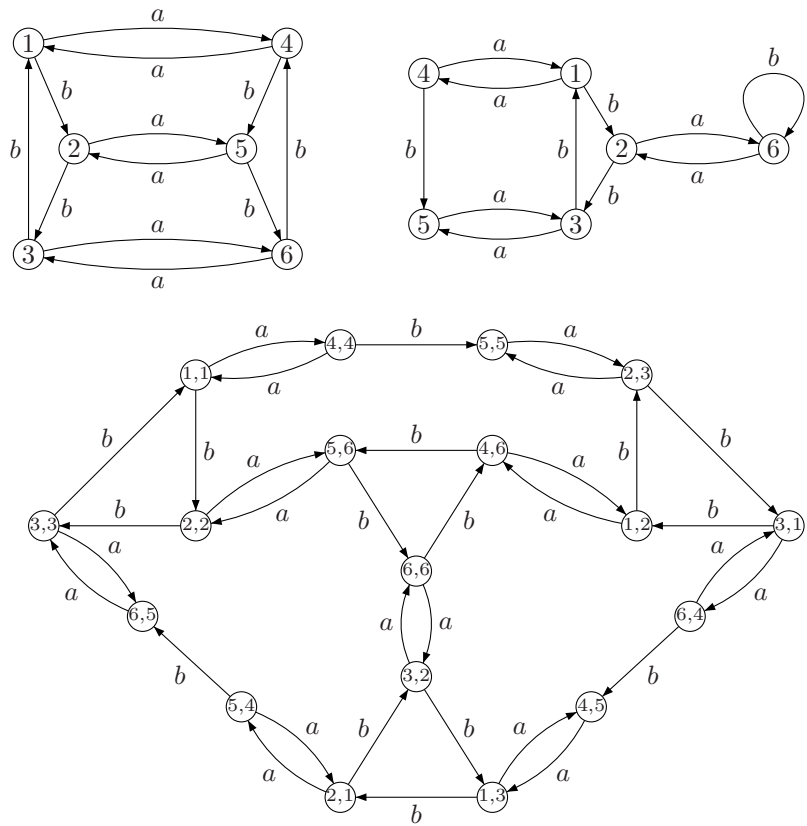
\end{example}

\subsection{Intersections of conjugates of subgroups}\label{sec: BICI}

Following \cite{1998:GitikMitraRips}, the \emph{height} of a subgroup $H$ of $G$ is the maximum number $\hauteur_G(H) = n$ of distinct cosets $Hg_1,\ldots, Hg_n$ such that $\bigcap_{i=1}^nH^{g_i}$ is infinite. The height of a quasi-convex subgroup of a hyperbolic group is finite \cite[Main Theorem]{1998:GitikMitraRips}. 

Recall that a subgroup $H$ of a group $G$ is \emph{almost malnormal} if $H^g \cap H$ is finite for every $g\not\in H$. The subgroup $H$ is \emph{malnormal} if $H^g \cap H = 1$ for every $g\not\in H$: malnormality is equivalent to almost malnormality in a torsion-free group. 

We introduce a technical hypothesis on the regular structure $(G,L)$, essentially a quantitative variant of Hruska and Wise's bounded packing property \cite{2009:HruskaWise}. Under this hypothesis, the height of an $L$-quasi-convex subgroup becomes computable, and a number of interesting problems become decidable, among which the conjugacy and the almost malnormality problem for an $L$-quasi-convex subgroup, as well as an important problem on the computation of intersection of conjugates of two given $L$-quasi-convex subgroups. 

Let $(G,L)$ be a regular structure on a group $G$ and let $\nu\colon \N\to\N$ be a non-decreasing function. We say that \emph{$(G,L)$ satisfies Property $\BP_\nu$} if, whenever $H$ and $K$ are $L$-quasi-convex subgroups of $G$ with constant of $L$-quasi-convexity $k$, if $K, g_1H,\ldots, g_nH$ are pairwise distinct and if $K \cap \bigcap_i H^{g_i}$ is infinite, then there exists an element $z\in G$ such that the ball of center $z$ and radius $\nu(k)$ meets $K$ and each of the $g_iH$.

We will see in Section~\ref{sec: conjugacy in hyperbolic} that Property $\BP_\nu$ is satisfied in particular by the geodesic structure in hyperbolic groups for a linear function $\nu$. In this section, we draw consequences of Property $\BP$ for automatic groups.

\begin{prop}\label{prop: BP to double cosets}
Let $(G,L)$ be a rational structure for a group $G$ and let $\nu$ be a non-decreasing function such that $(G,L)$ satisfy Property $\BP_\nu$. Let $H$ and $K$ be $L$-quasi-convex subgroups of $G$ with constant of $L$-quasi-convexity $k$. Then the following holds.
\begin{itemize}
\item[(1)] Every double coset $HgK$ such that $K \cap H^g$ is infinite, has a representative of length at most $2\nu(k)$.

\item[(2)] There exists a finite family $\calJ$ of infinite intersections $K \cap H^g$ such that any infinite intersection $K \cap H^x$ is conjugated in $K$ to an element of $\calJ$.
\end{itemize}
\end{prop}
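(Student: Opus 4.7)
The plan for (1) is to apply Property $\BP_\nu$ with $n=1$ and $g_1=g$ to the pair $H,K$. After dispatching the degenerate case $K=gH$ (which forces $g\in H$ and $K=H$, so that $1\in HgK = H$ is itself a representative of length $0$), the hypothesis that $K\cap H^g$ is infinite is exactly the premise of $\BP_\nu$, yielding a point $z\in G$ whose $\nu(k)$-ball meets both $K$ and $gH$. Picking $k_0\in K$ and $h_0\in H$ at distance at most $\nu(k)$ from $z$, the triangle inequality gives $|k_0^{-1}gh_0|=d(k_0,gh_0)\le 2\nu(k)$. Modulo standard bookkeeping (inversion, or equivalently swapping the roles of $H$ and $K$ in the application of $\BP_\nu$ using the equivalence $K\cap H^g$ infinite $\iff$ $H\cap K^{g^{-1}}$ infinite) this short element is a representative of the double coset $HgK$.

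For (2), the key group-theoretic observation is that the assignment $g\mapsto K\cap H^g$ descends to a well-defined map from the double coset space $H\backslash G/K$ to the set of $K$-conjugacy classes of subgroups of $K$: if $g' = hgk$ with $h\in H$ and $k\in K$, then $H^{g'} = H^{gk} = (H^g)^k$ (since $h$ normalizes $H$ trivially from the left), so $K\cap H^{g'} = k^{-1}(K\cap H^g)k$. In particular every $K$-conjugacy class of infinite intersections $K\cap H^x$ ($x\in G$) arises from at least one double coset of this form. By (1), each double coset with infinite intersection admits a representative in the ball of radius $2\nu(k)$ around the identity of $G$, which is finite since $G$ is finitely generated; hence only finitely many such double cosets exist. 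Selecting one representative $g_i$ for each, the finite family $\calJ=\{K\cap H^{g_i}\}$ has the required property: every infinite $K\cap H^x$ is $K$-conjugate to the member of $\calJ$ indexed by the double coset of $x$.

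The main obstacle is the descent statement in (2) linking double cosets to $K$-conjugacy classes of intersections, which is a short but delicate group-theoretic verification and is what makes the finiteness of $\calJ$ meaningful. The content of (1) reduces to invoking $\BP_\nu$ and applying the triangle inequality, with some care needed to ensure the element produced actually lies in $HgK$ (rather than in the companion double coset $KgH$ or its inverse $Hg^{-1}K$). The finiteness of balls in the Cayley graph of the finitely generated group $G$ then turns the length bound into a finite enumeration.
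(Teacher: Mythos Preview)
Your argument is correct and matches the paper's approach closely: for (1), apply $\BP_\nu$ with $n=1$ and read off a short representative via the triangle inequality; for (2), observe that the $K$-conjugacy class of $K\cap H^g$ depends only on the double coset $HgK$ (your ``descent'' statement is exactly the paper's computation $K\cap H^g=(K\cap H^{x^{-1}y})^{{r'}^{-1}}$), so that (1) bounds the number of classes. The only cosmetic difference is that the paper applies $\BP_\nu$ with $g_1=g^{-1}$ rather than $g_1=g$, so that the short element $x^{-1}y=r^{-1}gr'$ (with $r\in H$, $r'\in K$) lands directly in $HgK$ and the inversion/swapping step you flag becomes unnecessary.
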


\preuve
Let $z$ be the element of $G$ whose existence is asserted by Property $\BP_\nu$, applied with $n=1$ and $g_1 = g\inv$. Then there exist $r\in H$, $r'\in K$, $x$ and $y$ such that $|x|, |y|\le \nu(k)$, $zx = g\inv r$ and $zy = r'$. Then we have:
$$HgK = Hr\inv gr'K = Hx\inv z\inv zyK = Hx\inv yK.$$
The first result follows since $|x\inv y|\le 2\nu(k)$.
Next, we note that $g = rx\inv z\inv = rx\inv y{r'}\inv$, so
$$K\cap H^g = K \cap H^{rx\inv y{r'}\inv} = K \cap H^{x\inv y{r'}\inv} = (K \cap H^{x\inv y})^{{r'}\inv}.$$
Thus we can take, for the set $\calJ$ in the statement, the set of intersections of the form $K \cap H^s$ with $|s| \le 2\nu(k)$ which are infinite.
\eop

This yields a number of decidability or computability results. The \emph{conjugacy containment} (resp. \emph{conjugacy}) \emph{problem} for subgroups of $G$ is the following: given finite tuples of reduced words generating subgroups $H$ and $K$, respectively, decide whether $H$ contains (resp. is equal) to a conjugate of $K$. With the same input, the \emph{intersection of conjugates} problem consists in computing sets of generators for the subgroups in a finite family $\calJ$ as in Proposition~\ref{prop: BP to double cosets} (2) -- assuming that such a family exists. The \emph{height problem} computes $\hauteur_G(H)$ on input a finite tuple of reduced words generating a subgroup $H$ of $G$. The \emph{almost malnormality problem} decides, on the same input, whether $H$ is almost malnormal.

\begin{prop}\label{prop: good properties}
Let $(G,L)$ be a rational structure for a group $G$, for which $L$-representatives and Stallings-like graphs for $L$-quasi-convex subgroups are computable. Let $\nu$ be a computable, non-decreasing function, such that $(G,L)$ satisfy Property $\BP_\nu$. Then there are partial algorithms which solve the conjugacy containment problem, the conjugacy problem and the intersection of conjugates problems for $L$-quasi-convex subgroups of $G$. 

Suppose in addition that each element of $G$ has finitely many $L$-repres\-ent\-atives. There there are partial algorithms which solve the height and the almost malnormality problems for $L$-quasi-convex subgroups of $G$.

If the rational structure $(G,L)$ is induced by an automatic structure, then these algorithms are uniform in the automatic structure and in the function $\nu$.
\end{prop}

\begin{proof}
For all these problems, we first compute a constant of $L$-quasi-convexity $k$ for $H$ and $K$ (resp. for $H$), see Corollary~\ref{cor: compute constant}, and we let $B_k$ be the set of words in $L$ of length at most $2\nu(k)$. 

Note that $H$ contains a conjugate of $K$ if and only if there exists $g\in G$ such that $K \le H^g$. In that case, $H^g \cap K = K$ is infinite, so $HgK$ has an $L$-representative in $B_k$. Therefore $H$ contains a conjugate of $K$ if and only if $K^g \le H$ for some $g\in B_k$, and this can be verified by brute force: for each word $g$ of length at most $2\nu(k)$, we test each generator $x$ of $K$ to verify whether $x^g \in H$, say, by Proposition~\ref{prop: membership problem}. This takes care of the conjugacy containment problem.

Similarly, $H$ and $K$ are conjugated if and only if $H = K^g$ for some $g\in B_k$: we verify, for each $g$ of length at most $2\nu(k)$, and for each generator $x$ of $H$ (resp. $K$) whether $x^{g\inv}\in K$ (resp. $x^g\in H$).

We now turn to the intersection of conjugates problem. As indicated in the proof of Proposition~\ref{prop: BP to double cosets}, the set $\calJ$ can be taken to be the set of infinite intersections of the form $K \cap H^x$, with $|x| \le 2\nu(k)$. This can be computed by Propositions~\ref{rs} and~\ref{prop: decide finiteness}.

Recall that $\hauteur_G(H)$ is the largest $n$ for which there exist $g_1,\ldots,g_n$ such that the $Hg_i$ are pairwise distinct and $\bigcap_{i=1}^nH^{g_i}$ is infinite. Replacing $g_i$ by $g_ig_1\inv$, we see that we can assume that $g_1 = 1$. Property $\BP_\nu$, applied to $K = H$ and the tuple $(g_2,\ldots, g_n)$, states that there exists an element $z\in G$ which is within distance at most $\nu(k)$ from each of the $g_i\inv H$ ($1\le i \le n$).

Thus for every $1 \le i \le n$, there exists $h_i\in H$ and $r_i \in G$ such that $|r_i| \le \nu(k)$ and $zr_i = g_i\inv h_i$. Then $r_i\inv = h_i\inv g_iz$, $Hr_i\inv = Hg_iz$ and $H^{r_i\inv} = H^{g_iz}$. In particular, the cosets $Hr_i\inv$ are pairwise distinct, and $\bigcap_iH^{r_i\inv} = (\bigcap H^{g_i})^z$ is infinite.

So, to compute $\hauteur_G(H)$, we first compute a list $(Hx_1,\ldots,Hx_N)$ of the cosets of $H$ with a representative $x_i$ of length at most $\nu(k)$ (by deciding membership in $H$ of all products $x\inv y$ of words of at most that length, see Proposition~\ref{prop: membership problem}). We then compute all the subsets $\{i_1,\ldots,i_s\}$ of $\{1,\ldots,N\}$ for which $\bigcap_{j=1}^sH^{x_{i_j}}$ is infinite (Propositions~\ref{rs} and~\ref{prop: decide finiteness}). The height of $H$ is the maximal cardinality of such a subset.

Finally, consider the almost malnormality problem. Let $g\not\in H$. By Proposition~\ref{prop: BP to double cosets} (1), if $H \cap H^g$ is infinite, then $HgH = HxH$ with $|x|\le 2\nu(k)$, and we note that $x\not\in H$ and $H\cap H^x$ is a conjugate of $H\cap H^g$. Thus, $H$ is almost malnormal if and only if, for each $x\not\in H$ with $|x|\le 2\nu(k)$, the intersection $H \cap H^x$ is finite, which can be effectively verified by Propositions~\ref{rs} and~\ref{prop: decide finiteness}.
\end{proof}

\subsection{Intersection of conjugates of subgroups: the hyperbolic case}\label{sec: conjugacy in hyperbolic}

We apply the results of Section~\ref{sec: BICI} to the quasi-convex subgroups of hyperbolic groups. First we follow the same reasoning as in \cite[Sec. 4 and 8]{2009:HruskaWise} to prove the following result.

\begin{prop}\label{prop: hyperbolic BP_nu}
Let $G$ be a $\delta$-hyperbolic group. Then $(G,\LG)$ satisfies Property $\BP_\nu$, for the function $\nu(k) = k + 2\delta$.
\end{prop}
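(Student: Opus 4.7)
The plan is to produce the required point $z$ as an appropriately deep interior point of a geodesic $\alpha$ from $1$ to some element $w \in K \cap \bigcap_i H^{g_i}$ of sufficiently large norm. Because that intersection is assumed infinite and balls in the finitely generated group $G$ are finite, such a $w$ of arbitrarily large length exists; I will demand $|w| > 2M + 4\delta$, where $M = \max_i |g_i|$.

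Fix a geodesic $\alpha = [1,w]$ and let $p \in \alpha$ satisfy $d(1,p), d(p,w) > M + 2\delta$ (the midpoint works). The easy observation is that $p$ lies in the $k$-neighborhood of $K$. Indeed, since $1, w \in K$ and $K$ is $\LG$-quasi-convex with constant $k$, a short translation argument ($uK = K$ for $u \in K$, and geodesic words are preserved under left multiplication) shows that \emph{every} geodesic between two points of $K$ stays within $k$ of $K$.

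For each $i$, I set $h_i = g_i w g_i\inv \in H$, which belongs to $H$ because $w \in H^{g_i}$, and observe that $g_i$ and $w g_i = g_i h_i$ both lie in the left coset $g_i H$. I then consider the geodesic quadrilateral on vertices $1, g_i, w g_i, w$: its sides $[1, g_i]$ and $[w, w g_i]$ have length $|g_i| \le M$; its side $\beta = [g_i, w g_i]$ is a geodesic between two points of $g_iH$, and the same translation argument (applied to $g_iH$, using that left translates of $H$ are quasi-convex with the same constant) places $\beta$ in the $k$-neighborhood of $g_iH$; and its fourth side is $\alpha$. Splitting the quadrilateral by a diagonal and applying $\delta$-slimness to each of the two resulting triangles, every point of $\alpha$ lies in the $2\delta$-neighborhood of the union of the other three sides. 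The choice of $p$ rules out $2\delta$-proximity to either short side (which would force $d(1,p) \le M+2\delta$ or $d(p,w) \le M+2\delta$), so $p$ must be within $2\delta$ of $\beta$, and therefore within $k + 2\delta$ of $g_iH$.

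Taking $z = p$ then gives the desired bound $\nu(k) = k + 2\delta$ simultaneously for $K$ and all the cosets $g_iH$. The only mildly delicate point is the translation argument converting $\LG$-quasi-convexity (a statement about paths based at $1$) into an ``internal'' quasi-convexity statement for arbitrary cosets $g_iH$, which I would state carefully as a preliminary observation before invoking it; the rest is a standard thin-quadrilateral computation, entirely in the spirit of the bounded packing arguments of Hruska--Wise.
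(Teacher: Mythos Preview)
Your argument is correct and takes a genuinely different, more elementary route than the paper's. The paper argues via bi-infinite geodesics: it picks an infinite-order element $x \in K \cap \bigcap_i H^{g_i}$, builds bi-infinite geodesic lines asymptotic to $x^{\pm\infty}$ that stay within $k$ of $K$ and (after translation) of each $g_iH$, and then invokes the fact that bi-infinite geodesics with common endpoints at infinity are $2\delta$-Hausdorff close. You replace all of this with a finite computation: a sufficiently long segment $[1,w]$ and a thin-quadrilateral estimate for each $i$. Your route avoids any appeal to the Gromov boundary or to the existence of infinite-order elements in infinite subgroups, needing only that balls in $G$ are finite and that geodesic quadrilaterals are $2\delta$-slim. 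The paper's approach, by contrast, is set up precisely so as to transfer to the relatively hyperbolic setting (which is exactly what the authors do later, replacing the three facts about hyperbolic groups by their analogues due to Osin and Hruska--Wise); your quadrilateral argument would require more care there, since the ``short'' sides $[1,g_i]$ and $[w,wg_i]$ are short in the $A$-metric but not in the relative metric.

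One small slip to fix: with the paper's convention $H^{g} = gHg^{-1}$ (so that $w \in H^{g_i}$ means $g_i^{-1} w g_i \in H$), you should set $h_i = g_i^{-1} w g_i$, not $g_i w g_i^{-1}$; then indeed $g_i h_i = w g_i$. The key claim ``$g_i, wg_i \in g_iH$'' is correct regardless, so this does not affect the argument.
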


\preuve
We rely on a few well-known facts on hyperbolic groups (see \textit{e.g.} \cite[Thm 2.28, Prop. 4.2]{2002:KapovichBenakli}: an infinite subgroup of $G$ contains an element of infinite order; if $x$ has infinite order, then the limit $c$ of the geodesic segments $[x^{-m};x^m]$ ($m\ge 1$) is a bi-infinite geodesic line with two endpoints in $\partial G$, namely, $x^{-\infty} = \lim_m x^{-m}$ and $x^\infty = \lim_m x^m$; if $c$ and $c'$ are bi-infinite geodesic lines with the same endpoints, then every vertex of $c$ is within distance $2\delta$ of a vertex of $c'$.

Let $H,K$ be quasi-convex subgroups, with constant of quasi-convexity $k$, and let $g_1,\ldots, g_n\in G$ be such that $K$ and the $g_iH$ are pairwise distinct and $K \cap \bigcap_iH^{g_i}$ is infinite. Let $x$ be an element of infinite order of $K \cap \bigcap_iH^{g_i}$ and let $c$ be the bi-infinite geodesic described above. Since $K$ is quasi-convex with constant $k$, every vertex of $c$ is within distance at most $k$ from $K$.

Let $1 \le i \le n$. Consider the element $g_i\inv xg_i \in H$, also of infinite order: there exists a bi-infinite geodesic line $c_i$ with endpoints $\lim g_i\inv x^{-m}g_i$ and $\lim g_i\inv x^mg_i$, which remains within distance at most $k$ from $H$. Now $\lim g_i\inv x^mg_i = g_i\inv \lim x^mg_i$, and each $x^mg_i$ is within distance $|g_i|$ from $x^m$. So $\lim x^mg_i = x^\infty$ and the endpoints of $c_i$ are $g_i\inv x^{-\infty}$ and $g_i\inv x^\infty$.

Acting on the left by $g_i$ yields a geodesic line $g_i c_i$, with endpoints $x^{-\infty}$ and $x^\infty$, within distance at most $k$ from $g_iH$. As noted above, since  the lines $c$ and $g_i c_i$ ($1\le i \le n$) have the same endpoints, they remain within distance at most $2\delta$ from one another.

Let $z$ be a point on $c$. Then the ball of center $z$ and radius $k$ meets $K$, and the ball of center $z$ and radius $k+2\delta$ meets every coset $g_i H$. This concludes the proof.
\eop

In view of Proposition~\ref{prop: good properties}, this yields the following corollary.

\begin{cor}\label{cor: good properties}
Let $G$ be a hyperbolic group. There are partial algorithms which solve the conjugacy containment, the conjugacy, the intersection of conjugates, the height and the almost malnormality problems for quasi-convex subgroups.

These algorithms are uniform in the finite presentation of $G$.
\end{cor}

\preuve
Propositions~\ref{prop: hyperbolic BP_nu} and~\ref{prop: good properties} prove the existence of the announced partial algorithms. The uniformity statement follows from the fact that there is a partial algorithm which, given a finite group presentation, halts exactly if the group is hyperbolic, and outputs an automatic structure for that group, see \cite{2001:EpsteinHolt}.
\eop

\begin{remark}
Note that it is undecidable whether a tuple of elements generates a malnormal subgroup of a hyperbolic group \cite{2001:BridsonWise}. Here of course, the input subgroup is required to be quasi-convex.
\end{remark}

\subsection{Deciding the finite index property}\label{sec: finite index}

In order to get a decidability statement for the finite index property, we make an additional assumption on the rational structure $(G,L)$ -- strongly inspired from Silva, Soler-Escriva and Ventura \cite{2016:SilvaSoler-EscrivaVentura}. We say that $L$ has the \emph{extendability property} if the following holds: for every $u\in L$, there exists a sequence $(v_n)_n$ such that, for each $n$, $uv_n\in L$ and for almost all $m$, $u$ is a prefix of some $L$-representative of $\mu(uv_nv_m\inv u\inv)$.

\begin{theorem}\label{thm: characterize finite index}
Let $(G,L)$ be a rational structure for a group $G$, let $H \le G$ be an $L$-quasi-convex subgroup and let $(\Gamma,1)$ be a reduced rooted graph which is Stallings-like for $H$ with respect to $L$.
\begin{itemize}
\item[(1)] If every word of $L$ labels a path starting at the base vertex in $\Gamma$, then $H$ has finite index, at most equal to the number of vertices of $\Gamma$.

\item[(2)] If every word of $L$ labels a path starting at the base vertex in the Stallings graph $\Gamma_L(H)$, then $H$ has finite index, equal to the number of vertices of $\Gamma_L(H)$.

\item[(3)] Suppose now that $L$ has the extendability property. If $H$ has finite index, then every word of $L$ labels a path starting at the base vertex in $\Gamma$.
\end{itemize}
\end{theorem}

\preuve
Observe that if $u$ and $v$ label paths in $\Gamma$, from the base vertex to the same vertex $p$, then the word $uv\inv$ labels a loop at the base vertex, and hence $\mu(uv\inv) \in H$. In particular, $H\mu(u) = H\mu(v)$. Thus there is a well-defined partial map, from the set of vertices of $\Gamma$ onto the set of cosets $H\mu(u)$ such that there is an $u$-labeled path starting at the base vertex of $\Gamma$.

If every word of $L$ labels a path starting at the base vertex in $\Gamma$, this implies the existence of an onto partial map from the set of vertices of $\Gamma$ to the set of all the $H$-cosets, that is, $H$ has finite index, at most equal to the number of vertices of $\Gamma$. This establishes (1).

In the particular case where $(\Gamma,1) = (\Gamma_L(H),H)$, this shows that the index of $H$ is less than or equal to the number of vertices of $\Gamma_L(H)$. But that graph is contained in $\Schreier(G,H)$, which has as many vertices as there are cosets of $H$. Statement (2) follows. 

Now, assume that $L$ has the extendability property, and that $H$ has finite index in $G$. Let $u\in L$. By the extendability property of $L$, there exists a sequence of words $v_n$ such that, for each $n$, $uv_n \in L$ and for almost all $m$, $u$ is a prefix of some $L$-representative of $\mu(uv_nv_m\inv u\inv)$. Since $H$ has finite index, the sequence $H\mu(uv_n)$ takes only finitely many values. In particular, there exists $n$ such that for infinitely many values of $m$, we have $H\mu(uv_n) = H\mu(uv_m)$. In particular, for some $n,m$, $u$ is a prefix of an $L$-representative $w$ of $\mu(uv_nv_m\inv u\inv)$, with $\mu(uv_nv_m\inv u\inv) \in H$. Since $w$ labels a loop at the base vertex in $\Gamma$, it follows that $u$ labels a path starting at the base vertex.
\eop

It is an elementary property of rational languages that one can decide whether a rational language is contained in another. Therefore Theorem~\ref{thm: characterize finite index} yields the following corollary. The \emph{finite index problem} in $G$ computes the index of the subgroup of $G$ generated by a given tuple of reduced words.

\begin{cor}
Let $(G,L)$ be a rational structure for a group $G$, satisfying the extendability property, and for which $L$-representatives are computable and Stallings-like graphs are computable for $L$-quasi-convex subgroups. Then there is a partial algorithm which solves the finite index problem for $L$-quasi-convex subgroups of $G$.
%
%
\end{cor}


\begin{example}\label{ex: index of subgroups of modular group}
We use, again, the example of the modular group $G = \langle a,b \mid a^2, b^3\rangle$, see Examples~\ref{ex: Stallings graph in PSL} and~\ref{ex: computing intersection}. Recall that $\LG$ is the set of reduced words over alphabet $\{a,b\}$ without consecutive occurrences of $a$, $a\inv$, $b$ or $b\inv$.

We first show that $\LG$ satisfies the extendability property: if $u$ ends with an $a$ or $a\inv$, say $u = xa$, we let $v_n = b\inv(ab)^n$. Then each $uv_n \in \LG$ and, if $m > n$, we have
\begin{align*}
uv_nv_m\inv u\inv &=_G xab\inv(ab)^n(b\inv a\inv)^mba\inv x \\
&=_G xab^{-2}a\inv(b\inv a\inv)^{m-n-1}ba\inv x \\
&=_G xaba\inv(b\inv a\inv)^{m-n-1}ba\inv x \in\LG.
\end{align*}
If instead $u$ ends with $b$ or $b\inv$, we let $v_n = ab\inv(ab)^n$.

We can therefore use Theorem~\ref{thm: characterize finite index} to assert that the subgroup $H$ in Examples~\ref{ex: Stallings graph in PSL} and~\ref{ex: computing intersection} has index 6, and that the subgroup $K$ in Example~\ref{ex: computing intersection} has infinite index. In fact, the cosets $Ka(b\inv a)^n$ are pairwise distinct.
\end{example}

\section{Relatively quasi-convex subgroups in relatively hyperbolic groups}\label{sec: relatively hyperbolic}

In this section we prove various algorithmic results on relatively quasi-convex subgroups of certain relatively hyperbolic groups, which generalize the results from the previous section on quasi-convex subgroups of hyperbolic groups. 

\subsection{Preliminaries on relatively hyperbolic groups}

There are several definitions of relative hyperbolicity, introduced by Gromov \cite{1987:Gromov}, Farb \cite{1998:Farb}, Bowditch \cite{2012:Bowditch}, Dru\c tu and Sapir \cite{2005:DrutuSapir}, Osin \cite{2006:Osin}, which turn out to be equivalent (see Bumagin \cite{2005:Bumagin}, Dahmani \cite{2003:Dahmani}, Hruska \cite[Theorem 5.1]{2010:Hruska}).

We will use the following definition: let $G$ be a group generated by a finite set $A$, and let $\calP$ be a \emph{peripheral structure}, that is, a finite collection $\calP$ of finitely generated subgroups in $G$, called the \emph{peripheral} subgroups. Let $B = \bigcup _{P\in \calP} (P \setminus\{1\})$ and let $\Cayley(G,A\cup B)$ the \emph{relative Cayley graph} of $G$. That is: the non trivial elements of the subgroups $P_i$ are considered as alphabet letters as well as the set of generators $A$.

If $\calP = \{P_1, \ldots, P_\ell\}$, let $\hat F=F(A)\ast P_1\ast\ldots\ast P_\ell$. A word on alphabet $\tilde A \cup B$ is \emph{$\hat F$-reduced} if it is reduced in the sense of free products, that is, if it is of the form $x_0p_1x_1\cdots p_mx_m$ where the $x_j$ are reduced words in $F(A)$, the $p_j$ are elements of $B$, and $p_j$ and $p_{j+1}$ do not sit in the same $P_i$ if $x_j = 1$. An $\hat F$-reduced word is a \emph{relative (quasi-) geodesic} if it is (quasi-) geodesic in the Cayley graph $\Cayley(G,A\cup B)$.

We say that the group $G$ is \emph{weakly relatively hyperbolic, relative to the peripheral structure $\calP$}, if $\Cayley(G,A\cup B)$ is a hyperbolic metric space \cite[Def. 2.1]{2009:Martinez-Pedroza}.

If $p$ is a path in $\Cayley(G,A\cup B)$ and $P\in \calP$, we say that a subpath $p'$ is a \emph{$P$-syllable} if all the edges in $p'$ are labeled by an element of $P$. A \emph{$P$-component} of $p$ is a maximal $P$-syllable. We say that two vertices $x,y\in G$ are \emph{$P$-connected} if $y \in xP$, and that two $P$-syllables are $P$-connected if they contain $P$-connected vertices.

The \emph{bounded coset penetration property} (BCP) is defined as follows.

\begin{definition} \cite[Definition 6.5]{2006:Osin}\label{defn: bcp}
The pair $(G,\calP)$ has the BCP property, if for any $\lambda \geq 1$,
there exists a constant $a = a(\lambda)$ such that the following conditions hold. Let $p, q$ be $(\lambda, 0)$-quasi-geodesics without backtracking in $\Cayley(G, A\cup B)$ (that is: for each $x\in G$ and $P\in \calP$, all the vertices of $p$ (resp. $q$) in $xP$ sit in the same $P$-component) with the same initial vertex and whose terminal vertices $p_+$ and $q_+$ satisfy $d_{A}(p_+,q_+) \leq 1$. Then for each $P\in \calP$:
 
1) if $p$ has a $P$-component $s$, from vertex $s_-$ to vertex $s_+$, and if $d_A(s_-,s_+) \geq a$, then $q$ has a $P$-component which is $P$-connected to $s$;
 
2) if $p$ and $q$ both have $P$-components, from vertices $s_-$ and $t_-$ to vertices  $s_+$ and $t_+$ respectively, then $d_A(s_-,t_-) \leq a$ and $d_A(s_+,t_+) \leq a$.
\end{definition} 

Then $G$ is \emph{relatively hyperbolic, relative to the peripheral structure $\calP$} if $G$ is weakly relatively hyperbolic relative to $\calP$ and the pair $(G, \calP)$ has the BCP property. For short, we say that $(G,\calP)$ is a relatively hyperbolic group.

\subsection{Relatively quasi-convex subgroups}\label{sec: slender case}

Let $G$ be an $A$-generated group equipped with a peripheral structure $\calP$. A subset $S \subseteq G$ is \emph{relatively  quasi-convex} if there exists a constant $k$ such that, if an $\hat F$-reduced word labels a relative geodesic path $\pi$ in $\Cayley(G,A\cup B)$ between two elements of $S$, then $\pi$ lies in the $k$-neighborhood of $S$ in $\Cayley (G,A)$: that is, for any vertex $x\in \pi$, there exists a vertex $y\in S$ such that $\dist_A(x,y)\leq k$ --- where $\dist_A(x,y)$ denotes the distance between $x$ and $y$ in $\Cayley(G,A)$, namely $\dist_A(x,y) = |x\inv y|_A$.

We first show a general result on relatively quasi-convex subgroups satisfying a certain finite generation property. We say that a subgroup $H \le G$ is \emph{peripherally finitely generated} if $P \cap H^g$ is finitely generated for every peripheral group $P \in \calP$ and every $g\in G$.

\begin{prop}\label{prop: slender case}
Let $G$ be a weakly relatively hyperbolic group relative to a peripheral structure $\calP$ and let $H$ be a relatively quasi-convex subgroup of $G$ which is peripherally finitely generated. Then there exists a finite fragment $\Gamma(H,\calP)$ of the Schreier graph $\Schreier(G,H)$ relative to $A$, with the following property: every element of $H$ admits a representative which labels a loop at vertex $H$ in $\Gamma(H,\calP)$. 

In particular $H$ is finitely generated.
\end{prop}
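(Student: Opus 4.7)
The plan is to construct $\Gamma(H,\calP)$ as a finite enlargement of the $A$-metric ball $N_k$ of radius $k$ around the base vertex $H$ in $\Schreier(G,H)$, where $k$ is a constant of relative quasi-convexity for $H$. Since $A$ is finite, $N_k$ is finite. The graph $\Gamma(H,\calP)$ will consist of $N_k$ together with finitely many $A$-paths of $\Schreier(G,H)$ that witness, for each vertex $v\in N_k$ and each peripheral $P\in\calP$, both the $P$-transitions between vertices of $N_k$ and the relevant $P$-stabilizers, enough to reconstruct an $A$-representative of any element of $H$.

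More precisely, for $v = Hg\in N_k$ and $P\in\calP$, the stabilizer of $v$ under the right $P$-action on cosets is $P\cap g\inv Hg$, which is finitely generated by the peripheral finite-generation hypothesis applied to the conjugate $g\inv Hg$. Fix a finite generating set $S_{v,P}$. For every $v' \in N_k$ lying in the orbit $HgP$, choose one shortcut element $p_{v,v',P}\in P$ with $Hg\,p_{v,v',P} = v'$. Write each generator of $S_{v,P}$ and each $p_{v,v',P}$ as a word in $\tilde A^*$, giving a finite $A$-path in $\Schreier(G,H)$. Let $\Gamma(H,\calP)$ be the subgraph of $\Schreier(G,H)$ whose vertex set is $N_k$ together with the interior vertices of all these finitely many $A$-paths, with every $A$-edge of $\Schreier(G,H)$ between these vertices; this is finite by construction.

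To prove the covering property, given $h\in H$, take a relative geodesic $\hat F$-reduced representative $w = x_0 p_1 x_1 \cdots p_m x_m$ of $h$, and let $1 = u_0, u_1,\ldots, u_{2m+1} = h$ be the successive syllable endpoints in $G$. Relative quasi-convexity ensures that every vertex of the corresponding path in $\Cayley(G,A\cup B)$, including the interior vertices of each $x_j$-syllable read as an $A$-path in $\Cayley(G,A)$, lies within $A$-distance $k$ of the subset $H$. Projecting to $\Schreier(G,H)$, every such vertex lies in $N_k\subseteq\Gamma(H,\calP)$, so each $x_j$ already labels a valid $A$-path in $\Gamma(H,\calP)$. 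For each peripheral syllable $p_j\in P_j$ joining $v = Hu_{2j-1}$ to $v' = Hu_{2j}$ (both in $N_k$), the element $p_j\,p_{v,v',P_j}\inv$ fixes $v$ and thus lies in the stabilizer $P_j\cap u_{2j-1}\inv H u_{2j-1}$, so it is expressible as a word over $S_{v,P_j}$. Concatenating the $A$-words chosen for this stabilizer expression and for $p_{v,v',P_j}$ yields an $A$-representative of $p_j$ labeling a path from $v$ to $v'$ inside $\Gamma(H,\calP)$. Substituting these expressions for each $p_j$ in $w$ produces an $\tilde A^*$-word $w'$ with $\overline{w'} = h$ and labeling a loop at $H$ in $\Gamma(H,\calP)$.

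Finite generation of $H$ then follows: the loops at the base vertex of any finite connected graph generate a finitely generated subgroup of $F(A)$, its image under $\mu$ contains $H$ by the covering property just proved, and it is contained in $H$ because every $A$-loop at the vertex $H$ in $\Schreier(G,H)$ is labeled by a word representing an element of $H$. The main bookkeeping hurdle is to keep track of exactly which conjugate $g\inv H g$ of $H$ meets $P$ at each vertex $v = Hg\in N_k$, and to verify the factorisation (peripheral element) $=$ (stabilizer element)$\cdot$(shortcut); this split is the single place where peripheral finite generation is actually used.
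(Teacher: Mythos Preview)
Your proof is correct and follows essentially the same approach as the paper: build $\Gamma(H,\calP)$ from $\Schreier_k(G,H)$ by adjoining, for each vertex $Hg$ and each $P\in\calP$, finitely many $A$-loops generating the stabilizer $P\cap H^g$ together with one $A$-path witnessing each $P$-transition to another vertex of $\Schreier_k(G,H)$, then rewrite a relative geodesic for $h\in H$ by replacing every peripheral syllable with (stabilizer word)$\cdot$(shortcut). Your account is slightly more explicit than the paper's in noting that the interior vertices of the $x_j$-syllables also lie in $N_k$ (which is needed for the $x_j$ to label paths in $\Schreier_k(G,H)$) and in spelling out why finite generation follows from the finiteness of $\Gamma(H,\calP)$.
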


\begin{remark}\label{rk: slender}
Proposition~\ref{prop: slender case} extends several known results. It applies in particular if $H$ is \emph{strongly relatively quasi-convex}, that is, if $P \cap H^g = 1$ for every $P \in \calP$ and $g\in G$: in that case, Osin proved that $H$ is finitely generated and hyperbolic \cite[Thms 4.13 and 4.16]{2006:Osin}. It also applies if the peripheral structure $\calP$ consists of so-called \emph{slender} groups, that is, groups in which every subgroup is finitely generated (\textit{e.g.} finitely generated abelian groups): Hruska showed that this condition on $\calP$ is equivalent to the fact that all relatively quasi-convex subgroups are finitely generated \cite[Cor. 9.2]{2010:Hruska}.
\end{remark}

We will use the following elementary fact.

\begin{fact}\label{fact: single coset}
Let $H, P$ be subgroups of a group $G$. For all $g, g'\in G$, the set of elements $p\in P$ such that $Hgp = Hg'$, namely the set $P \cap g\inv Hg'$, consists of a single coset of $P\cap H^g$ in $P$.
\end{fact}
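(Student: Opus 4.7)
The plan is to unpack the two descriptions of the set and show directly that any two of its elements differ by an element of $P \cap H^g$.

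First I would verify the stated equality. For $p\in P$, the condition $Hgp = Hg'$ is equivalent to $gp \in Hg'$, which says $p = g\inv h g'$ for some $h\in H$, i.e.\ $p \in g\inv H g'$. So indeed $\{p\in P \mid Hgp = Hg'\} = P \cap g\inv H g'$.

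Next, I would observe that if this set is empty, there is nothing to prove, so assume it contains some $p_0$, with $gp_0 = h_0 g'$ for some $h_0\in H$. For an arbitrary $p$ in the set, write $gp = h g'$ with $h\in H$. Then
\[ g p p_0\inv g\inv = h g' (h_0 g')\inv \cdot (g p_0 g\inv) \cdot (g p_0 g\inv)\inv \]
is best computed by the simpler direct substitution $g p p_0\inv g\inv = h g' (g')\inv h_0\inv = h h_0\inv \in H$, so $p p_0\inv \in g\inv H g = H^g$. Since $p, p_0\in P$ as well, $p p_0\inv \in P \cap H^g$, giving $p \in (P\cap H^g)\, p_0$. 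Conversely, if $q = q' p_0$ with $q'\in P\cap H^g$, then $q\in P$ and $g q = (g q' g\inv)(g p_0) = (g q' g\inv) h_0 g' \in H g'$ because $g q' g\inv \in H$ by definition of $H^g$; hence $q\in P \cap g\inv H g'$. This shows $P \cap g\inv H g' = (P\cap H^g)\, p_0$, a single right coset.

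There is no real obstacle here; the only thing worth being careful about is the convention $H^g = g\inv H g$ (as used earlier in the paper) so that the computation of $pp_0\inv$ lands in $H^g$ on the correct side, and correspondingly that the coset of $P\cap H^g$ in $P$ containing $p_0$ is a \emph{right} coset.
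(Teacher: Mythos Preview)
Your argument is correct. The paper records this as an ``elementary fact'' and gives no proof, so there is nothing to compare against; your direct verification that $P \cap g\inv H g' = (P\cap H^g)\,p_0$ whenever the set is nonempty is exactly what is required, and your reading of the convention $H^g = g\inv H g$ matches the paper's usage. (The displayed expression preceding your ``simpler direct substitution'' is correct but vacuous, since the two factors $(gp_0g\inv)(gp_0g\inv)\inv$ cancel; you could simply delete it.)
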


\proofof{Proposition~\ref{prop: slender case}}
Let $k$ be a quasi-convexity constant for $H$. Let $Hg$, $Hg'$ be cosets of $H$ and let $P \in \calP$. If $P \cap g\inv Hg' \ne \emptyset$, we fix a word $z_P(Hg,Hg') \in F(A)$ such that $\bar z_P(Hg,Hg') \in P \cap g\inv Hg'$. If $P \cap H^g \ne 1$, we also fix a finite set of words $Y_{P,Hg}$ in $F(A)$, whose images in $G$ generate $P \cap H^g$: such a set exists since $H$ is peripherally finitely generated.

Then, by Fact~\ref{fact: single coset}, every element $p \in P$ such that $Hgp = Hg'$ (that is: every element of $P \cap g\inv Hg'$) is the image in $G$ of a word of the form $yz_P(Hg,Hg')$ for some $y \in (Y_{P,Hg}\cup Y_{P,Hg}\inv)^*$.

We now describe the construction of $\Gamma(H,\calP)$, starting from $\Schreier_k(G,H)$, which is the subgraph of $\Schreier(G,H)$ consisting of the vertices $Hg$ such that $|g|_A \le k$ (see Section~\ref{sec: quasi-convex subgroups}). For each vertex $Hg$ of $\Schreier_k(G,H)$ and for each $P \in \calP$, we add to $\Schreier_k(G,H)$:
\begin{itemize}
\item the loops in $\Schreier(G,H)$ at $Hg$ labeled by a word in $Y_{P,Hg}$ (if $P\cap H^g \ne 1$);
\item and for every vertex $Hg'$ of $\Schreier_k(G,H)$ such that $P \cap g\inv Hg' \ne \emptyset$, the path in $\Schreier(G,H)$ labeled by $z_P(Hg,Hg')$, from $Hg$ to $Hg'$.
\end{itemize}
By construction, the resulting reduced graph $\Gamma(H,\calP)$, rooted at $H$, is a finite subgraph of $\Schreier(G,H)$ containing $\Schreier_k(G,H)$. As a result, the image in $G$ of the label of a loop at $H$ in $\Gamma(H,\calP)$, is an element of $H$.

Let now $g\in H$ and let $w = x_0p_1x_1p_2 \cdots p_mx_m$ be a $\hat F$-reduced word (with each $x_i \in F(A)$ and each $p_i$ in some peripheral subgroup), which labels a relative geodesic path in $\Cayley(G,A\cup B)$, from 1 to $g$. For each $j$, let $g_j = \mu(x_0p_1\cdots x_{j-1}p_j)$ ($g_0 = 1$). Since $H$ is relatively quasi-convex with constant $k$, every $g_j$ and $g_j\bar x_j$ is within $A$-distance at most $k$ from a vertex of $H$, and in particular, the cosets $Hg_j$ and $Hg_j\bar x_j$ are vertices of $\Schreier_k(G,H)$.

By construction of $\Gamma(H,\calP)$, the word $x_j$ labels a path in $\Schreier_k(G,H)$, and hence also in $\Gamma(H,\calP)$, from $Hg_j$ to $Hg_j\bar x_j$. By our earlier observation, if $p_j \in P_j$ (with $P_j\in \calP)$, then $p_j \in P_j \cap \bar x_{j-1}\inv g_{j-1}\inv H g_j$ so there exists a word $y_j \in (Y_{P_j,Hg_{j-1}\bar x_{j-1}}\cup Y_{P_j,Hg_{j-1}\bar x_{j-1}}\inv)^*$ such that $p_j = \bar y_j\bar z_{P_j}(Hg_{j-1}\bar x_{j-1},Hg_j)$. By definition of $\Gamma(H,\calP)$, $y_jz_{P_j}(Hg_{j-1}\bar x_{j-1},Hg_j)$ labels a path in that graph from $Hg_{j-1}\bar x_{j-1}$ to $Hg_j$.

Thus the word
$$\tilde w = x_0y_1z_{P_1}(Hg_0\bar x_0,Hg_1)x_1 \cdots y_mz_{P_m}(Hg_{m-1}\bar x_{m-1},Hg_m)x_m,$$
over alphabet $A$, labels a loop at $H$ in $\Gamma(H,\calP)$ and it has the same image in $G$ as $w$: that is, it is a representative of $\bar w$.
\eopo

\subsection{Stallings graphs for relatively quasi-convex subgroups}

We now use a variant of the construction in Proposition~\ref{prop: slender case} to discuss certain relatively quasi-convex subgroups of relatively hyperbolic groups, relative to a peripheral structure $\calP$ consisting of geodesically bi-automatic groups. We introduce an additional hypothesis on the subgroup $H$: we say that it has \emph{peripherally finite index} if, for each peripheral subgroup $P$ and each $g\in G$, $P \cap H^g$ is either finite or a finite index subgroup of $P$ \footnote{These subgroups are called \emph{fully quasi-convex} by Dahmani \cite{2003:Dahmani-2} and Mart\'\i nez-Pedroza \cite{2009:Martinez-Pedroza}.}. Note that strongly relatively quasi-convex subgroups (see Remark~\ref{rk: slender}) have peripherally finite index.

\begin{theorem}\label{thm: peripherally finite index}
Let $(G,\calP)$ be an $A$-generated relatively hyperbolic group, such that $\calP$ consists of geodesically bi-automatic groups. Then one can compute a finite set $X$ containing $A$, a geodesically bi-automatic structure for $G$ over alphabet $X$ and a constant $\alpha > 0$ such that, if $L$ is the set of representatives of the elements of $G$ given by the bi-automatic structure, the following properties hold:
\begin{itemize}
\item every $L$-quasi-convex subgroup is relatively quasi-convex;

\item every relatively quasi-convex subgroup with peripherally finite index, is $L$-quasi-convex.
\end{itemize}
\end{theorem}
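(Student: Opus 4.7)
The strategy is to enlarge the alphabet $A$ with bi-automatic generating sets for each peripheral group, and then assemble a combined bi-automatic structure on $G$ whose representatives are relative quasi-geodesics whose peripheral syllables are the normal forms coming from the peripheral groups. For each $P_i\in\calP$, fix a symmetric generating set $A_i$ supporting the given geodesic bi-automatic language $L_i$ of $P_i$, and let $X=A\cup A_1\cup\cdots\cup A_\ell$. I would take $L$ to consist of the words of the form $u_0v_1u_1\cdots v_mu_m$ in $\tilde X^*$, where each $u_j$ is a word on $\tilde A$, each non-empty $v_j$ belongs to some $L_{i_j}$ (with consecutive peripheral syllables based at different peripheral subgroups), and the associated relative word $u_0\bar v_1u_1\cdots\bar v_mu_m$ labels a relative quasi-geodesic without backtracking in $\Cayley(G,A\cup B)$, with each $A$-syllable $u_j$ selected as a shortlex-least double coset representative for the pair of peripheral subgroups adjacent to it. The rationality of $L$ and the existence of an associated bi-automatic structure on $(G,X)$ follow the same pattern as the classical construction for hyperbolic groups, adapted using the BCP property and the geodesic bi-automatic multipliers of the $P_i$; the construction is effective once a hyperbolicity constant of $\Cayley(G,A\cup B)$ and BCP constants are known.

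Once this structure is in place, let $H$ be a relatively quasi-convex subgroup with peripherally finite index, and let $w=u_0v_1u_1\cdots v_mu_m$ be an $L$-representative of some $h\in H$. Its path $\pi$ in $\Cayley(G,X)$ projects to a $(\lambda,c)$-quasi-geodesic without backtracking between two elements of $H$ in $\Cayley(G,A\cup B)$, for uniform constants $\lambda,c$. By Hruska's characterization of relative quasi-convexity via relative quasi-geodesics \cite{2010:Hruska}, there is a constant $k'$ depending only on $H$, $\lambda$ and $c$ such that every vertex along this projection lies within $A$-distance at most $k'$ of $H$. In particular, writing $g_j$ for the element reached by $\pi$ just before reading the syllable $v_j$ and $g_{j+1}=g_j\bar v_j$ just after it, both $g_j$ and $g_{j+1}$ are within $A$-distance $k'$ of $H$.

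It remains to bound the $X$-distance from $H$ of the intermediate vertices $g_j s$ for prefixes $s$ of $v_j$. By Fact~\ref{fact: single coset}, $\bar v_j$ belongs to a single coset of the subgroup $Q_j=P_{i_j}\cap H^{g_j}$ inside $P_{i_j}$. The peripherally finite index hypothesis gives that $Q_j$ has finite index in $P_{i_j}$, and the finiteness of $H$-conjugacy classes of infinite peripheral intersections \cite{2009:Martinez-Pedroza,2010:Hruska}, combined with the $A$-boundedness of the $g_j$ modulo $H$ established above, yields a uniform bound $D$ on the $A_{i_j}$-diameter of a transversal for $Q_j$ in $P_{i_j}$, independent of $j$ and of $w$. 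Hence for every prefix $s$ of $v_j$ there exists $t\in Q_j$ with $|s^{-1}t|_{A_{i_j}}\le D$, and $g_jt\in Hg_j$ lies within $A$-distance $k'$ of $H$; therefore $g_js$ is within $X$-distance $D+k'$ of $H$. This shows $H$ is $L$-quasi-convex with constant $D+k'$.

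The main obstacle is the first step: verifying that the language $L$ is rational and that the associated multiplier languages over alphabet $X$ are likewise rational, which requires combining the bi-automatic multipliers of each $P_i$ with the hyperbolic skeleton of $\Cayley(G,A\cup B)$ in a synchronization argument governed by the BCP property. The rest of the argument is standard fellow-traveller and coset-representative bookkeeping, together with the finiteness statement for peripheral intersections up to $H$-conjugacy.
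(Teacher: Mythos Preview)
Your overall strategy matches the paper's: build a bi-automatic structure on an enlarged alphabet whose representatives have peripheral syllables in the given peripheral languages, then bound the distance from $H$ of the vertices along an $L$-representative by splitting into the break-point vertices (handled by relative quasi-convexity plus a BCP-type estimate) and the intra-peripheral vertices (handled by the peripherally finite index hypothesis). Two points deserve comment.

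First, the construction of $L$. The paper does not build the bi-automatic structure by hand; it invokes the Antol\'{\i}n--Ciobanu theorem (their Proposition~\ref{prop: Antolin-Ciobanu}), which produces a computable finite extension $X \supseteq A$ and shows that $\LG(X,(L_P)_{P\in\calP})$ --- the $X$-geodesics whose $\calP$-factorization has peripheral syllables in the $L_P$ --- is a geodesically bi-automatic structure, and moreover that the associated relative word $\hat w$ is a $(\lambda,\epsilon)$-quasi-geodesic for computable $\lambda,\epsilon$. Your description of $L$ (relative quasi-geodesics without backtracking, with shortlex double-coset $A$-syllables) is not obviously rational, and your own assessment that verifying rationality and the multiplier automata is ``the main obstacle'' is accurate; you should simply cite Antol\'{\i}n--Ciobanu here rather than re-derive it.

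Second, and more substantively, there is a gap in your treatment of the peripheral syllables. You write ``the peripherally finite index hypothesis gives that $Q_j$ has finite index in $P_{i_j}$'', but the hypothesis only says that $Q_j = P_{i_j}\cap H^{g_j}$ is \emph{either finite or} of finite index. When $Q_j$ is finite and $P_{i_j}$ is infinite, a transversal for $Q_j$ in $P_{i_j}$ is infinite and your bound $D$ does not exist, so your argument for the intermediate vertices $g_j s$ collapses in this case. The paper handles this separately: by Fact~\ref{fact: single coset}, $\bar v_j$ lies in the coset $Q_j\,\bar v_j = P_{i_j}\cap g_j^{-1}Hg_{j+1}$, which is a finite set of cardinality $|Q_j|$; since $Hg_j$ and $Hg_{j+1}$ range over the finitely many cosets at $A$-distance at most $\rho$ from $H$, there are only finitely many possible values of $\bar v_j$, hence only finitely many $L_{P_{i_j}}$-representatives $v_j$, hence a uniform bound on $|v_j|$. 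This gives the required bound on the intermediate vertices in the finite case. You need to add this case distinction.
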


We make use of several fundamental ingredients, the first of which is a set of results due to Antol\'{\i}n and Ciobanu \cite{2016:AntolinCiobanu} (more specifically Corollary 1.9, Lemma 5.3 and Theorem 7.6 in \cite{2016:AntolinCiobanu}), which states essentially that we can enlarge the alphabet $A$ in such a way that $G$ admits a geodesically bi-automatic structure extending given bi-automatic geodesic structures of the peripheral subgroups. If $w$ is a reduced word in $F(A)$, the \emph{$\calP$-factorization} of $w$ \cite[Construction 4.1]{2016:AntolinCiobanu} is the uniquely determined factorization $w = x_0p_1x_1\cdots p_mx_m$, where each $x_i \in F(A \setminus B)$, each $p_i$ is a non-empty word in $F(A \cap P)$ for some $P \in \calP$ and, if $x_i = 1$ and $a$ is the first letter of $p_{i+1}$, then the letters of $p_ia$ do not all sit in the same $P$ ($P\in\calP$). In that case, we denote by $\hat w$ the ($\hat F$-reduced) word $\hat w = x_0 \bar p_1x_1\cdots \bar p_mx_m \in \hat F$. In addition, suppose that for each $P\in \calP$, $L_P$ is a language of reduced words in $F(A \cap P)$: we denote by $\Rel(A,(L_P)_{P\in\calP})$ the set of reduced words $w\in F(A)$ whose $\calP$-factorization $w = x_0p_1x_1\cdots p_mx_m$ is such that, if $p_i\in F(A\cap P)$ ($P\in\calP$), then $p_i\in L_P$. We also denote by $\LG(A,(L_P)_{P\in\calP})$ the set of $A$-geodesics in $\Rel(A,(L_P)_{P\in\calP})$.

\begin{prop}\label{prop: Antolin-Ciobanu}
Let $(G,\calP)$ be a relatively hyperbolic $A$-generated group and let $B = \bigcup_{P\in \calP}P \setminus\{1\}$. One can compute $\lambda\ge 1$, $\epsilon \ge 0$, a finite set $A' \subseteq B$ and an explicit correspondance $\Psi$ with the following properties:
\begin{itemize}
\item if $X$ is a finite set such that $A \cup A' \subseteq X \subset A\cup B$ and if, for each $P\in \calP$, we are given $L_P$ is the set of representatives of the elements of $P$ in a geodesically bi-automatic structure for $(P,X\cap P)$, then the correspondence $\Psi$ produces a geodesically bi-automatic structure for $(G,X)$, where the language of representatives of the elements of $G$ is $\LG(X,(L_P)_{P\in\calP})$;

\item If $w\in F(X)$ labels a geodesic from 1 to $\bar w$ in $\Cayley(G,X)$ and if $\bar w \in P$ for some peripheral subgroup $P\in \calP$, then $w \in F(X \cap P)$. In general, $\hat w$ is a relative $(\lambda,\epsilon)$-quasi-geodesic from 1 to $\bar w$ in $\Cayley(G,A\cup B)$.
\end{itemize}
\end{prop}

\begin{remark}
The computability statement for the bi-automatic structures that determine the rational structure $(G,\LG(X,(L_P)_{P\in\calP})$ in Proposition~\ref{prop: Antolin-Ciobanu}, is established at the end of \cite[Section~1]{2016:AntolinCiobanu}, and relies in part on results of Dahmani \cite{2009:Dahmani}.
\end{remark}

%
%
%

We also record a result of Osin which refines the BCP property \cite[Thm 3.23]{2006:Osin}, see also \cite[Thm 2.8]{2016:AntolinCiobanu}.

\begin{prop}\label{prop: improved BCP} 
Let $(G,\calP)$ be a relatively hyperbolic $A$-generated group, $\lambda \ge 1$ and $\epsilon > 0$. One can compute a constant $a(\lambda,\epsilon) > 0$ such that, if $p$ and $q$ are two relative $(\lambda,\epsilon)$-quasi-geodesics with the same endpoints and which do not backtrack, then every vertex of $p$ is at distance at most $a(\lambda,\epsilon)$ from some vertex of $q$ in $\Cayley(G,A)$.
\end{prop}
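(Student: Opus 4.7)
The plan is to combine the Antol\'in--Ciobanu construction (Proposition~\ref{prop: Antolin-Ciobanu}) with the refined bounded coset penetration property (Proposition~\ref{prop: improved BCP}), and then analyse the intermediate vertices of peripheral syllables using the peripherally finite index hypothesis.

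First, apply Proposition~\ref{prop: Antolin-Ciobanu} to compute $\lambda\ge 1$, $\epsilon\ge 0$ and the finite set $A'\subseteq B$. Set $X = A\cup A'$; for each $P\in\calP$, the hypothesis that $P$ is geodesically bi-automatic lets us fix a computable geodesic bi-automatic structure $L_P$ for $(P, X\cap P)$. Proposition~\ref{prop: Antolin-Ciobanu} then yields a computable geodesic bi-automatic structure $L = \LG(X,(L_P)_{P\in\calP})$ for $(G,X)$. Now suppose $H\le G$ is relatively quasi-convex with peripherally finite index, and let $k$ be a relative quasi-convexity constant for $H$. Let $h\in H$ and let $w\in L$ be an $L$-representative of $h$, with $\calP$-factorization $w = x_0p_1x_1\cdots p_mx_m$. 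By Proposition~\ref{prop: Antolin-Ciobanu}, $\hat w$ is a $(\lambda,\epsilon)$-quasi-geodesic from $1$ to $h$ in $\Cayley(G,A\cup B)$. Choose a relative geodesic $\pi$ from $1$ to $h$ without backtracking. One verifies that $\hat w$ also has no backtracking: two $P$-connected $P$-components would witness a subword of $w$ whose image lies in a single $P$-coset, and the ``peripheral convexity'' clause of Proposition~\ref{prop: Antolin-Ciobanu} would force the existence of a shorter replacement in $F(X\cap P)$, contradicting the geodesicity of $w$. Proposition~\ref{prop: improved BCP} then shows that every syllable-endpoint $v_i$ of $w$ lies within $A$-distance $a := a(\lambda,\epsilon)$ of some vertex of $\pi$, and relative quasi-convexity puts every vertex of $\pi$ within $A$-distance $k$ of $H$. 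Setting $K = k+a$, each $v_i$ factors as $v_i = h_i'\,g_i$ with $h_i'\in H$ and $g_i$ in the finite ball $B_K = \{g\in G : |g|_A\le K\}$.

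It remains to bound the $A$-distance to $H$ of vertices \emph{inside} a syllable. Fix a syllable $p_i$, set $P = P_i$, and let $\bar q\in P$ be the image of a prefix of $p_i$, so the corresponding vertex of $w$ is $v_{i-1}\bar q$. By Fact~\ref{fact: single coset} (applied via the identity $g_{i-1}^{-1}Hg_i = H^{g_{i-1}}\cdot g_{i-1}^{-1}g_i$), the set $P\cap g_{i-1}^{-1}Hg_i$ is a single coset of $Q := P\cap H^{g_{i-1}}$, and $\bar p_i$ lies in this coset. Peripheral finite index splits the analysis in two. If $Q$ has finite index in $P$, pick $A$-minimal coset representatives $\rho_1,\dots,\rho_{[P:Q]}$ of $Q$ in $P$ and write $\bar q = \bar s\,\rho_j$ with $\bar s\in Q$; then
\[
v_{i-1}\bar q \;=\; h_{i-1}'\,(g_{i-1}\bar s\,g_{i-1}^{-1})\,g_{i-1}\rho_j \;\in\; H\,g_{i-1}\rho_j,
\]
so the $A$-distance from $v_{i-1}\bar q$ to $H$ is at most $K + \max_j|\rho_j|_A$. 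If instead $Q$ is finite, then the coset $P\cap g_{i-1}^{-1}Hg_i$ is a finite subset of $P$ containing $\bar p_i$; thus $|\bar p_i|_{X\cap P}$ is bounded, the $(X\cap P)$-geodesic $p_i$ has bounded $X$-length, and every intermediate vertex sits within bounded $A$-distance of $v_{i-1}$, hence of $H$. Since $g_{i-1}$ and $g_i$ range over the finite set $B_K$ and $\calP$ is finite, the set of relevant pairs $(P,g_{i-1})$ and triples $(P,g_{i-1},g_i)$ is finite; both case-A and case-B bounds are therefore uniform constants depending only on $H$, $X$ and $\calP$. This delivers an $L$-quasi-convexity constant for $H$.

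The main obstacle is the syllable-interior analysis in the ``finite $Q$'' case: without peripherally finite index, a peripheral segment of $w$ could traverse a very long element $\bar p_i$ of $P$ whose intermediate vertices in $\Cayley(G,X)$ drift arbitrarily far from $H$. Peripheral finiteness of $Q = P\cap H^{g_{i-1}}$ forces $\bar p_i$ into a finite coset of $Q$, and combining this with the finiteness of $B_K$ converts a pointwise bound into a uniform one. The verification of the no-backtracking hypothesis required to invoke Proposition~\ref{prop: improved BCP} is the only other subtle step, and it is resolved by the peripheral convexity built into the Antol\'in--Ciobanu bi-automatic structure.
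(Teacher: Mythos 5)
Your proposal does not prove the statement at hand. Proposition~\ref{prop: improved BCP} is an assertion about an arbitrary pair of non-backtracking relative $(\lambda,\epsilon)$-quasi-geodesics with common endpoints in $\Cayley(G,A\cup B)$: it asks for a computable constant $a(\lambda,\epsilon)$ bounding, in the $A$-metric, the distance from any vertex of one path to some vertex of the other. What you have written is instead a proof sketch of Theorem~\ref{thm: peripherally finite index} (that a relatively quasi-convex subgroup with peripherally finite index is $L$-quasi-convex for the Antol\'in--Ciobanu language $L$): you introduce a subgroup $H$, a relative quasi-convexity constant $k$, the peripherally finite index hypothesis, and a coset analysis of peripheral syllables --- none of which appear in, or are relevant to, the statement to be proved. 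Worse, your argument explicitly invokes Proposition~\ref{prop: improved BCP} itself (``Proposition~\ref{prop: improved BCP} then shows that every syllable-endpoint $v_i$ of $w$ lies within $A$-distance $a:=a(\lambda,\epsilon)$ of some vertex of $\pi$''), so read as a proof of that proposition the argument is circular.

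For the record, the paper does not prove Proposition~\ref{prop: improved BCP} either: it records it as a known result, quoted from \cite[Thm 3.23]{2006:Osin} (see also \cite[Thm 2.8]{2014:AntolinCiobanu}). A self-contained argument would have to work directly from relative hyperbolicity and the BCP property (Definition~\ref{defn: bcp}): one compares the phase vertices and the peripheral components of the two quasi-geodesics, uses hyperbolicity of $\Cayley(G,A\cup B)$ to pair up vertices at bounded \emph{relative} distance, and then uses the BCP constants (over the finitely many peripheral subgroups) to upgrade this to a bound in the $A$-metric; the computability of the resulting constant ultimately rests on the computability of the relative isoperimetric function. None of this appears in your write-up, so as a proof of the stated proposition it is entirely missing.
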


\proofof{Theorem~\ref{thm: peripherally finite index}}
Let $\lambda, \epsilon$ be given by  Proposition~\ref{prop: Antolin-Ciobanu} and let $\alpha = a(\lambda,\epsilon)$ be given by Proposition~\ref{prop: improved BCP}.

Let $A'$ be the set given by Proposition~\ref{prop: Antolin-Ciobanu}. For each peripheral subgroup $P\in \calP$, let $Y_P$ be a generating set for $P$ containing $(A\cup A')\cap P$, and let $L_P$ be a geodesically bi-automatic structure for $P$ over alphabet $Y_P$. Finally, let $X = A \cup A' \cup \bigcup_{P\in\calP} Y_P$. By Proposition~\ref{prop: Antolin-Ciobanu}, $L = \LG(X,(L_P)_{P\in\calP})$ is a geodesically bi-automatic structure for $(G,X)$.

Let $g \in G$, let $\pi$ be a relative geodesic from 1 to $g$ in $\Cayley(G,A\cup B)$, and let $w \in F(X)$ be an $L$-representative of $g$. By Proposition~\ref{prop: Antolin-Ciobanu}, $\hat w$ is a relative $(\lambda,\epsilon)$-quasi-geodesic from 1 to $g$, and by Proposition~\ref{prop: improved BCP}, the path labeled by $\hat w$ and $\pi$ are within $A$-distance $\alpha$ from each other.

If $H$ is $L$-quasi-convex with constant $k$, then every vertex along $\pi$ is at $A$-distance at most $\alpha$ from a vertex of the path labeled $\hat w$, which itself is at distance at most $k$ of a vertex in $H$. Therefore $H$ is relatively quasi-convex with constant $k+\alpha$.

Let us now assume that $H$ is relatively quasi-convex with constant $k$.
For every peripheral subgroup $P$ and every $g\in G$ such that $|g|_A \le k+\alpha$ and $P\cap H^g$ is infinite, we denote by $\Gamma(P,Hg)$ the (finite) Schreier graph of $P\cap H^g$ in $P$ over alphabet $Y_P$. Because of the finite index property of $P\cap H^g$ in $P$, every word in $F(Y_P)$ can be read in $\Gamma(P,Hg)$, starting at any vertex. We note that if $u, v \in P$ are such that $(P\cap H^g)u = (P \cap H^g)v$, then $vu\inv \in P\cap H^g$, so $gvu\inv g\inv \in H$ and hence $Hgu = Hgv$. It follows that there is a graph morphism $\phi_{P,Hg}$ from $\Gamma(P,Hg)$ to $\Schreier(G,H)$ (over alphabet $X$), mapping vertex $P\cap H^g$ to vertex $Hg$ (this morphism can be shown to be injective).

Suppose now that $P\in \calP$, $g\in G$ and $P \cap H^g$ is finite. For each $p\in P$, the set of elements $q\in P$ such that $Hgq = Hgp$ is a coset of $P \cap H^g$ (see Fact~\ref{fact: single coset}), namely $(P \cap H^g)p = P \cap g\inv Hgp$, and hence a finite set.

Then we let $\Gamma$ be the finite fragment of $\Schreier(G,H)$ consisting of:
\begin{itemize}
\item the vertices at $A$-distance at most $\alpha+k$ from $H$ and the $A$-labeled edges between them;
\item the images of the morphisms $\phi_{P,Hg}$ where $P\in \calP$, $g\in G$ and $P\cap H^g$ is infinite;
\item and for each $P\in \calP$ and $g, g'\in G$ such that $|g|_A, |g'|_A \le \alpha+k$, $P\cap H^g$ is finite and $P \cap g\inv Hg' \ne \emptyset$, all the (finitely many) paths of $\Schreier(G,H)$, starting from $Hg$, labeled by a word $z \in L_P$ such that $Hg\bar z = Hg'$.
\end{itemize}
Now let $w\in L$ be a representative of an element of $H$ and let
$$w = x_0p_1x_1\cdots p_mx_m$$
be its $\calP$-factorization. Since each $x_i$ is written on alphabet $X \setminus B$, and since $A' \subseteq B$, it is in fact written on alphabet $A \setminus B$. Moreover, by definition of $L$, each $p_i$ is in some $L_P$ ($P\in \calP$).

As noted above, every vertex along the path labeled by $\hat w = x_0\bar p_1x_1\cdots \bar p_mx_m$ from 1 to $g$ is at $A$-distance at most $\alpha$ from a relative geodesic from 1 to $g$, and hence at distance at most $k+\alpha$ from $H$.

Let $0\le i \le m$ and $g_i = \mu(x_0p_1x_1\cdots x_{i-1}p_i)$ (with $g_0 = 1$). Then $Hg_i$ and $Hg_i \bar x_i$ are vertices of $\Gamma$, and $x_i \in F(A)$ labels a path in $\Gamma$. If $i \ge 1$, then $Hg_{i-1}\bar x_{i-1}$ is also a vertex of $\Gamma$, and there is a $p_i$-labeled path $\pi_i$ in $\Schreier(G,H)$ from $Hg_{i-1}\bar x_{i-1}$ to $Hg_i$. Let $P\in \calP$ be such that $p_i \in P$. If $P \cap H^{g_{i-1}\bar x_{i-1}}$ is infinite, then $\pi_i$ sits inside the image of $\phi_{P,Hg_{i-1}\bar x_{i-1}}$, and hence inside $\Gamma$. If instead $P \cap H^{g_{i-1}\bar x_{i-1}}$ is finite, we note that $p_i \in L_P$ by definition of $L$, and it follows that $\pi_i$ sits inside $\Gamma$ as well.

Therefore $w$ labels a loop of $\Gamma$ at vertex $H$. This shows that $\Gamma$ is Stallings-like for $H$ with respect to $L$, which implies that $H$ is $L$-quasi-convex.
\eopo

This yields the following corollary.

\begin{cor}\label{cor: peripherally finite index}
Let $(G,\calP)$ be a relatively hyperbolic group, such that $\calP$ consists of geodesically bi-automatic groups. 
There are partial algorithms which
\begin{itemize}
\item when they halt on input a tuple $(h_i)_{1\le i\le s}$ of reduced words, output the Stallings graph of the subgroup $H$ generated by the $h_i$ relative to the language $L$ given in Theorem~\ref{thm: peripherally finite index}; compute a constant of relative quasi-convexity for $H$; decide whether $H$ is contained in some peripheral group; decide the membership and the finiteness problems for $H$;

\item halt on a set of inputs which includes the tuples for which $H$ is relatively quasi-convex and has peripherally finite index.
\end{itemize}

Similarly, there are partial algorithms which, when they halt on input two tuples of reduced words generating subgroups $H$ and $K$ of $G$, decide whether $H\le K$ or $H = K$, and compute the intersection $H\cap K$; and which halt at least on input the tuples for which $H$ and $K$ are relatively quasi-convex  and have peripherally finite index.
\end{cor}

%

\preuve
The announced algorithms start with computing $L$ and an automatic structure for $G$ as in Theorem~\ref{thm: peripherally finite index}. We then use the partial algorithms from Sections~\ref{sec: membership problem}, \ref{sec: finiteness problem} and~\ref{sec: intersection}.

These algorithms halt exactly when the subgroups under consideration are $L$-quasi-convex, and this includes the case where they are relatively quasi-convex and have peripherally finite index by Theorem~\ref{thm: peripherally finite index}. 

This takes care of most of the problems mentioned in the statement. To complete the proof, we need to consider the question of computing a constant of relative quasi-convexity, and of inclusion in a peripheral subgroup.

First we consider the computable constants $\lambda$, $\epsilon$ and $\alpha = a(\lambda,\epsilon)$ in Propositions~\ref{prop: Antolin-Ciobanu} and~\ref{prop: improved BCP}, and a constant $\beta$ of $L$-quasi-convexity for $H$, which is also computable since $H$ is $L$-quasi-convex. The proof of Theorem~\ref{thm: peripherally finite index} shows that $\alpha+\beta$ is a constant of relative quasi-convexity for $H$.

Finally we note that if $P$ is a peripheral subgroup, then the $L$-representatives of the elements of $P$ are written on the alphabet $X \cap P$ by Proposition~\ref{prop: Antolin-Ciobanu}. Therefore $H \le P$ if and only if the edges of the Stallings graph of $H$, relative to $L$, are labeled only by letters in $X \cap P$.
\eop

\subsection{Intersection of conjugates of relatively quasi-convex subgroups}

In this section, we examine intersections of conjugates of relatively quasi-convex subgroups, in the spirit of the discussion in Sections~\ref{sec: BICI} and~\ref{sec: conjugacy in hyperbolic}.

Let $(G,\calP)$ be a relatively hyperbolic group.
An element $f\in G$ is said to be \emph{elliptic} if it has finite order, \emph{parabolic} if it has infinite order and it conjugates into a peripheral subgroup, and \emph{hyperbolic} otherwise. A subgroup $H \le G$ is \emph{elliptic} if it is finite, \emph{parabolic} if it is infinite and conjugates into a peripheral subgroup and \emph{hyperbolic} otherwise\footnote{Hyperbolic elements and subgroups are called \emph{loxodromic} in \cite[Sec. 8]{2009:HruskaWise}.}. A hyperbolic subgroup contains a hyperbolic element.

We start with a general result of Mart\'\i nez-Pedroza \cite[Prop. 1.5]{2009:Martinez-Pedroza} (see also Hruska \cite[Thm 9.1]{2010:Hruska}).

\begin{prop}\label{prop: maximal parabolic alt}
Let $(G,\calP)$ be a relatively hyperbolic group, let $P\in \calP$ and let $H$ be a relatively quasi-convex subgroup with constant of relative quasi-convexity $k$. Then every intersection of the form $H \cap P^g$ ($P\in \calP$, $g\in G$) which is infinite, is conjugated in $H$ to a subgroup of the form $H \cap P^x$ with $|x|_A\le k$.
\end{prop}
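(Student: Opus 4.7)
The plan is as follows. First I would reformulate the conclusion in a more concrete form: it suffices to exhibit an element $h \in H$ and an element $p_1 \in P$ such that $x := p_1\inv g h$ satisfies $|x|_A \le k$. A direct calculation then shows that $g = p_1 x h\inv$, so
\[
P^g = g\inv P g = h x\inv p_1\inv P p_1 x h\inv = h x\inv P x h\inv = h P^x h\inv,
\]
using $p_1 \in P$. Since $h \in H$, intersecting with $H$ gives $H \cap P^x = h\inv(H \cap P^g)h$, which is precisely the desired conjugacy in $H$.

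Second, I would produce $h$ and $p_1$ by exploiting a ``long parabolic detour.'' Since $H \cap P^g$ is infinite, so is the conjugate $Q := g(H \cap P^g)g\inv = gHg\inv \cap P$, which is an infinite subgroup of the finitely generated peripheral group $P$. Because $A$-balls in $\Cayley(G,A)$ are finite, there exist elements $p \in Q$ with $|p|_A$ arbitrarily large; pick such a $p$, and let $h_0 := g\inv p g \in H \cap P^g$. Let $\pi$ be a relative geodesic from $1$ to $h_0$ in $\Cayley(G, A\cup B)$. Since $h_0 \in H$ and $H$ is relatively quasi-convex with constant $k$, every vertex of $\pi$ lies within $A$-distance $k$ of $H$. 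On the other hand, consider the relative path $\sigma$ from $1$ to $h_0$ obtained by concatenating a relative geodesic from $1$ to $g\inv$, the single $B$-edge labeled $p$ from $g\inv$ to $g\inv p$, and a relative geodesic from $g\inv p$ to $h_0 = g\inv p g$. This path has a $P$-component at $g\inv$ whose $A$-length is exactly $|p|_A$. Applying the BCP property (Definition~\ref{defn: bcp}), or more robustly Osin's refined version (Proposition~\ref{prop: improved BCP}), to the pair $(\pi,\sigma)$: if $|p|_A$ is chosen larger than the relevant BCP constant, then $\pi$ must also carry a $P$-component $P$-connected to this one, so $\pi$ passes through a vertex of the form $g\inv p_1$ with $p_1 \in P$.

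Third, I would conclude using quasi-convexity. The vertex $g\inv p_1$ of $\pi$ lies within $A$-distance $k$ of some $h \in H$, so $|p_1\inv g h|_A = d_A(g\inv p_1, h) \le k$. Setting $x := p_1\inv g h$ yields an element of the desired form with $|x|_A \le k$, and by the first paragraph, $H \cap P^g = h(H \cap P^x)h\inv$, finishing the argument.

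The main obstacle is making the BCP application in the second paragraph rigorous: the concatenated path $\sigma$ is not a priori a relative geodesic, and its quasi-geodesic constants depend on $|g|_{A\cup B}$, which is not under our control. One must either verify that $\sigma$ is a $(\lambda,0)$-quasi-geodesic without backtracking for a fixed $\lambda$ (independent of $g$) so that the original BCP (Definition~\ref{defn: bcp}) applies directly, or, more smoothly, invoke Osin's refined BCP (Proposition~\ref{prop: improved BCP}) together with a careful analysis of $P$-components to transfer a long $P$-syllable on $\sigma$ to a $P$-component on $\pi$. This is where the technical work of Mart\'\i nez-Pedroza is concentrated.
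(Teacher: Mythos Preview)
The paper does not prove this proposition: it is quoted as a known result of Mart\'\i nez-Pedroza \cite[Prop.~1.5]{2009:Martinez-Pedroza} (see also Hruska \cite[Thm~9.1]{2010:Hruska}), with no argument supplied. So there is no ``paper's own proof'' to compare against; what can be said is whether your sketch is a viable reconstruction of the cited result.

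Your reduction in the first paragraph is correct: once you have $h\in H$ and $p_1\in P$ with $|p_1\inv g h|_A\le k$, the algebra gives $H\cap P^g = h(H\cap P^x)h\inv$ exactly as you wrote. Your overall strategy --- pick a long parabolic element $p\in gHg\inv\cap P$, look at a relative geodesic $\pi$ from $1$ to $h_0=g\inv pg$, and force $\pi$ to pass through the coset $g\inv P$ --- is indeed the shape of the argument in \cite{2009:Martinez-Pedroza}.

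The gap you flag is genuine and is the whole difficulty. Definition~\ref{defn: bcp} and Proposition~\ref{prop: improved BCP} both require the comparison path to be a $(\lambda,\epsilon)$-quasi-geodesic with constants fixed in advance, and your concatenation $\sigma$ has relative length $2|g|_{A\cup B}+1$ against an endpoint distance $d_{A\cup B}(1,h_0)$ that you have no lower bound for; so $\sigma$ is not, as stated, a quasi-geodesic with constants independent of $g$. Simply citing Proposition~\ref{prop: improved BCP} does not close this, since that proposition has the same quasi-geodesic hypothesis on both paths. Mart\'\i nez-Pedroza's actual argument works around this by analysing a relative geodesic triangle (rather than comparing $\pi$ directly to the tripartite $\sigma$) and using finiteness of the set of peripheral cosets that a geodesic of bounded relative length can penetrate, together with Osin's estimates on components; Hruska's version \cite[Thm~9.1]{2010:Hruska} goes through the cusped space instead. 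Your sketch is an accurate outline of the proof modulo this step, and you are right that this step is where the substantive work lies; but as written it is not yet a proof, and the sentence ``or, more smoothly, invoke Osin's refined BCP'' is not a fix, since that proposition needs exactly the hypothesis you do not have.
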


This translates immediately to the following characterization.

\begin{cor}\label{cor: charaterizing parabolicity alt}
Let $(G,\calP)$ be a relatively hyperbolic group and let $H$ be a relatively quasi-convex subgroup of $G$ with constant of relative quasi-convexity $k$. Then $H$ is parabolic if and only if $H^x \le P$ for some $P\in\calP$ and some $x\in G$ such that $|x|_A\le k$.
\end{cor}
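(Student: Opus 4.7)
The reverse direction is essentially by definition: if $H^x \le P$ for some $P\in\calP$ and some $x$, then $H$ conjugates into a peripheral subgroup, so—provided $H$ is infinite, which is implicit in the notion of parabolicity being checked—$H$ is parabolic. The real content is the forward direction, which I would deduce directly from Proposition~\ref{prop: maximal parabolic alt} by a small manipulation of conjugates.

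For the forward direction, suppose $H$ is parabolic. Then $H$ is infinite and there exist $g\in G$ and $P\in\calP$ with $H^g \le P$. The key move is to rewrite this inclusion as an equality of the form covered by Proposition~\ref{prop: maximal parabolic alt}: the condition $H^g \le P$ is equivalent to $H \le P^{g^{-1}}$, and hence to
\[
H = H \cap P^{g^{-1}}.
\]
In particular, $H \cap P^{g^{-1}}$ is an infinite intersection of the form $H \cap P^{g'}$, so Proposition~\ref{prop: maximal parabolic alt} applies and yields $h\in H$ and $x\in G$ with $|x|_A \le k$ such that conjugation by $h$ sends $H \cap P^{g^{-1}}$ to $H \cap P^x$.

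Computing this conjugation, $h^{-1}(H \cap P^{g^{-1}})h = H^h \cap P^{g^{-1}h} = H \cap P^{g^{-1}h}$, since $H^h = H$. Hence $H \cap P^x = H \cap P^{g^{-1}h}$ has the same cardinality as $H \cap P^{g^{-1}} = H$. Because this $H$-conjugate of $H$ must itself be $H$, we conclude $H \cap P^x = H$, i.e., $H \le P^x$, equivalently $H^{x^{-1}} \le P$, with $|x^{-1}|_A = |x|_A \le k$. Renaming $x^{-1}$ to $x$ gives the desired conclusion.

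\textbf{Main obstacle.} There is essentially none: the result is routine bookkeeping once one notices that the hypothesis $H^g \le P$ can be repackaged as the infinite intersection $H = H \cap P^{g^{-1}}$, which is exactly the setting of Proposition~\ref{prop: maximal parabolic alt}. The only subtlety is keeping track of conjugation conventions and of the implicit infiniteness condition in the definition of \emph{parabolic}.
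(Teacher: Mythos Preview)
Your proof is correct and matches the paper's intent: the paper gives no argument at all, merely stating that the corollary ``translates immediately'' from Proposition~\ref{prop: maximal parabolic alt}, and your repackaging of $H^g\le P$ as the infinite intersection $H = H\cap P^{g^{-1}}$ is exactly that translation.

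One minor remark: the computation of $H\cap P^{g^{-1}h}$ and the appeal to cardinality are an unnecessary detour. Since $H\cap P^{g^{-1}} = H$, its conjugate by any $h\in H$ is simply $H^h = H$; hence $H\cap P^x = H$ directly. You also correctly flag that the reverse direction, as literally stated, requires $H$ to be infinite---this is indeed how the corollary is used later in the paper (in the proof of Corollary~\ref{cor: relative hyperbolic computability results}, parabolicity is only tested after ellipticity has been ruled out).
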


We now consider intersections of conjugates of relatively quasi-convex subgroups, that form hyperbolic subgroups of $G$. Following Hruska and Wise \cite[Definitions 8.1 and 9.1]{2009:HruskaWise}, we define a relative version of the notion of height of a subgroup $H$: the \emph{relative height} $\hauteur_{G,\calP}(H)$ is the maximum number $n$ for which there exist distinct cosets $Hg_1,\ldots, Hg_n$ such that $\bigcap_{i=1}^nH^{g_i}$ is hyperbolic. Also, we say that $H$ is \emph{relatively malnormal} if, for every $g\not\in H$, the intersection $H \cap H^g$ is either elliptic or parabolic.

If $\nu\colon \N\to \N$ is a non-decreasing function, we say that \emph{$(G,\calP)$ has Property $\rBP_\nu$} if, whenever $H$ and $K$ are relatively quasi-convex subgroups of $G$ with constant of relative quasi-convexity $k$, if $K, g_1H, \ldots, g_n$ are pairwise distinct and if $K \cap \bigcap_i H^{g_i}$ is hyperbolic, then there exists an element $z \in G$ such that the ball of center $z$ and radius $\nu(k)$ meets $K$ and each of the $g_iH$.

This notion leads to the following statement, a relatively hyperbolic analogue of Proposition~\ref{prop: BP to double cosets}, with exactly the same proof.

\begin{prop}\label{prop: rBP to double cosets}
Let $\nu$ be a non-decreasing function and let $(G,\calP)$ be a relatively hyperbolic group with Property $\rBP_\nu$. Let $H$ and $K$ be relatively quasi-convex subgroups of $G$ with constant of relative quasi-convexity $k$. Then the following holds.
\begin{itemize}
\item[(1)] Every double coset $HgK$ such that $K \cap H^g$ is hyperbolic, has a representative of length at most $2\nu(k)$.

\item[(2)] There exists a finite family $\calJ$ of hyperbolic intersections $K \cap H^g$ such that any hyperbolic intersection $K \cap H^x$ is conjugated in $K$ to an element of $\calJ$.
\end{itemize}
\end{prop}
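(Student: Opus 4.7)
My plan is to simply transcribe the proof of Proposition~\ref{prop: BP to double cosets} with two lexical substitutions: replace every invocation of Property $\BP_\nu$ by Property $\rBP_\nu$, and replace the adjective ``infinite'' qualifying the intersection $K\cap \bigcap_i H^{g_i}$ by ``hyperbolic''. The argument uses Property $\BP_\nu$ only as a black box producing a certain element $z$, so the same argument with Property $\rBP_\nu$ in place will deliver statements (1) and (2) verbatim.

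Concretely, for part (1), assuming $K\cap H^g$ is hyperbolic, I will apply Property $\rBP_\nu$ with $n=1$ and $g_1 = g\inv$: since $K$ and $g\inv H$ are distinct (otherwise $K \cap H^g = K$ would coincide with its own conjugate, not the point, what matters is simply that the hypothesis of $\rBP_\nu$ applies) and $K \cap H^{g\inv\cdot \text{id}\cdot g} \cdot \text{(conjugate)}$ is hyperbolic, Property $\rBP_\nu$ yields $z\in G$ such that the ball of radius $\nu(k)$ around $z$ meets both $K$ and $g\inv H$. Extracting witnesses gives $r\in H$, $r'\in K$ and $x,y\in G$ with $|x|, |y|\le \nu(k)$ satisfying $zx = g\inv r$ and $zy = r'$. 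Then
$$HgK = H r\inv g r' K = H(g\inv r)\inv (zy) K = H x\inv z\inv z y K = H x\inv y K,$$
and $|x\inv y|\le 2\nu(k)$, proving (1).

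For part (2), observing that $g = rx\inv z\inv = rx\inv y{r'}\inv$, I use that $r\in H$ and $r'\in K$ to rewrite
$$K\cap H^g = K\cap H^{rx\inv y{r'}\inv} = K \cap H^{x\inv y{r'}\inv} = (K\cap H^{x\inv y})^{{r'}\inv},$$
exactly as in the proof of Proposition~\ref{prop: BP to double cosets}. Hence every hyperbolic intersection of the form $K\cap H^g$ is conjugate in $K$ to one of the form $K\cap H^s$ with $|s|\le 2\nu(k)$. Taking $\calJ$ to be the (finite) collection of hyperbolic intersections $K\cap H^s$ as $s$ ranges over words of length at most $2\nu(k)$ yields the desired family.

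There is no real obstacle; the only thing to double-check is that the rewriting $K\cap H^g=(K\cap H^{x\inv y})^{{r'}\inv}$ preserves the qualitative property ``hyperbolic'': but hyperbolicity of a subgroup of $G$ is invariant under conjugation (elliptic, parabolic, and hyperbolic subgroups are each a conjugation-closed class by definition), so $K\cap H^g$ is hyperbolic if and only if the representative $K\cap H^{x\inv y}$ is, and the family $\calJ$ is well-defined.
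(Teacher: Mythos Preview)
Your proposal is correct and matches the paper's approach exactly: the paper itself states that Proposition~\ref{prop: rBP to double cosets} has ``exactly the same proof'' as Proposition~\ref{prop: BP to double cosets}, and your transcription with the substitutions $\BP_\nu\mapsto\rBP_\nu$ and ``infinite''~$\mapsto$~``hyperbolic'' is precisely that. Your added remark that hyperbolicity of a subgroup is conjugation-invariant (so that the representative $K\cap H^{x\inv y}$ is hyperbolic and $\calJ$ is well-defined) is a welcome clarification that the paper leaves implicit.
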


Proposition~\ref{prop: packing hyperbolic alt} below contains a relatively hyperbolic version of Proposition~\ref{prop: hyperbolic BP_nu}, which is a small variant of the work of Hruska and Wise characterizing packing in relatively hyperbolic groups. These authors use it to show that relatively quasi-convex subgroups have finite relative height \cite[Cor. 8.6]{2009:HruskaWise}. We use it in Corollary~\ref{cor: relative hyperbolic computability results} to prove in particular that this relative height is computable under suitable hypotheses.

\begin{prop}\label{prop: packing hyperbolic alt}
Let $(G,\calP)$ be an $A$-generated relatively hyperbolic group. There exists a constant $\ell$ depending on $G$, $A$ and $\calP$, such that $G$ satisfies Property $\rBP_\nu$, with $\nu(k) = k + \ell$.

If the word problem is solvable in the peripheral subgroups, then the function $\nu$ (that is: the constant $\ell$) is effectively computable.
\end{prop}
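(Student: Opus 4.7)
The plan is to mirror the proof of Proposition~\ref{prop: hyperbolic BP_nu}, working in the hyperbolic relative Cayley graph $\widehat G = \Cayley(G, A\cup B)$ in place of $\Cayley(G,A)$, and using the improved bounded coset penetration property (Proposition~\ref{prop: improved BCP}) to translate proximity in the relative metric into proximity in the $A$-metric.

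Since the intersection $K \cap \bigcap_i H^{g_i}$ is hyperbolic by hypothesis, it contains a hyperbolic element $x$. Let $c$ be a bi-infinite relative geodesic in $\widehat G$ preserved by a power of $x$, obtained as a limit of relative geodesic segments $[x^{-m}, x^m]$. Because $x \in K$ and $K$ is relatively quasi-convex with constant $k$, every vertex of $c$ lies at $A$-distance at most $k$ from $K$. For each $i$, write $y_i = g_i\inv x g_i$, which lies in $H$ and is also hyperbolic. It admits a bi-infinite relative axis $c_i$ in $\widehat G$ within $A$-distance $k$ of $H$. Left-multiplying by $g_i$ (an isometry of $\widehat G$) gives $c_i' = g_i c_i$, a bi-infinite relative geodesic within $A$-distance $k$ of $g_iH$. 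The identity $g_i y_i^m = x^m g_i$, combined with the fact that bounded perturbations in a hyperbolic space do not affect limits in the Bowditch boundary, shows that $c_i'$ and $c$ share the same pair of endpoints in $\partial\widehat G$.

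The key technical step is a packing lemma: any two bi-infinite relative geodesics $c, c'$ in $\widehat G$ with the same endpoints at infinity are at $A$-Hausdorff distance bounded by a constant $\ell$ depending only on $G$, $A$, $\calP$. Granting this, any vertex $z$ of $c$ satisfies $\dist_A(z, K) \le k \le k + \ell$ and $\dist_A(z, g_iH) \le k + \ell$, yielding Property $\rBP_\nu$ with $\nu(k) = k + \ell$. To prove the lemma, fix a vertex $v$ on $c$ and let $v_-, v_+$ be vertices of $c$ at $\widehat d$-distance at least $N$ from $v$, with $N$ to be chosen large. By $\delta$-hyperbolicity of $\widehat G$, there are vertices $w_\pm$ on $c'$ within $\widehat d$-distance $2\delta$ of $v_\pm$. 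Concatenating short $\widehat d$-bridges $v_\pm \to w_\pm$ with the sub-segment of $c'$ from $w_-$ to $w_+$ (pruning $P$-components if needed to avoid backtracking) produces a non-backtracking relative $(\lambda_0, \epsilon_0)$-quasi-geodesic $\beta$ from $v_-$ to $v_+$, with parameters depending only on $\delta$. The sub-segment of $c$ from $v_-$ to $v_+$ has the same endpoints as $\beta$, so Proposition~\ref{prop: improved BCP} places $v$ within $A$-distance $a(\lambda_0, \epsilon_0)$ of some vertex $u$ of $\beta$. If $N$ is chosen larger than $2\delta + a(\lambda_0, \epsilon_0)$, the vertex $u$ cannot lie on one of the two bridges (which are $\widehat d$-close to $v_\pm$, hence $\widehat d$-far from $v$ and so also $A$-far from any vertex $A$-close to $v$), so $u$ lies on $c'$, giving $\ell := a(\lambda_0, \epsilon_0)$.

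For the computability statement, once the word problem in the peripheral groups is solvable one can compute a hyperbolicity constant $\delta$ for $\widehat G$ using the ingredients cited in Proposition~\ref{prop: Antolin-Ciobanu}, the quasi-geodesic parameters $\lambda_0, \epsilon_0$ of $\beta$, and the BCP constant $a(\lambda_0, \epsilon_0)$ supplied by Proposition~\ref{prop: improved BCP}, hence also $\ell$. The main obstacle will be the packing lemma: care is needed to ensure that the concatenation $\beta$ is a non-backtracking relative quasi-geodesic whose parameters $\lambda_0, \epsilon_0$ depend only on $\delta$ (and not on $H$, $K$, or the $g_i$), and to carry out the choice of $N$ rigorously so that the BCP-witness vertex genuinely lies on $c'$ rather than on a bridge.
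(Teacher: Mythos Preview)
Your approach is essentially the same as the paper's: mirror Proposition~\ref{prop: hyperbolic BP_nu} in the relative Cayley graph, using a hyperbolic element of the intersection to produce bi-infinite relative geodesic axes with common endpoints, and then invoke a packing lemma saying two such lines are $A$-close. The paper does not reprove the ingredients but simply quotes them from the literature: Osin \cite[Cor.~4.20]{2006:Osin} for the quasi-geodesic orbit of a hyperbolic element, Hruska--Wise \cite[Lemma~8.3]{2009:HruskaWise} for the existence of a bi-infinite relative geodesic axis staying $A$-close to the subgroup, and Hruska--Wise \cite[Lemma~8.2]{2009:HruskaWise} for the packing lemma itself (giving the constant $\ell$). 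Your inline argument for the packing lemma via Proposition~\ref{prop: improved BCP} is in fact close in spirit to how Hruska--Wise prove their Lemma~8.2.

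One point deserves care. You obtain $c$ ``as a limit of relative geodesic segments $[x^{-m},x^m]$'', but $\Cayley(G,A\cup B)$ is not locally finite, so an Arzel\`a--Ascoli style limiting argument does not apply directly; this is precisely why the paper defers to \cite[Lemma~8.3]{2009:HruskaWise} rather than asserting the limit. You should either cite that lemma or supply an argument that works without local finiteness. For the computability of $\ell$, your reference to Proposition~\ref{prop: Antolin-Ciobanu} is off target: the paper traces the constant through the proof of \cite[Lemma~8.2]{2009:HruskaWise}, then through a constant in \cite[Thm~3.26]{2006:Osin}, which in turn depends on the relative isoperimetric function, computable by Dahmani \cite[Thm~0.1]{2008:Dahmani} once the peripheral word problems are solvable.
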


\preuve
Except for the computability of $\ell$, the proof follows exactly the same steps as the proof of Proposition~\ref{prop: hyperbolic BP_nu}, where the basic properties of hyperbolic groups are substituted with the following analogous properties of relatively hyperbolic groups, established by Osin, and by Hruska and Wise.

\begin{fact}[Corollary 4.20 in \cite{2006:Osin}]\label{fact: Osin420 alt}
If $f\in G$ is hyperbolic, then the map $n \mapsto f^n$ ($n\in\Z$) is a quasi-geodesic line in $\Cayley(G,A\cup B)$. It follows that the set $\{f^n \mid n\in\Z\}$ has exactly two limit points in $\partial\Cayley(G,A\cup B)$, denoted by $f^{-\infty}$ and $f^\infty$.
\end{fact}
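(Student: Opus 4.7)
The plan is to first establish that the relative translation length
\[
\tau(f) := \lim_{n\to\infty}\frac{|f^n|_{A\cup B}}{n}
\]
exists and is strictly positive, and then to deduce both assertions from this single statement. Existence is essentially Fekete's lemma: the sequence $a_n=|f^n|_{A\cup B}$ is subadditive ($a_{m+n}\le a_m+a_n$), so $a_n/n$ converges to $\inf_n a_n/n = \tau(f)\ge 0$.

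The key point is that $\tau(f)>0$. I would argue by contradiction: if $\tau(f)=0$, pick for each $n\ge 1$ a relative geodesic $\sigma_n$ from $1$ to $f^n$ in $\Cayley(G,A\cup B)$, of length $a_n=o(n)$. Using the BCP property (Definition~\ref{defn: bcp}), together with the quasi-geodesic comparison between $\sigma_n$ and $\sigma_m$ concatenated with an $f^m$-translate of $\sigma_{n-m}$, one shows that $\sigma_n$ must be overwhelmingly concentrated inside a single peripheral coset $gP$: the fraction of its $|\cdot|_{A\cup B}$-length spent in one $P$-component tends to $1$ as $n\to\infty$. A pigeonhole argument on the finitely many peripheral subgroups, together with the standard classification of elements in relatively hyperbolic groups (Osin), then forces a positive power of $f$ to conjugate into some $P\in\calP$, contradicting the hyperbolicity of $f$; and the remaining possibility that $a_n$ is bounded forces $f$ to have finite order, again contradicting hyperbolicity. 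This step is where the real work lies, and it is the main obstacle.

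Once $\tau(f)>0$, set $\lambda=\max(|f|_{A\cup B},\,1/\tau(f))$ and let $C$ be an additive constant absorbing the rate of convergence of $a_n/n$ to $\tau(f)$ and the obvious additive slack. Since $f^{-m}f^n=f^{n-m}$, one gets
\[
\tfrac{1}{\lambda}|n-m|-C \;\le\; |f^{-m}f^n|_{A\cup B} \;\le\; \lambda\,|n-m|,
\]
which is exactly the statement that $n\mapsto f^n$ is a $(\lambda,C)$-quasi-isometric embedding of $\Z$ into $\Cayley(G,A\cup B)$, i.e.\ a quasi-geodesic line.

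Finally, since $(G,\calP)$ is (at least) weakly relatively hyperbolic, the relative Cayley graph $\Cayley(G,A\cup B)$ is a Gromov-hyperbolic metric space. By the stability of quasi-geodesics in hyperbolic spaces (the Morse lemma), any bi-infinite quasi-geodesic fellow-travels a bi-infinite geodesic line, and hence has exactly two distinct limit points in the Gromov boundary $\partial\Cayley(G,A\cup B)$. Applying this to the quasi-geodesic line $n\mapsto f^n$ yields the two distinct ideal endpoints, which we denote $f^{-\infty}=\lim_{n\to\infty}f^{-n}$ and $f^{\infty}=\lim_{n\to\infty}f^n$, completing the proof.
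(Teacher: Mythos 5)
This statement is imported by the paper as a black box: it is stated as a Fact with an explicit citation to Osin (Corollary 4.20 of \cite{2006:Osin}), and no proof is given or intended. Your attempt to prove it from scratch is therefore not comparable to anything in the paper, and it has to be judged on its own terms --- where it has a genuine gap at exactly the point you flag yourself. The entire content of the statement is concentrated in the claim $\tau(f)>0$ (equivalently, that a hyperbolic element has linearly growing powers in the relative metric); this is Osin's Theorem 4.19, and your sketch of it does not constitute a proof. The assertion that BCP forces the relative geodesics $\sigma_n$ to be ``overwhelmingly concentrated inside a single peripheral coset'' is not justified: a priori the $P$-components of $\sigma_n$ may be distributed over many cosets of many peripheral subgroups with no single one carrying a definite fraction of the length, and ruling this out is precisely the hard analysis of components of geodesic polygons (or of the relative isoperimetric function) that Osin carries out. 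Likewise, your fallback ``if $a_n$ is bounded then $f$ has finite order'' is false as a general implication --- boundedness of $|f^n|_{A\cup B}$ only says $f$ is elliptic for the action on the relative Cayley graph, and concluding that such an element is finite-order or conjugates into a peripheral subgroup is again part of the classification you are trying to establish, so the argument is circular at that point.

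The surrounding scaffolding is fine: Fekete's lemma gives $|f^n|_{A\cup B}\ge \tau(f)\,n$ directly (the limit equals the infimum, so no additive error term is even needed on the lower bound), the upper bound $|f^n|_{A\cup B}\le n|f|_{A\cup B}$ is trivial, and the Morse lemma does hold in non-proper hyperbolic geodesic spaces, so the passage from ``quasi-geodesic line'' to ``exactly two boundary points'' is legitimate. But since the one nontrivial step is left as an acknowledged obstacle, the right course here --- and the one the paper takes --- is simply to cite \cite[Cor.\ 4.20]{2006:Osin} rather than reprove it.
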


\begin{fact}[Lemma 8.3 in \cite{2009:HruskaWise}]\label{fact: HW83 alt}
If $f\in H$ is hyperbolic, there exists a bi-infinite relative geodesic path $c$ whose endpoints in $\partial\Cayley(G,A\cup B)$ are $f^{-\infty}$ and $f^\infty$, and whose every vertex is at $A$-distance at most $k$ from $H$.
\end{fact}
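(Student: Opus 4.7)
My plan is to imitate the proof of Proposition~\ref{prop: hyperbolic BP_nu}, substituting ``hyperbolic'' for ``infinite'' throughout, and using Facts~\ref{fact: Osin420 alt} and~\ref{fact: HW83 alt} as the relatively hyperbolic replacements for the classical ingredients on bi-infinite geodesic lines tracking infinite-order elements. Fix relatively quasi-convex subgroups $H$ and $K$ with common constant $k$, and suppose $K, g_1H, \ldots, g_n H$ are pairwise distinct with $M = K \cap \bigcap_i H^{g_i}$ hyperbolic. Since a hyperbolic subgroup contains a hyperbolic element (recalled at the start of the section), pick such an element $f \in M$.

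By Fact~\ref{fact: HW83 alt} applied to $f \in K$, there is a bi-infinite relative geodesic $c$ with endpoints $f^{\pm\infty}$ in $\partial\Cayley(G, A\cup B)$, every vertex of which lies at $A$-distance at most $k$ from $K$. For each $i$, the element $h_i = g_i^{-1} f g_i$ lies in $H$ and is hyperbolic (the trichotomy is conjugation-invariant), with endpoints $h_i^{\pm\infty} = g_i^{-1} f^{\pm\infty}$ by the naturality of Fact~\ref{fact: Osin420 alt} under left-multiplication on $\partial\Cayley(G, A \cup B)$. Applying Fact~\ref{fact: HW83 alt} to $h_i \in H$ produces a bi-infinite relative geodesic $c_i$ with endpoints $h_i^{\pm\infty}$ whose vertices are within $A$-distance $k$ of $H$. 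Since left multiplication by $g_i$ is an isometry of $\Cayley(G, A \cup B)$ and preserves $A$-distance between points and cosets, the translate $g_i c_i$ is a bi-infinite relative geodesic with endpoints $f^{\pm\infty}$ whose vertices sit within $A$-distance $k$ of $g_i H$.

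The key step, and the main obstacle, is to show that any two bi-infinite relative geodesics sharing the same endpoints in $\partial\Cayley(G, A \cup B)$ fellow-travel at bounded $A$-distance, with a constant $\ell$ depending only on $(G, \calP, A)$. Since $\Cayley(G, A \cup B)$ is hyperbolic (weak relative hyperbolicity), $c$ and $g_i c_i$ already fellow-travel in the $(A \cup B)$-metric by the standard thin-triangles argument. To upgrade this to an $A$-distance bound, fix a vertex $z$ on $c$ and choose long finite sub-geodesics of $c$ and of $g_i c_i$ straddling $z$ whose endpoints are aligned to within bounded $A$-distance: since both geodesics converge to the same two ideal endpoints, such alignment is available for subpaths extending far enough toward $\pm\infty$ (one may have to prolong the shorter path by a short $A$-arc to make $G$-endpoints coincide). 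Proposition~\ref{prop: improved BCP}, applied to the resulting pair of relative $(1, O(1))$-quasi-geodesics with the same $G$-endpoints, then places $z$ within $A$-distance at most some $\ell = a(1, O(1))$ of a vertex of $g_i c_i$. Taking any vertex $z$ on $c$, the ball of $A$-radius $k + \ell$ centered at $z$ meets $K$ and every $g_i H$, which establishes Property $\rBP_\nu$ with $\nu(k) = k + \ell$.

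For the computability of $\ell$ when the word problem is solvable in each peripheral subgroup: the constants $\lambda$ and $\epsilon$ of Proposition~\ref{prop: Antolin-Ciobanu} and $a(\lambda, \epsilon)$ of Proposition~\ref{prop: improved BCP} are effectively computable, a hyperbolicity constant of $\Cayley(G, A \cup B)$ can be extracted via the algorithms cited in Proposition~\ref{prop: Antolin-Ciobanu} once the peripheral word problems are solvable, and the quasi-geodesicity constants in Fact~\ref{fact: Osin420 alt} as well as the constants implicit in Fact~\ref{fact: HW83 alt} depend effectively on these inputs. The fellow-traveling constant $\ell$ that emerges from the argument above is then itself a computable function of $k$ and of these effective data; since $k$ enters $\nu(k)$ only linearly, the additive constant $\ell$ is effectively computable, completing the proof.
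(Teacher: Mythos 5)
The statement you were asked to prove is Fact~\ref{fact: HW83 alt} itself: for a hyperbolic element $f$ of a relatively quasi-convex subgroup $H$ with constant $k$, there exists a bi-infinite relative geodesic $c$ with endpoints $f^{-\infty}$ and $f^{\infty}$ in $\partial\Cayley(G,A\cup B)$, every vertex of which is at $A$-distance at most $k$ from $H$. Your proposal never argues this. Instead it invokes the statement verbatim as a black box (``By Fact~\ref{fact: HW83 alt} applied to $f\in K$\dots'', and again for each $h_i\in H$) and then proceeds to prove a different result, namely Proposition~\ref{prop: packing hyperbolic alt} (Property $\rBP_\nu$ with $\nu(k)=k+\ell$). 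Relative to the target, this is circular: the two things that actually need justification --- why a bi-infinite \emph{relative geodesic} asymptotic to $\{f^n\}$ exists at all, and why its vertices stay within $A$-distance $k$ of $H$ (an $A$-metric bound, not merely a bound in the relative metric) --- are precisely what you assume.

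For the record, the paper gives no proof of this Fact either; it is quoted from Hruska and Wise \cite{2009:HruskaWise}, Lemma 8.3. A self-contained argument would start from Fact~\ref{fact: Osin420 alt}: $n\mapsto f^n$ is a quasi-geodesic line in the hyperbolic space $\Cayley(G,A\cup B)$ with two distinct endpoints at infinity, so one extracts $c$ as a limit of the finite relative geodesics $[f^{-m},f^m]$; each of these joins two elements of $H$, hence by the very definition of relative quasi-convexity all of its vertices lie at $A$-distance at most $k$ from $H$, and this bound passes to the limit. The extraction of the limiting bi-infinite geodesic requires care because $\Cayley(G,A\cup B)$ is not locally finite --- that is the technical content of Hruska and Wise's lemma, and it is entirely absent from your proposal. (Your fellow-traveling discussion via Proposition~\ref{prop: improved BCP} addresses a different step of the packing argument, the one the paper isolates as Fact~\ref{fact: HW82 alt}, not the statement at hand.)
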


\begin{fact}[Lemma 8.2 in \cite{2009:HruskaWise}]\label{fact: HW82 alt}
There exists a constant $\ell$, dependent on $G$, $A$ and $\calP$ only, such that if $c$ and $c'$ are relative geodesic lines with the same endpoints at infinity, then for every vertex $v$ of $c$, there exists a vertex $v'$ of $c'$ such that $\dist_A(v,v') \le \ell$.
\end{fact}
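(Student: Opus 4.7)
The plan is to run the proof of Proposition~\ref{prop: hyperbolic BP_nu} almost verbatim, with the three ingredients about hyperbolic groups (existence of infinite-order elements in infinite subgroups, bi-infinite geodesics through the endpoints of a cyclic subgroup lying near a quasi-convex subgroup, fellow-traveling of geodesic lines with common endpoints at infinity) replaced by the three stated facts Fact~\ref{fact: Osin420 alt}, Fact~\ref{fact: HW83 alt} and Fact~\ref{fact: HW82 alt} about relatively hyperbolic groups. The role of ``infinite order element'' in the intersection is played by a \emph{hyperbolic} element, which is exactly why the definition of Property $\rBP_\nu$ requires the intersection $K \cap \bigcap_i H^{g_i}$ to be hyperbolic rather than merely infinite: a hyperbolic subgroup contains a hyperbolic element by definition, whereas parabolic obstructions would break the fellow-traveling argument in $\Cayley(G,A\cup B)$.

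In detail, let $H,K$ be relatively quasi-convex with constant $k$, and let $g_1,\ldots,g_n$ be such that $K,g_1H,\ldots,g_nH$ are pairwise distinct and $K \cap \bigcap_i H^{g_i}$ is hyperbolic. Pick a hyperbolic element $f$ in this intersection. By Fact~\ref{fact: Osin420 alt} the powers of $f$ accumulate at two distinct points $f^{-\infty},f^\infty\in\partial\Cayley(G,A\cup B)$; by Fact~\ref{fact: HW83 alt} (applied to $f\in K$), there exists a bi-infinite relative geodesic $c$ from $f^{-\infty}$ to $f^\infty$ whose every vertex is at $A$-distance at most $k$ from $K$. For each $i$ the conjugate $g_i^{-1}fg_i\in H$ is hyperbolic with endpoints $g_i^{-1}f^{\pm\infty}$, and Fact~\ref{fact: HW83 alt} yields a relative geodesic $c_i$ through these endpoints, within $A$-distance $k$ of $H$. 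Translating on the left by $g_i$, the line $g_ic_i$ is a relative geodesic from $f^{-\infty}$ to $f^\infty$, within $A$-distance $k$ from $g_iH$. Apply Fact~\ref{fact: HW82 alt} to $c$ and $g_ic_i$: there is a constant $\ell$, depending only on $G$, $A$ and $\calP$, such that every vertex of $c$ is at $A$-distance at most $\ell$ from some vertex of $g_ic_i$. Choose any vertex $z$ of $c$. Then the ball around $z$ of radius $k$ meets $K$, and for every $i$ the ball around $z$ of radius $k+\ell$ meets $g_iH$, establishing Property $\rBP_\nu$ with $\nu(k)=k+\ell$.

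It remains to prove computability of $\ell$ when the word problem in each peripheral subgroup is solvable. This is the only step that requires genuine work, and it is where the main obstacle lies: the constant $\ell$ of Fact~\ref{fact: HW82 alt} is extracted from Hruska and Wise's Lemma~8.2 \cite{2009:HruskaWise}, whose proof goes through the thin-triangles constant of $\Cayley(G,A\cup B)$ and the BCP constant $a(\lambda,\epsilon)$ for a fixed pair $(\lambda,\epsilon)$ of relative quasi-geodesic parameters. My plan is to argue that each of these ingredients is effectively computable under the assumed hypothesis. The BCP constant $a(\lambda,\epsilon)$ is already known to be computable by Proposition~\ref{prop: improved BCP}. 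For the hyperbolicity constant of $\Cayley(G,A\cup B)$ and for the ability to compute with $\hat F$-reduced words, we use Dahmani's algorithmic framework for relatively hyperbolic groups \cite{2009:Dahmani}, which is effective as soon as one has solvability of the word problem in each peripheral subgroup (needed to decide whether a letter of $B$ represents the identity and to manipulate $\hat F$-reduced words). Tracking these constants through the proof of Hruska-Wise Lemma~8.2 yields an explicit computable bound for $\ell$, and hence for $\nu(k)=k+\ell$.
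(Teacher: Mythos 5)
You have proven the wrong statement. The statement at issue is Fact~\ref{fact: HW82 alt} itself: the existence of a constant $\ell$ such that any two relative geodesic lines with the same endpoints at infinity $\ell$-fellow-travel in the metric $\dist_A$. Your proposal instead reproduces the proof of Proposition~\ref{prop: packing hyperbolic alt} (Property $\rBP_\nu$ with $\nu(k)=k+\ell$), and in the course of that argument you explicitly \emph{apply} Fact~\ref{fact: HW82 alt} to the lines $c$ and $g_ic_i$ --- that is, you assume the very statement you were asked to prove. Nothing in the proposal establishes why two relative geodesics with common endpoints at infinity must stay $A$-close; in the relatively hyperbolic setting this is genuinely nontrivial (it rests on thin triangles in $\Cayley(G,A\cup B)$ together with the BCP property to convert closeness in the relative metric into closeness in $\dist_A$), and it is precisely the content of Hruska and Wise's Lemma~8.2. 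For the record, the paper does not prove this Fact either: it imports it verbatim from \cite{2009:HruskaWise}, so the honest ``proof'' here is the citation, possibly accompanied by a sketch of Hruska--Wise's argument; what one cannot do is derive the Fact from the Proposition that depends on it.

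Your closing paragraph on the computability of $\ell$ is the only part that engages with the Fact's actual content, and it is broadly consistent with the paper's remarks at the end of the proof of Proposition~\ref{prop: packing hyperbolic alt}: one traces $\ell$ through Hruska--Wise's proof to a constant coming from Osin, which is computable from the relative isoperimetric function, which in turn is computable by Dahmani when the peripheral word problems are solvable (the relevant reference there is \cite{2008:Dahmani} rather than \cite{2009:Dahmani}). But computability of $\ell$ is not part of the Fact as stated; the existence of $\ell$ is, and that existence is exactly what your proposal leaves unproven.
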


Turning to the effective computability of $\ell$, we note that the proof of  \cite[Lemma 8.2]{2009:HruskaWise} shows how to compute $\ell$ given a certain constant $\nu$ from \cite[Theorem 3.26]{2010:Hruska}. Then \cite[Theorem 3.26]{2006:Osin} shows that $\nu$ can be computed from the isoperimetric function of $\Cayley(G,A\cup B)$. And this function is computable by a result of Dahmani \cite[Thm 0.1]{2008:Dahmani}, under the hypothesis that we can solve the word problem in the peripheral subgroups.
\eop

We now derive decidability and computability results, analogous to Corollary~\ref{cor: good properties}.

\begin{cor}\label{cor: relative hyperbolic computability results}
Let $(G,\calP)$ be a relatively hyperbolic group such that $\calP$ consists of geodesically bi-automatic groups. There is a partial algorithm which, on input two tuples of reduced words generating subgroups $H$ and $K$ of $G$, respectively, 

\begin{itemize}
\item halts on a set of instances which includes the tuples such that $H$ and $K$ are relatively quasi-convex with peripherally finite index;

\item when it halts,
\begin{itemize}
\item[(1)] decides whether $H$ is elliptic, parabolic or hyperbolic;

\item[(2)] computes a family $\calJ$ as in Proposition~\ref{prop: rBP to double cosets} (that is: lists the subgroups in $\calJ$ and produces a finite set of generators for each of them);

\item[(3)] if $K$ is hyperbolic, decides whether $K$ is contained in a conjugate of $H$, or whether it is conjugated to $H$;

\item[(4)] computes $\hauteur_{G,\calP}(H)$;

\item[(5)] decides whether $H$ is relatively malnormal.
\end{itemize}
\end{itemize}
\end{cor}

\preuve
We start with applying the partial algorithms in Corollary~\ref{cor: peripherally finite index}. This allows in particular the computation of a constant $k$ of relative quasi-convexity for $H$ and $K$. 

To prove (1): We already saw that one can decide whether $H$ is finite, that is, whether $H$ is elliptic (Corollary~\ref{cor: peripherally finite index}). If it is not the case, then $H$ is parabolic if there exists $x\in G$ and $P\in \calP$ such that $H^x \le P$, and by Proposition~\ref{prop: maximal parabolic alt}, $x$ can be chosen such that $|x|_A\le k$.

For every word $x$ with $|x|_A\le k$ and every $P\in\calP$, we can decide whether $H^x \le P$ by Corollary~\ref{cor: peripherally finite index} again. Therefore we can decide whether a non-elliptic subgroup $H$ is parabolic. Finally, $H$ is hyperbolic if and only if it is neither elliptic nor parabolic.

The rest of the proofs is as in Proposition~\ref{prop: good properties}, using Corollary~\ref{cor: peripherally finite index} and Propositions~\ref{prop: rBP to double cosets} and~\ref{prop: packing hyperbolic alt}. Note that we can apply the latter since bi-automatic groups have solvable word problems.
\eop

We now turn to intersections of conjugates of relatively quasi-convex subgroups, that form parabolic subgroups of $G$, in order to complete Proposition~\ref{prop: rBP to double cosets} and Corollary~\ref{cor: relative hyperbolic computability results} (2) and (3). To do so, we introduce an additional assumption on $G$.

It is well-known, and elementary, that a non-cyclic free abelian group is not hyperbolic. As a result, if $P$ is a non-cyclic free abelian subgroup of $G$, then $P$ conjugates into some element of $\calP$. The group $G$ is said to be \emph{toral} if $G$ is torsion-free and the elements of $\calP$ are free non-cyclic abelian subgroups of $G$. Note that free abelian groups are geodesically bi-automatic.
We use the following result, due to Groves \cite[Cor. 5.7]{2005:Groves}, \cite[Lemma 6.9]{2009:Groves}.

\begin{prop}\label{prop: CSA alt}
If $(G,\calP)$ is a toral relatively hyperbolic group, then its maximal non-cyclic abelian subgroups are malnormal.
\end{prop}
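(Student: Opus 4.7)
The strategy is to reduce the statement to the almost malnormality of the peripheral family of a relatively hyperbolic group, which in the torsion-free setting becomes actual malnormality. The plan has two steps: first show that a maximal non-cyclic abelian subgroup is already a conjugate of some $P\in\calP$, and then run a short commutator computation.

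For the first step, let $A$ be a maximal non-cyclic abelian subgroup of $G$. Recall that in any relatively hyperbolic group the centralizer of a hyperbolic element is virtually cyclic, which in the toral setting forces it to be cyclic (standard result of Osin). Hence any abelian subgroup containing a hyperbolic element is cyclic, so every non-trivial element of $A$ is parabolic. Moreover, for any two non-trivial $a,b\in A$, the element $b$ commutes with $a$, and so lies in the unique maximal parabolic subgroup containing $a$ (uniqueness is a standard consequence of almost malnormality of $\calP$). Applying this across $A$, the whole subgroup $A$ lies in a single conjugate $P^t$ of a peripheral subgroup. Since $P^t$ is itself non-cyclic abelian, maximality of $A$ forces $A=P^t$.

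For the second step, suppose $g\in G$ and that $A\cap A^g$ contains a non-trivial element $x$. Writing $A=P^t=t^{-1}Pt$, set $y=txt^{-1}\in P$ and $c=tgt^{-1}$. The condition $x\in A^g=g^{-1}Ag$ rewrites, after conjugation by $t$, as $cyc^{-1}\in P$, so $y$ is a non-trivial element of $P\cap P^c$. The almost malnormality of the peripheral family (a direct consequence of the BCP property, cf.\ Osin or Bowditch) combined with the torsion-freeness of $G$ yields $P\cap P^c=1$ whenever $c\notin P$. Therefore $c\in P$, whence $g=t^{-1}ct\in t^{-1}Pt=A$, as required.

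The genuinely technical ingredients are two standard facts about toral relatively hyperbolic groups: (i) every non-cyclic abelian subgroup lies in a conjugate of a peripheral, and (ii) the peripheral subgroups are almost malnormal. Once these are invoked, the argument reduces to the algebraic manipulation above. The main obstacle I anticipate is therefore not mathematical but bibliographical: pinning down precise statements and references (Osin, Bowditch, Groves) so that the short proof above is fully rigorous, particularly for step (i), which is not quoted verbatim in the excerpt (only the weaker statement for \emph{free} non-cyclic abelian subgroups is mentioned).
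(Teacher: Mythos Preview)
Your argument is correct. Note, however, that the paper does not actually prove this proposition: it simply records it as a result of Groves, with references to \cite{2005:Groves} and \cite{2009:Groves}. So there is no ``paper's own proof'' to compare against beyond a citation.

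What you have written is a genuine, self-contained proof (modulo the two standard relatively-hyperbolic facts you flag). A few remarks:

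\begin{itemize}
\item Your step (i) is in fact fully handled by your own reasoning; the worry you express at the end is unnecessary. You do not rely on the paper's remark that non-cyclic free abelian subgroups conjugate into $\calP$. Instead you argue directly: a hyperbolic element has virtually cyclic centralizer, hence cyclic centralizer in a torsion-free group, so every nontrivial element of a non-cyclic abelian $A$ is parabolic; then almost malnormality of $\calP$ pins all of $A$ inside a single conjugate $P^t$, and maximality gives $A=P^t$. This works for arbitrary (not necessarily finitely generated) abelian $A$.
\item The commutator/conjugation bookkeeping in step (ii) is fine and matches the convention $H^g=g^{-1}Hg$ used in the paper.
\item The references you would want are Osin \cite{2006:Osin} for the virtually cyclic centralizer of hyperbolic elements (his elementary subgroup results) and for almost malnormality of the peripheral collection; Bowditch \cite{2012:Bowditch} is an alternative source for the latter.
\end{itemize}

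In short: your route is more elementary and more explicit than the paper's treatment, which outsources the statement entirely to Groves.
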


With the additional hypothesis of toral relative hyperbolicity, we prove the following result.

\begin{prop}\label{prop: parabolic subgroup case alt}
Let $(G,\calP)$ be a toral relatively hyperbolic group and let $H, K$ be relatively quasi-convex subgroups. There exists a finite family $\calJ$ of subgroups of $K$ such that, for every $g\in G$ such that $K\cap H^g$ is parabolic, $K\cap H^g$ is conjugated in $K$ to an element of $\calJ$.

If $k$ is a constant of relative quasi-convexity for $H$ and $K$, then $\calJ$ can be taken to be the family of parabolic subgroups of the form $K\cap H^g$ with $|g|_A \le 2k$.
\end{prop}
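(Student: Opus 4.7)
The plan is to apply Proposition~\ref{prop: maximal parabolic alt} to both $H$ and $K$, and then use malnormality of the peripheral subgroups (Proposition~\ref{prop: CSA alt}) together with the commutativity of $\calP$ in the toral setting to pin down the $K$-conjugacy class of each parabolic intersection $K \cap H^g$. Fix a common relative quasi-convexity constant $k$ for $H$ and $K$. Proposition~\ref{prop: maximal parabolic alt} gives, for each $P \in \calP$, finite lists $(x_{P,i})_i$ and $(y_{P,j})_j$ of elements of $A$-length at most $k$ such that any infinite intersection $K \cap P^z$ is $K$-conjugate to some $K \cap P^{x_{P,i}}$ and any infinite $H \cap P^z$ is $H$-conjugate to some $H \cap P^{y_{P,j}}$.

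Let $L = K \cap H^g$ be parabolic. Since $G$ is torsion-free and the peripheral structure of a relatively hyperbolic group is almost malnormal, $L$ is contained in a unique maximal parabolic subgroup, of the form $P^x$ with $P \in \calP$. Then $L \le K \cap P^x$ is infinite, so there exists $c \in K$ with $K \cap P^x = (K \cap P^{x_{P,i}})^c$; in particular $L^{c^{-1}} \le K \cap P^{x_{P,i}}$, and since $c \in K$ we have $L^{c^{-1}} = K \cap H^{gc^{-1}}$. The subgroup $H^{gc^{-1}} \cap P^{x_{P,i}}$ contains $L^{c^{-1}}$, hence is infinite; conjugating by $cg^{-1}$ yields $H \cap P^{x_{P,i} c g^{-1}}$, which is $H$-conjugate to $H \cap P^{y_{P,j}}$ for some $j$. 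Reversing, $H^{gc^{-1}} \cap P^{x_{P,i}} = (H \cap P^{y_{P,j}})^h$ for some $h \in G$, and this non-trivial subgroup is contained in $P^{x_{P,i}} \cap P^{y_{P,j} h}$.

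At this point the toral hypothesis becomes crucial: $P$ is a maximal non-cyclic abelian subgroup, hence malnormal in $G$ by Proposition~\ref{prop: CSA alt}. The non-triviality of $P^{x_{P,i}} \cap P^{y_{P,j}h}$ therefore forces $h = y_{P,j}^{-1}\, q\, x_{P,i}$ for some $q \in P$. The inner conjugate
\[
(H \cap P^{y_{P,j}})^{y_{P,j}^{-1}} \;=\; H^{y_{P,j}^{-1}} \cap P
\]
lies entirely in the abelian group $P$ and is thus fixed by conjugation by $q$. Pushing through the remaining $x_{P,i}$, we conclude
\[
L^{c^{-1}} \;=\; K \cap H^{gc^{-1}} \cap P^{x_{P,i}} \;=\; K \cap H^{y_{P,j}^{-1} x_{P,i}} \cap P^{x_{P,i}},
\]
a subgroup that depends only on the finite data $(P,i,j)$. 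The finite family
\[
\calJ \;=\; \{\, K \cap H^{y_{P,j}^{-1} x_{P,i}} \cap P^{x_{P,i}} \;:\; P \in \calP,\ i, j \,\}
\]
therefore satisfies the requirement.

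The main obstacle, and the point where the argument is delicate, is the absorption of the arbitrary element $q \in P$ into a single conjugator of bounded length. This needs both ingredients of the toral setting working in tandem: malnormality of $P$ (to locate $h$ exactly in a double coset $P_{y_{P,j}^{-1}} P\, x_{P,i}$) and commutativity of $P$ (to erase $q$ from the conjugation of a subgroup of $P$). For a general peripheral structure one would only conclude that $L^{c^{-1}}$ lies in one of finitely many $K \cap P^{x_{P,i}}$, and further quotienting by normalizer-type actions inside $K$ would be required to recover finiteness of $K$-conjugacy classes.
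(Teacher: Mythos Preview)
Your proof is correct and follows essentially the same route as the paper: apply Proposition~\ref{prop: maximal parabolic alt} on both the $K$-side and the $H$-side to reduce to conjugators of length at most $k$, use malnormality of $P$ (Proposition~\ref{prop: CSA alt}) to force the residual conjugator into $y^{-1}Px$, and then use commutativity of $P$ to erase the $P$-part, landing on the finite family $\{K \cap P^{x} \cap H^{y^{-1}x} : |x|_A,|y|_A \le k\}$. The only cosmetic difference is that the paper invokes malnormality twice (first to identify $P^z = P^{xx'}$, then to locate $g$ in $H\,y^{-1}Px\,K$), whereas you bypass the first use by carrying the containment $L^{c^{-1}} \le P^{x_{P,i}}$ directly; this is a mild streamlining, not a different method.
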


\preuve
Let $k$ be a constant of relative quasi-convexity for $H$ and $K$.  Let $g\in G$ be such that $K \cap H^g$ is parabolic, that is, $K \cap H^g$ is infinite and contained in some $P^z$ ($P\in \calP$ and $z\in G$). Then in particular $K \cap P^z$ is infinite and by Proposition~\ref{prop: maximal parabolic alt}, there exist $x\in G$ and $x'\in K$ such that $|x|_A \le k$ and $K \cap P^z = (K \cap P^x)^{x'} = K \cap P^{xx'}$. It follows that $P^z \cap P^{xx'}$ is infinite and by Proposition~\ref{prop: CSA alt}, we have $P^z = P^{xx'}$.

The intersection $H^g \cap P^z = H^g \cap P^{xx'}$ is also infinite, and so is its conjugate $H \cap P^{xx'g\inv}$. By Proposition~\ref{prop: maximal parabolic alt} and Proposition~\ref{prop: CSA alt} again, there exist $y\in G$ and $y'\in H$ such that $|y|_A \le k$ and $P^{xx'g\inv} = P^{yy'}$. It follows that $yy'g{x'}\inv x\inv \in P$, that is: $g = {y'}\inv y\inv pxx'$ for some $p \in P$.

Now we have $K \cap H^g \subseteq P^{xx'}$, that is, $K \cap H^{y\inv pxx'} \subseteq P^{xx'}$. Conjugating by $(xx')\inv$, we find that $K^{x\inv} \cap H^{y\inv p} \subseteq P$. Let $s \in K^{x\inv} \cap H^{y\inv p}$. Then $s \in P$, which is an abelian group, so $s = s^{p\inv}$. It follows that $K^{x\inv} \cap H^{y\inv p} = K^{x\inv} \cap H^y \cap P$, and conjugating back by $xx'$, we get
\begin{align*}
K \cap H^g &= K \cap H^{y\inv xx'} \cap P^{xx'} \\
&= K^{x'} \cap H^{y\inv xx'} \cap P^{xx'} \\
&= (K \cap P^x \cap H^{y\inv x})^{x'}.
\end{align*}
So we can take the set $\calJ$ to be the set of all intersections of the form $K \cap P^x \cap H^{y\inv x}$ which are infinite, with $|x|_A, |y|_A \le k$.
\eop

\begin{cor}\label{cor: parabolic subgroup case alt}
Let $(G,\calP)$ be a toral relatively hyperbolic group. There is a partial algorithm which, given tuples generating subgroups $H$ and $K$ of $G$,
\begin{itemize}
\item halts on a set of instances which includes the tuples such that $H$ and $K$ are relatively quasi-convex with peripherally finite index;

\item when it halts, computes a family $\calJ$ as in Proposition~\ref{prop: parabolic subgroup case alt}.
\end{itemize}
\end{cor}

\preuve
The partial algorithms in Corollary~\ref{cor: peripherally finite index} allow us to compute a constant of relative quasi-convexity $k$ for $H$ and $K$. Then $\calJ$ can be taken to be the parabolic subgroups of the form $K\cap H^g$ where $|g|_A \le 2k$ by Proposition~\ref{prop: parabolic subgroup case alt}. Each intersection $K\cap H^g$, $|g|_A \le 2k$, can be computed and tested for parabolicity by Corollary~\ref{cor: peripherally finite index}.
\eop

Proposition~\ref{prop: rBP to double cosets} and Corollary~\ref{cor: parabolic subgroup case alt} yield the following corollaries.

\begin{cor}\label{cor: relatively hyperbolic alt}
Let $(G,\calP)$ be a toral relatively hyperbolic group.

For all relatively quasi-convex subgroups $H$ and $K$ of $G$, there exists a finite family $\calJ$ of subgroups of $K$ such that every intersection of the form $K\cap H^g$ is either finite, or conjugated in $K$ to an element of $\calJ$.

There is a partial algorithm which, on input two finite tuples generating subgroups $H$ and $K$ of $G$,
\begin{itemize}
\item halts on a set of instances which includes the tuples such that $H$ and $K$ are relatively quasi-convex with peripherally finite index;

\item when it halts, computes such a family $\calJ$.
\end{itemize}
\end{cor}

\begin{cor}\label{cor: deciding stuff alt}
Let $G$ be a toral relatively hyperbolic group. There is a partial algorithm which, on input two finite tuples generating subgroups $H$ and $K$ of $G$,
\begin{itemize}
\item halts on a set of instances which includes the tuples such that $H$ and $K$ are relatively quasi-convex with peripherally finite index;

\item when it halts, decides whether $K$ is infinite, and in that case decides whether $K$ is contained in (resp. equal to) a conjugate of $H$.
\end{itemize}
\end{cor}

\preuve
Note that if $K$ is infinite, then $K$ is contained in a conjugate of $H$ if $K = K\cap H^g$ for some $g$, if and only if $K$ belongs to the family $\calJ$ described in Corollary~\ref{cor: relatively hyperbolic alt}.

The partial algorithm in Corollary~\ref{cor: peripherally finite index}, with the announced halting properties, allows us to decide whether $K$ is infinite. If it is the case, Corollary~\ref{cor: relatively hyperbolic alt} provides a partial algorithm, again with the appropriate halting properties, which outputs (when it halts) the family $\calJ$. Finally, Corollary~\ref{cor: peripherally finite index} shows how to decide whether $K = J$ for every $J\in \calJ$. 
\eop

Corollaries~\ref{cor: relatively hyperbolic alt} and~\ref{cor: deciding stuff alt} can be compared to the following result due to Kharlampovich \emph{et al.} \cite{2004:KharlampovichMyasnikovRemeslennikov} about limit groups, which does not require the subgroups to have peripherally finite index. Notice that limit groups are particular toral relatively hyperbolic, which are locally relatively quasi-convex.


\begin{theorem} \label{int}
Let $G$ be a limit group and let $H$ and $K$ finitely generated subgroups of $G$ given by finite generating sets.  Then one can effectively compute a finite family $\calJ$ of non-trivial finitely generated subgroups of $G$ (each given by finite generating sets), such that
\begin{itemize}
\item every element of $\calJ$ is of one of the following types
$$H^{g_1} \cap K,\qquad H^{g_1} \cap C_K(g_2)$$
where $g_1 \in G \smallsetminus H$, $g_2 \in K$, $C_K(g_2)$ is the centralizer of $g_2$ in $K$, and $g_1$ and $g_2$ can be computed;
\item for any non-trivial intersection $H^g \cap K,\ g \in G \smallsetminus H$ is equal to a subgroup of the form $J^x$, where $J\in \calJ$ and $x\in K$ can be computed.
\end{itemize}
In  particular, one can decide whether $H$ and $K$ are conjugated.
\end{theorem}

\paragraph{Acknowledgements}
The authors thank the anonymous referee whose questions helped us make the statements of some of our results more precise and rigorous.

{\small

%
%
%
}

\end{document}